\newcommand*{\faktorDisplay}[2]{%
  \raisebox{0.5\height}{\ensuremath{#1}}
  \mkern-5mu\diagup\mkern-4mu
  \raisebox{-0.5\height}{\ensuremath{#2}}
}
\newcommand{\faktor}[2]{#1/#2}
\definecolor{USred}{cmyk}{0,1.00,0.65,0.34}
\definecolor{USblue}{cmyk}{1.00,0.65,0,0.34}
\definecolor{USyellow}{cmyk}{0,0.14,0.81,0.1}
\definecolor{USbrown}{cmyk}{0,0.45,0.77,0.29}
\definecolor{USgreen}{cmyk}{0.84,0,1,0.55}
\theoremstyle{definition}
\newtheorem{de}{Definition}[section]
\theoremstyle{plain}
\newtheorem{thm}[de]{Theorem}
\newtheorem{mainthm}{Theorem}
\newtheorem*{thm*}{Theorem}
\newtheorem{lem}[de]{Lemma}
\newtheorem{p}[de]{Proposition}
\newtheorem{cor}[de]{Corollary}
\theoremstyle{definition}
\theoremstyle{plain}
\newcommand\Stretch[1]{\scalebox{.7}[1]{#1}}
\title{Principal minors of the distance matrix of a tree}
\author[\'A.~Guti\'errez]{Álvaro Gutiérrez}
\address{\textsuperscript{1} School of Mathematics, University of Bristol, United Kingdom.}
\email{a.gutierrezcaceres@bristol.ac.uk}
\author[A.~Lillo]{Adrián Lillo}
\address{\textsuperscript{2} Departamento de Álgebra, Facultad de Matemáticas, Universidad de Sevilla, Spain.}
\email{alillo@us.es}
\date{1st July, 2024}
\begin{document}

\begin{abstract}
Let $T = ([n], E)$ be a tree and let $D = ( d(i,j) )_{i, j \le n}$ be the distance matrix of $T$. Let $S\subseteq [n]$. We give the first combinatorial proof for a formula to compute the principal minor of $D$ indexed by $S$, namely $\det D[S]$. This generalizes work of Graham and Pollak, as well as more recent works.
\end{abstract}

\maketitle
\begin{center}
    \itshape
As presented in the 30th British Combinatorial Conference
\end{center}

\setcounter{secnumdepth}{1}
\setcounter{tocdepth}{1}

\section{Introduction}
Consider a tree $T = ([n], E)$ with $n$ vertices. The distance between two vertices $i$ and $j$, denoted by $d(i,j)$, is defined as the number of edges in the unique path $P(i,j)$ from $i$ to $j$ in $T$. Let $D = \big(d(i,j)\big)_{1\le i,j\le n}$ be the distance matrix of $T$. Graham and Pollak \cite{GrahamPollak} gave in 1971 the following formula for the determinant of the distance matrix of a tree:
\[
\det D = (-1)^{n-1}(n-1)2^{n-2}.
\]
Of particular significance, this formula only depends on the number of vertices of $T$, and not on its structure.

There exist several elementary proofs of Graham and Pollak's formula \cite{GrahamPollak, GrahamLovasz:1978:Orsay, YanYeh:2007, TilliaThesis,  ZhouDing, DuYeh}, as well as many generalizations to weighted trees
\cite{BapatKirklandNeumann, BapatLalPati2009, ZhouDing} and $q$-analogues of the above \cite{BapatLalPati, YanYeh:2007, LSZ}.
So far, all these generalized notions of distance are \emph{additive}, in the sense that the distance from $i$ to $j$ can be expressed as the sum of certain polynomials depending on the intermediate paths $P(k,j)$ as $k$ ranges over the nodes of $P(i,j)$. Multiplicative generalizations of distances were explored in \cite{YanYeh,ZDmult}. 
Recently, a very general framework was develop by Choudhury and Khare in \cite{CK19}, which gives a formula that specializes to all of the above. (Also for general graphs, see \cite{CK23}.) Moreover, the formula also specializes to compute \emph{some} principal and non-principal minors of the matrix. A similar but different expression to compute \emph{all} principal minors of the matrix was very recently given by Richman, Shokrieh, and Wu \cite{richmanminors}. A multiplicative distance analogue is \cite{Hirai}.

The first involutive proof of Graham and Pollak's formula was given by Briand, Esquivias, Rosas and the authors of the present paper in \cite{FPSAC, BEGLR}. Furthermore, the determinant of all of the additive distance matrices generalized above follow easily from the bijections. The main idea is to reduce the problem to a path enumeration problem, and to invoke the Lindström--Gessel--Viennot Lemma \cite{Lindstrom, Gessel-Viennot} to conclude.

In the present paper, we give a new formula for the principal minors of the distance matrix of a tree.
Our proof is bijective and based on \cite{BEGLR}. Our methods can be used to deduce formulas given in \cite{CK19, richmanminors}. 

\subsection{Outline of paper}
We begin by fixing graph theoretic notation and stating our main results in Section \ref{sec:statement}.
We turn the problem into an enumeration of combinatorial objects coined catalysts in Section \ref{sec: main combinatorial objects}, after which we are ready to give an outline of the proof of Theorem \ref{thm:main result}. The details of the proof are postponed to Propositions \ref{prop:zero-sum}, \ref{prop:unital}, \ref{prop:composite}. We devote Section \ref{sec:zero-sum} to the proof of Proposition \ref{prop:zero-sum}. Sections \ref{sec:quotient} and \ref{sec:route maps} reduce the remainder of problem even further to a path enumeration problem. In order to do this, several combinatorial constructions have to be careful examined. Much of this is delegated to \cite{BEGLR} and \cite{derangements}, where the analysis takes up the bulk of the works.
We show Proposition \ref{prop:unital} in Section \ref{sec:unital} and Proposition \ref{prop:composite} in Section \ref{sec:composite}.
We conclude by deducing \cite{richmanminors} in Section \ref{sec: richman as coro}, and a Corollary of \cite{CK19} in Section \ref{sec: CK as coro} and giving some closing remarks.

\section{Statement of the main theorem}
\label{sec:statement}
If $X$ is a finite set, we denote as $\mathbb S_X$ the set of permutations of $X$. When $X = [k]$ for some integer $k \geq 0$, we simplify the notation to $\mathbb S_k$, as is customary.

Throughout the paper, fix a tree $T = ([n], E)$ with $n$ vertices, and consider a subset $S \subseteq [n]$ of $m\geq 2$ vertices. We let $E^\pm$ be the set of all possible oriented arcs supported on $T$. That is, for each $\{i,j\}\in E$, we have $(i,j)\in E^\pm$ and $(j,i)\in E^\pm$. 

Given a simple graph $G$, we let $\edge{G}$ be the number of edges of $G$ and $\cc{G}$ be the number of connected components of $G$. For instance, $\edge{T} = n-1$ and $\cc{T} = 1$.
A subgraph of a simple graph $G = (V,E)$ is a graph $H = (V',E')$ where $V'\subseteq V$ and $E'\subseteq E$.
We write $H\subseteq G$.
For subgraphs $H\subseteq T$ of a tree, we have $\cc{H} = n - \edge{H}$.
A \emph{spanning forest} of a graph $G$ is an acyclic subgraph containing every vertex of $G$. The union of two simple graphs $G = (V,E)$ and $G' = (V',E')$ is the simple graph $G\cup G' = (V\cup V', E\cup E')$.

We borrow from \cite{richmanminors} the following concepts: 
\begin{de}
\label{de: S-rooted}
    Let $T = ([n],E)$ be a tree, let $S\subseteq[n]$ be a subset of cardinality $m$.
    A spanning forest $F$ of $T$ is said to be \emph{$S$-rooted} if it has $m$ connected components and each of its connected components has exactly one vertex in $S$. See Figure \ref{fig: tree and forests C}.
    We write $F = \bigsqcup_{s\in S} F_s$, where $F_s$ is the component of $F$ containing $s$. 
    We denote the set of $S$-rooted spanning forests of $T$ by $\Srooted$.
    
    Similarly, a spanning forest $F$ is said to be \emph{($S,*$)-rooted} if it has $m + 1$ connected components, and exactly one of them does not contain any vertex of $S$. See Figure \ref{fig: tree and forests D}. We write $F = F_*\sqcup\bigsqcup_{s\in S} F_s$, where $F_s$ is the component of $F$ containing $s$.  We call $F_*$ the \emph{floating component of $F$}. The set of $(S, *)$-rooted spanning forests of $T$ is denoted  $\SSrooted$.
\end{de}

\begin{figure}[h]
    \centering\
        \begin{subfigure}{0.22\textwidth}
        \includegraphics[page = 1]{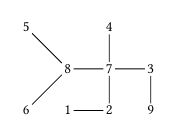}
        \subcaption{A tree $T$.}
        \label{fig: tree and forests A}
    \end{subfigure}\hfill
    \begin{subfigure}{0.22\textwidth}
        \includegraphics[page = 2]{Figures/TreeAndForest.pdf}
        \subcaption{A set $S$.}
        \label{fig: tree and forests B}
    \end{subfigure}\hfill
    \begin{subfigure}{0.22\textwidth}
        \includegraphics[page = 3]{Figures/TreeAndForest.pdf}
        \subcaption{$S$-rooted forest.}
        \label{fig: tree and forests C}
    \end{subfigure}\hfill
    \begin{subfigure}{0.22\textwidth}
        \includegraphics[page = 4]{Figures/TreeAndForest.pdf}
        \subcaption{$(S,*)$-rooted forest.}
        \label{fig: tree and forests D}
    \end{subfigure}
    \caption{}
    \label{fig: tree and forests}
\end{figure}

Let $G$ be a simple graph, let $F = \bigsqcup_{i \in I} F_i$ be a spanning forest of $G$, with $F_i = (V_i, E_i)$. The \emph{boundary of a component $F_{i_0}$}, denoted $\partial F_{i_0}$, is the set of nodes of $G$ adjacent to $F_{i_0}$ in $\bigsqcup_{i \in I, i\ne i_0} F_i$. That is,
\[
\partial F_{i_0} = \{u \in V - V_{i_0}\ : \ \exists \{u,v\} \in E,~ v\in V_{i_0}\}.
\]
The \emph{boundary degree} of $F_{i_0}$ is defined as the number of such nodes, 
\[
\bdeg{F_{i_0}} := \# \partial F_{i_0}.
\]
Similarly, given a vertex $x$ of the tree, we write $\partial x$ for the set of its neighbors and $\delta(x) = \# \partial x$ for its degree. (Since the discrete graph supported on $[n]$ is a spanning forest, the notation is consistent with the above.)

\begin{mainthm}
\label{thm:main result}
Let $T = ([n], E)$ be a tree and let $D$ be its distance matrix. Let $S \subseteq [n]$ be a subset of cardinality $m\ge2$. Then
    \[
\det D[S] = (-1)^{m-1}2^{m-2}\left((m - 1)\kappaTS - \!\!\!\!\sum_{F \in \SSrooted} \!\!\!\!\big(\bdeg{F_*} - 1\big)\big(\bdeg{F_*} - 4\big)\right).
\]
\end{mainthm}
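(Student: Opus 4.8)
The plan is to run the sign-reversing-involution strategy that \cite{BEGLR} uses for $\det D$, now localized to the rows and columns indexed by $S$. First I would expand $\det D[S]=\sum_{\sigma\in\mathbb S_S}\operatorname{sgn}(\sigma)\prod_{s\in S}d(s,\sigma(s))$ and rewrite each distance as an arc count: $d(i,j)$ equals the number of arcs $a=(u,v)\in E^\pm$ such that deleting $\{u,v\}$ from $T$ leaves $i$ (and $u$) on one side and $j$ (and $v$) on the other, i.e.\ the arcs lying on $P(i,j)$ and pointing toward $j$. Distributing the product expresses $\det D[S]$ as a signed enumeration of \emph{configurations}: a permutation $\sigma\in\mathbb S_S$ together with a choice, for each $s\in S$, of one arc $a_s\in E^\pm$ on $P(s,\sigma(s))$ oriented toward $\sigma(s)$, each weighted by $\operatorname{sgn}(\sigma)$. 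This is the raw material for the catalysts of Section~\ref{sec: main combinatorial objects}.

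Next comes the first cancellation (Proposition~\ref{prop:zero-sum}): a sign-reversing involution on configurations that pairs up and kills everything except a structured subset. The rule is the usual one --- whenever a configuration admits a local modification that flips $\operatorname{sgn}(\sigma)$ while still yielding a valid configuration (a transposition of two values of $\sigma$ together with a forced re-routing of the two affected arcs), one applies the canonical such move; the fixed points are the true catalysts. One then checks that a catalyst determines, and is essentially determined by, a spanning forest $F$ of $T$ cut out by its arc/path data, and that $F$ must be either $S$-rooted or $(S,*)$-rooted. It remains to compute, for each such $F$, the signed number of catalysts lying over it.

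Fixing $F$, the quotient construction of Section~\ref{sec:quotient} contracts each component of $F$ to a point, turning the residual catalyst into a derangement-type system of paths on the smaller tree, decorated by the \emph{route maps} of Section~\ref{sec:route maps} that record how the original arcs traverse each component. Invoking the Lindström--Gessel--Viennot lemma on the quotient --- exactly as in \cite{BEGLR}, with the fine analysis of \cite{derangements} --- evaluates the signed path sums, and the route-map bookkeeping produces the powers of $2$. For $F\in\Srooted$ the quotient has $m$ vertices and one recovers the Graham--Pollak value $(-1)^{m-1}2^{m-2}(m-1)$ per forest (Proposition~\ref{prop:unital}), giving the main term after summing over the $\kappaTS$ of them; for $F\in\SSrooted$ the floating component $F_*$ carries a defect, and tracking the ways arcs may enter and leave $F_*$ through its $\bdeg{F_*}$ boundary edges yields a contribution quadratic in $\bdeg{F_*}$, which one verifies equals $-(-1)^{m-1}2^{m-2}(\bdeg{F_*}-1)(\bdeg{F_*}-4)$ (Proposition~\ref{prop:composite}). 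Adding the two families gives the claimed formula; as a sanity check, $S=[n]$ forces $\kappaTS=1$ and $\SSrooted=\varnothing$, recovering $\det D=(-1)^{n-1}2^{n-2}(n-1)$.

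The main obstacle is precisely this last reduction: proving that the surviving catalysts are organized by $S$-rooted and $(S,*)$-rooted forests, and then pinning down the exact polynomial in $\bdeg{F_*}$ --- in particular accounting for why the roots are $1$ and $4$. This requires a careful case analysis of how a catalyst's arcs sit inside the floating component versus the rooted components, which is where the quotient/route-map machinery and the bulk of the work delegated to \cite{BEGLR, derangements} is spent; the earlier steps are bookkeeping by comparison.
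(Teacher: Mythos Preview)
Your outline is correct and matches the paper's proof: expand $\det D[S]$ over $S$-catalysts, group them by arrowflow, cancel the zero-sum classes via the involution of Proposition~\ref{prop:zero-sum}, and evaluate the unital and composite contributions through the quotient and route-map machinery of Sections~\ref{sec:quotient}--\ref{sec:composite}. One small clarification: the organizing invariant is the \emph{arrowflow} (the multiset $\{\!\{f(s):s\in S\}\!\}$), not the catalyst itself---it is the arrowflow's missing forest that is $S$-rooted or $(S,*)$-rooted (Lemma~\ref{lem: arrowflow partition}), and the involution acts within each zero-sum arrowflow class with no fixed points there, rather than being a global involution on all configurations whose fixed points are the ``true catalysts''.
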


Compare this with the formula given in \cite{richmanminors} (see Theorem \ref{thm:richman main result}). As a sanity check: if $S = [n]$ then $\#\Srooted = 1$ and $\#\SSrooted = 0$, and our formula specializes to the Graham--Pollak formula.

\section{\texorpdfstring{$S$}{S}-catalysts and \texorpdfstring{$m$}{m}-arrowflows}
\label{sec: main combinatorial objects}
\subsection{Catalysts}
\label{subsec: catalysts}
\begin{de}
    Given a permutation $\sigma$ in $\mathbb S_S$ and a map $f: S \to E^\pm$, we say that the ordered pair $(\sigma, f)$ is an \emph{$S$-catalyst for $T$} if, for each vertex $i$ of $S$, the arc $f(i)$ lies in the path from $i$ to $\sigma(i)$, and is oriented according to this path.
    (Note that the endpoints of $f(i)$ do not necessarily belong to $S$.)
    We denote by $K_S$ the set of all $S$-catalysts for $T$.
\end{de}
In particular, when $S = [n]$, an $S$-catalyst for $T$ is a catalyst for $T$ in the sense of \cite{BEGLR}. Catalysts were introduced in \cite{BEGLR} as the natural object for which $\det D$ is a signed enumeration. Indeed, since there are $d(i,\sigma(i))$ edges in the path from $i$ to $\sigma(i)$, then
\begin{equation}
\label{eqn:sum of catalysts}
  \det D[S] = \sum_{\sigma\in\mathbb S_S} \sgn(\sigma) ~ d(s_1,\sigma(s_1)) \cdots d(s_m,\sigma(s_m)) = \sum_{(\sigma, f) \in K_S} \sgn(\sigma).
\end{equation}
\begin{ej}
    Let $T$ and $S$ be the tree and set of Figure \ref{fig: tree and forests}. Figure \ref{fig: catalysts} showcases four $S$-catalysts for $T$. We can represent them as follows:
    \[\scriptsize
    \def\arraycolsep{0.2em}
    \renewcommand{\arraystretch}{.7}
    \left(\begin{array}{rccccc}
    & 1 & 3 & 4 & 5 & 6\\
    \sigma & 5 & 6 & 3 & 4 & 1\\
    f & 85 & 37 & 47 & 87 & 87
    \end{array}\right),
    \left(\begin{array}{rccccc}
    & 1 & 3 & 4 & 5 & 6\\
    \sigma & 3 & 5 & 6 & 4 & 1\\
    f & 27 & 37 & 47 & 87 & 21
    \end{array}\right),
    \left(\begin{array}{rccccc}
    & 1 & 3 & 4 & 5 & 6\\
    \sigma & 3 & 4 & 5 & 6 & 1\\
    f & 73 & 37 & 47 & 86 & 87
    \end{array}\right),
    \left(\begin{array}{rccccc}
    & 1 & 3 & 4 & 5 & 6\\
    \sigma & 3 & 4 & 1 & 6 & 5\\
    f & 73 & 74 & 21 & 58 & 68
    \end{array}\right).
    \]
    For instance, for the $S$-catalyst $(\sigma,f)$ of Figure \ref{fig: catalysts a}, $\sigma(1) = 5$ and $f(1) = (8,5)$. This is illustrated with a path from $1$ to $5$ with a mark at $(8,5)$.
\end{ej}
\begin{figure}[h]
    \centering
    \begin{subfigure}{0.22\textwidth}
        \includegraphics[page = 1]{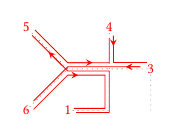}
        \subcaption{}\label{fig: catalysts a}
    \end{subfigure}\hfill
    \begin{subfigure}{0.22\textwidth}
        \includegraphics[page = 2]{Figures/catalysts.pdf}
        \subcaption{}\label{fig: catalysts b}
    \end{subfigure}\hfill
    \begin{subfigure}{0.22\textwidth}
        \includegraphics[page = 3]{Figures/catalysts.pdf}
        \subcaption{}\label{fig: catalysts c}
    \end{subfigure}\hfill
    \begin{subfigure}{0.22\textwidth}
        \includegraphics[page = 4]{Figures/catalysts.pdf}
        \subcaption{}\label{fig: catalysts d}
    \end{subfigure}
    \caption{Four $S$-catalysts for $T$.}
    \label{fig: catalysts}
\end{figure}

\subsection{Arrowflows}
\label{subsec: arrowflows}
\begin{de}
    Let $k$ be a nonnegative integer. We define a \emph{$k$-arrowflow on $T$} to be a directed multigraph with vertex set $[n]$, with $k$ arcs (counted with multiplicity), and whose underlying simple graph is a subgraph of $T$. 
\end{de}
\begin{figure}[h]
    \centering
    \begin{subfigure}{0.22\textwidth}
        \includegraphics[page = 1]{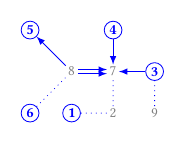}
        \subcaption{Parallel arrows.}\label{fig: arrowflows a}
    \end{subfigure}\hfill
    \begin{subfigure}{0.22\textwidth}
        \includegraphics[page = 2]{Figures/arrowflows.pdf}
        \subcaption{Missing path.}\label{fig: arrowflows b}
    \end{subfigure}\hfill
    \begin{subfigure}{0.22\textwidth}
        \includegraphics[page = 3]{Figures/arrowflows.pdf}
        \subcaption{Unital.}\label{fig: arrowflows c}
    \end{subfigure}\hfill
    \begin{subfigure}{0.22\textwidth}
        \includegraphics[page = 4]{Figures/arrowflows.pdf}
        \subcaption{Composite.}\label{fig: arrowflows d}
    \end{subfigure}
    \caption{Four $m$-arrowflows on $T$.}
    \label{fig: arrowflows}
\end{figure} 
If $k = n$, we emphasize this saying that the $k$-arrowflow is \emph{complete}. A complete $k$-arrowflow is an arrowflow in the sense of \cite{BEGLR}.

 Given an $S$-catalyst $\kappa = (\sigma, f)$ for $T$, we define the \emph{$m$-arrowflow induced by $\kappa$} as the $m$-arrowflow on $T$ with arc multiset $\{\!\{ f(i)\ |\ i\in S\}\!\}.$ 

\begin{ej}
    The four arrowflows of Figure \ref{fig: arrowflows} are induced from the four catalysts of Figure \ref{fig: catalysts}.
\end{ej}

The \emph{$m$-arrowflow class of $A$ for $S$}, denoted by $C_S(A)$, is defined as the set of $S$-catalysts for $T$ inducing the $m$-arrowflow $A$. The non-empty arrowflow classes form a partition of the set of $S$-catalysts, allowing us to rewrite Equation  \eqref{eqn:sum of catalysts} as
\[
  \det D[S] = \sum_{A} \sum_{\kappa \in C_S(A)}\sgn(\kappa)
\]
where $A$ ranges over the set of $m$-arrowflows on $T$.

To better understand this sum, we distinguish three different classes of $m$-arrowflows.
Given a digraph $A = (V,E)$, its \emph{underlying simple graph} is the graph $\SG(A)$ with vertex set $V$ and edge set $\{\{u,v\}\ : \ (u,v) \in E\}$. 
Given a simple graph $G = (V,E)$ and a subgraph  $H = (V,E')$ with same vertex set, the \emph{complement} of $H$ in $G$ defined as the subgraph $H^\comp := (V, E-E')$ of $G$. Note $G = H \cup H^\comp$.
Back in our setting, the \emph{missing forest of an arrowflow $A$} is the complement of the underlying simple graph of $A$, namely $\missing{A}$. Note that $\missing{A}$ is a spanning forest of $T$.
 
\begin{de}[Zero-sum arrowflow]
    Let $A$ be an $m$-arrowflow on $T$.
    \begin{itemize}
        \item If there is an arc $a\in E^{\pm}$ that appears with multiplicity at least $2$ in $A$, we say that $A$ has \emph{parallel arrows}.
        \item If there is a path $P(i,j)$ in $\missing{A}$ between a pair of vertices $i, j$ of $S$, we say $A$ has a \emph{missing path (for $S$)}.
    \end{itemize}
    An $m$-arrowflow is \emph{zero-sum (for $S$)} if it has either parallel arrows or a missing path for $S$.
\end{de}
\begin{de}[Unital arrowflow]
    An $m$-arrowflow $A$ is \emph{unital (for $S$)} if it has no parallel arrows and its missing forest is $S$-rooted, $\missing{A}\in\Srooted$.
\end{de}
\begin{de}[Composite arrowflow]
    An $m$-arrowflow $A$ is \emph{composite (for $S$)} if it has no parallel arrows and its missing forest is $(S,*)$-rooted, $\missing{A}\in\SSrooted$.
\end{de}

\begin{ej}
    The arrowflow of Figure \ref{fig: arrowflows a} has parallel arrows at $(8,7)$, and thus it is a zero-sum arrowflow. The arrowflow of Figure \ref{fig: arrowflows b} has a missing path $P(5,6)$. The remaining two arrowflows are unital and composite, respectively. Indeed, the missing forest of Figure \ref{fig: arrowflows c} is the $S$-rooted forest of Figure \ref{fig: tree and forests C}, and the missing forest of Figure \ref{fig: arrowflows d} is the $(S,*)$-rooted forest of Figure \ref{fig: tree and forests D}.
\end{ej}

\begin{lem}\label{lem: arrowflow partition}
    Every $m$-arrowflow is either zero-sum, unital, or composite for $S$.
\end{lem}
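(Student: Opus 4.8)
The plan is to dispose of the zero-sum alternative for free and then show that the two remaining alternatives are forced by a short double-counting argument on $\cc{\missing{A}}$. So let $A$ be an $m$-arrowflow on $T$, and suppose $A$ is \emph{not} zero-sum for $S$: by definition this means $A$ has no parallel arrows and $\missing{A}$ contains no path between two vertices of $S$. I will deduce that $\missing{A}$ is either $S$-rooted or $(S,*)$-rooted, that is, that $A$ is unital or composite.

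First I would record the identity
\[
\cc{\missing{A}} \;=\; 1 + \edge{\SG(A)}.
\]
Indeed, $\missing{A}$ is the complement of $\SG(A)$ in $T$, so $\edge{\missing{A}} = \edge{T} - \edge{\SG(A)} = (n-1) - \edge{\SG(A)}$; and since $\missing{A}$ is a spanning forest of $T$ we have $\cc{\missing{A}} = n - \edge{\missing{A}}$, which gives the claim.

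Next come the two bounds that trap $\cc{\missing{A}}$. Since $A$ has no parallel arrows, its $m$ arcs (counted with multiplicity) are $m$ \emph{distinct} elements of $E^\pm$; the map $(u,v)\mapsto\{u,v\}$ is at most two-to-one and every edge of $\SG(A)$ is the image of one of these arcs, so $\edge{\SG(A)}\le m$ and hence $\cc{\missing{A}}\le m+1$. On the other hand, since $\missing{A}$ has no missing path for $S$, no two vertices of $S$ lie in the same component of $\missing{A}$; as $\# S = m$, this forces $\cc{\missing{A}}\ge m$. Therefore $\cc{\missing{A}}\in\{m,m+1\}$. If $\cc{\missing{A}}=m$, then the $m$ components together contain the $m$ vertices of $S$ with at most one per component, so each contains exactly one, and $\missing{A}\in\Srooted$: $A$ is unital. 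If $\cc{\missing{A}}=m+1$, then the $m+1$ components contain the $m$ vertices of $S$ with at most one per component, so exactly one component contains none and each of the other $m$ contains exactly one, and $\missing{A}\in\SSrooted$: $A$ is composite.

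The only delicate point — the nearest thing to an obstacle — is keeping two different degeneracies apart: an arc occurring with multiplicity $\ge 2$ (\emph{parallel arrows}, which puts $A$ in the zero-sum class) versus a single edge of $T$ carrying both of its orientations in $A$ (which is permitted and merely lowers $\edge{\SG(A)}$ to $m-1$, so that $\cc{\missing{A}}=m$ and $A$ is still unital). Both phenomena are absorbed by the single inequality $\edge{\SG(A)}\le m$ together with the identity above, so the case analysis need not branch further; the extreme cases $m=2$ and $S=[n]$ are immediate from the same count.
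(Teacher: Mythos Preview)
Your proof is correct and follows essentially the same route as the paper: compute $\cc{\missing{A}} = 1 + \edge{\SG(A)}$, bound it above by $m+1$ and below by $m$ (the latter using the absence of a missing path), and read off that $\missing{A}$ is $S$-rooted or $(S,*)$-rooted. The only cosmetic difference is that you invoke the ``no parallel arrows'' hypothesis to justify $\edge{\SG(A)}\le m$, whereas the paper uses the simpler inequality $\edge{\SG(A)}\le\edge{A}=m$, valid for any multigraph; your final paragraph on anti-parallel versus parallel arrows is a helpful clarification but not logically required.
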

\begin{proof}
    Let $A$ be an $m$-arrowflow on $T$.
    The number of connected components of $\missing{A}$ is
    \begin{align*}
        \cc{\missing{A}} &= n - \edge{\missing{A}}\\
        &= n - \big((n-1) - \edge{\SG(A)}\big)\\
        &= 1 + \edge{\SG(A)}\\
        &\le 1 + \edge{A}\\
        &= m+1.   
    \end{align*}
    If $A$ has a missing path then $A$ is zero-sum. Suppose otherwise; then each vertex of $S$ lies in a different connected component of $\missing{A}$. In particular, we get $m\le \cc{\missing{A}} \le m+1$ and the missing forest $\missing{A}$ is either $S$-rooted or $(S,*)$-rooted. Altogether, $A$ is either zero-sum, unital, or composite.
\end{proof}

\section{Proof of the main theorem}
\label{subsec: proof of Thm A}
The proof of Theorem \ref{thm:main result} will follow from the following propositions, whose proofs will unfold over the next sections.
\begin{p}\label{prop:zero-sum}
Let $A$ be a zero-sum arrowflow. Then, 
\[
\sum_{\kappa\in C_S(A)} \sgn(\kappa) = 0.
\]
\end{p}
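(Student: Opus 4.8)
The plan is to build a sign-reversing involution on $C_S(A)$ for each zero-sum arrowflow $A$. There are two cases, according to whether $A$ has parallel arrows or a missing path; in fact the "missing path" case can be reduced to a statement about permutations, while the "parallel arrows" case is handled by a swap.

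First I would treat the case where $A$ has a missing path. Suppose $P(i,j)$ lies entirely in $\missing{A}$ for some $i,j\in S$ with $i\neq j$. The key observation is that for an $S$-catalyst $\kappa=(\sigma,f)$ inducing $A$, no arc $f(s)$ can lie on $P(i,j)$, since all arcs of $A$ avoid $\missing{A}$; hence for every $s\in S$, the path from $s$ to $\sigma(s)$ avoids at least one edge of $P(i,j)$, which forces $s$ and $\sigma(s)$ to lie on the same side of that edge. More precisely, removing the edges of $P(i,j)$ from $T$ partitions $[n]$ into blocks, and each such catalyst $\kappa$ must satisfy: $s$ and $\sigma(s)$ lie in the same block for all $s\in S$. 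In particular $\sigma$ permutes the elements of $S$ within each block, so $i$ and $j$ — which lie in different blocks (being separated by $P(i,j)$, a path of length $\geq 1$) — are never in the same cycle of $\sigma$. Now fix the involution on $C_S(A)$ that leaves $f$ unchanged and replaces $\sigma$ by $(i\,j)\circ\sigma$ (equivalently, conjugates... no — simply post-composes the transposition). Because $i,j$ are in distinct cycles of $\sigma$, the map $(i\,j)\circ\sigma$ merges those two cycles, so it is never equal to $\sigma$, the operation is an involution, and it flips the sign. One must check that $(i\,j)\circ\sigma$ together with $f$ is still a valid $S$-catalyst inducing $A$: this is where the block structure is used — the arc $f(s)$ lies on the path $s\to\sigma(s)$, and since that path stays within one block and the blocks are unchanged, $f(s)$ still lies on the path $s\to ((i\,j)\circ\sigma)(s)$ for every $s$, because the only vertices whose image changed are $i$ and $j$, and $f(i), f(j)$ are determined by their blocks. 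The cleanest way to see the validity is: within a block $B$, the restriction of $T$ to $B$ is a subtree, distances are computed inside it, and the data $(\sigma|_B, f|_B)$ is a $B\cap S$-catalyst for that subtree; swapping $\sigma$ on two elements in different blocks does not touch any block's internal data.

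Next, the case of parallel arrows: some arc $a=(u,v)\in E^\pm$ appears with multiplicity $\geq 2$ in $A$. Let $S_a = f^{-1}(a) = \{s\in S : f(s) = a\}$; by hypothesis $\#S_a \geq 2$ for every $\kappa\in C_S(A)$ (the multiplicity of $a$ in the induced arrowflow equals $\#f^{-1}(a)$, which is an invariant of the class $C_S(A)$). Remove the edge $\{u,v\}$ from $T$ to get two subtrees $T_u\ni u$ and $T_v\ni v$. Every $s\in S_a$ has $s\in T_u$ and $\sigma(s)\in T_v$ (since $a$ is oriented from $u$ to $v$ along the path $s\to\sigma(s)$). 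Pick the two smallest elements $s_1 < s_2$ of $S_a$ (canonical choice, independent of $\kappa$ since $S_a$ is). Define the involution by $f\mapsto f$ and $\sigma\mapsto \sigma\circ(s_1\,s_2)$ — precompose with the transposition swapping $s_1,s_2$. This swaps the images: the new $\sigma$ sends $s_1\mapsto\sigma(s_2)$ and $s_2\mapsto\sigma(s_1)$, both of which are in $T_v$, and $a=(u,v)$ still lies on the path from $s_1$ (resp. $s_2$) to its new image. All other values of $\sigma$ and $f$ are untouched, so the induced arrowflow is still $A$. The sign flips, and this is an involution with no fixed points provided $\sigma(s_1)\neq\sigma(s_2)$... which need not hold. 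If $\sigma(s_1)=\sigma(s_2)=:w$ then precomposing with $(s_1\,s_2)$ does nothing to $\sigma$; in that subcase I instead alter $f$ is not available (it's fixed). The fix: when $\sigma(s_1)=\sigma(s_2)$, the transposition $(s_1\,s_2)$ fixes $\sigma$, so pair $\kappa$ with a different modification — but since $\sigma$ is a permutation, $\sigma(s_1)=\sigma(s_2)$ forces $s_1=s_2$, contradiction. So this subcase cannot occur, and the involution is fixed-point-free after all. Hence $\sum_{\kappa\in C_S(A)}\sgn(\kappa)=0$ in both cases.

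The main obstacle I anticipate is verifying that each proposed involution genuinely preserves the arrowflow class $C_S(A)$, i.e. that modifying $\sigma$ by a transposition keeps $(\sigma',f)$ a valid $S$-catalyst inducing the same $A$. This rests on a structural lemma: if $\kappa=(\sigma,f)\in C_S(A)$ and $A$ is zero-sum via a missing path through edge $e$ (resp. via parallel arrows on arc $a$), then $\sigma$ respects the bipartition of $[n]$ induced by deleting $e$ (resp. the relevant side-constraints for $a$). Making that precise — and checking that the two chosen anchor pairs ($\{i,j\}$ from the missing path, $\{s_1,s_2\}$ from $f^{-1}(a)$) are invariants of the class and always lie on opposite sides — is the crux; everything else is a routine sign computation. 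One subtlety to flag: a zero-sum arrowflow might have *both* parallel arrows and a missing path, but that is harmless — we just pick one witness (say, parallel arrows takes priority, with a fixed deterministic rule for choosing $a$) so that the involution is well-defined on all of $C_S(A)$.
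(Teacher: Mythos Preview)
Your overall shape is right --- build a sign-reversing involution on $C_S(A)$ --- and both of your involutions are in fact correct. But the justification you give for the missing-path case is wrong, and this is the genuine gap.

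The block claim is false. It is not true that $s$ and $\sigma(s)$ must lie in the same component of $T$ minus the edges of $P(i,j)$. All you know is that the single arc $f(s)$ avoids $P(i,j)$; the rest of the path $P(s,\sigma(s))$ may freely traverse edges of $P(i,j)$. Concretely: take the star with center $0$ and leaves $1,2,3,4$, set $S=\{1,2,3,4\}$, and let $A$ be the arrowflow with arc multiset $\{(0,3),(3,0),(0,4),(4,0)\}$, so that $P(1,2)=1\text{--}0\text{--}2$ is a missing path. Then $(\sigma,f)$ with $\sigma=(1\ 3\ 2\ 4)$ and $f(1)=(0,3)$, $f(2)=(0,4)$, $f(3)=(3,0)$, $f(4)=(4,0)$ is a valid $S$-catalyst in $C_S(A)$, yet $\sigma(1)=3$ crosses blocks and $1,2$ lie in the same cycle of $\sigma$. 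So neither ``$\sigma$ respects the block partition'' nor ``$i,j$ are in distinct cycles'' holds, and your validity argument (``swapping $\sigma$ on two elements in different blocks does not touch any block's internal data'') collapses.

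The fix is exactly the structural lemma you flagged as the crux, but stated correctly: for any $s$, the paths $P(s,i)$ and $P(s,j)$ agree outside $P(i,j)$ (their symmetric difference is $P(i,j)$, via the median of $s,i,j$). Since $f(s)$ is an arc of $A$ and hence not in $P(i,j)\subseteq\missing{A}$, it lies on the common part of both paths, with the same orientation. That is precisely how the paper argues it (Lemma~5.1), though the paper uses the uniform involution $(\sigma,f)\mapsto(\sigma\circ(i\ j),\,f\circ(i\ j))$ in both cases rather than your post-composition.

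A smaller point: your claim that $S_a=f^{-1}(a)$ is an invariant of $C_S(A)$ is also false --- only its cardinality is determined by $A$. This does not actually hurt your parallel-arrows involution, since you keep $f$ fixed and hence $S_a$ (and your chosen $s_1,s_2$) are preserved under the map; but you should say so explicitly rather than asserting the stronger false statement.
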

\begin{p}\label{prop:unital}
Let $A$ be a unital arrowflow. Then, 
\[
\sum_{\kappa\in C_S(A)} \sgn(\kappa) = (-1)^{m-1}.
\]
Let $F \in \Srooted$ be an $S$-rooted forest.
The number of unital arrowflows with missing forest $F$ is $(m-1)2^{m-2}$.
\end{p}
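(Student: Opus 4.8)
The plan is to handle the two assertions separately, starting with the enumeration of unital arrowflows with a fixed missing forest $F \in \Srooted$, since the sign computation will be easiest once we understand the structure of these arrowflows concretely. Fix $F = \bigsqcup_{s \in S} F_s$. A unital arrowflow $A$ with $\missing{A} = F$ is determined by choosing, for each of the $m-1$ edges of $T$ not in $F$, an orientation; but not every choice yields a \emph{valid} arrowflow, i.e.\ one that is induced by some $S$-catalyst. First I would argue that $\SG(A)$ must consist of exactly those $m-1$ edges (since $A$ has $m$ arcs, no parallel arrows, and $\edge{\SG(A)} = n - 1 - \edge{F} = m-1$, forcing each edge of $\SG(A)$ to carry multiplicity exactly one). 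Contracting each component $F_s$ to a single vertex turns $T$ into a tree $T/F$ on $m$ vertices labelled by $S$, whose $m-1$ edges correspond bijectively to the edges of $T$ outside $F$. So an arrowflow with missing forest $F$ is the same as an orientation of the edges of the $m$-vertex tree $T/F$, with the extra data of which original arc of $E^\pm$ each oriented edge comes from — but the latter is forced, so the count is over orientations of $T/F$ subject to the catalyst-realizability condition.

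The key structural step is to characterize which orientations of $T/F$ are realizable by an $S$-catalyst. Given an orientation $\vec{T/F}$, one builds a map $f \colon S \to E^\pm$ and needs a permutation $\sigma \in \mathbb S_S$ such that $f(s)$ lies on the oriented path from $s$ to $\sigma(s)$ for every $s$. I expect the correct statement to be: the orientation is realizable precisely when it is a \emph{functional} orientation, i.e.\ every vertex of $T/F$ has out-degree exactly one (equivalently, since $T/F$ is a tree on $m$ vertices with $m-1$ edges, the orientation has no vertex of out-degree $\ge 2$, and then automatically exactly one vertex has out-degree $0$). Such orientations of an $m$-vertex tree are counted by $(m-1)\cdot 2^{\,?}$ — more precisely, a functional digraph on a tree of this type is obtained by choosing a root $r$ (the unique sink, $m$ choices) and orienting every edge toward $r$; wait, that gives every vertex out-degree one except $r$, so there are exactly $m$ such orientations, not $(m-1)2^{m-2}$. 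So the realizability condition must be weaker than "functional": I would instead allow out-degrees to be $0$ or $1$ at will, as long as the induced multiset of arcs $\{\!\{f(s)\}\!\}$ can be completed to a catalyst. The right count $(m-1)2^{m-2}$ strongly suggests the following: choose a distinguished edge-orientation pattern via a spanning structure on $T/F$ — concretely, pick one of the $m-1$ edges to be the "return" edge and the remaining $m-2$ edges each get one of $2$ orientations, giving $(m-1)2^{m-2}$. Establishing this bijection cleanly, and verifying that exactly these arrowflows lie in some non-empty class $C_S(A)$, is the heart of the argument and I expect it to be the main obstacle.

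For the sign statement, once we know $C_S(A)$ is non-empty for a unital $A$, I would show that $\sum_{\kappa \in C_S(A)} \sgn(\kappa) = (-1)^{m-1}$ by exhibiting the permutations $\sigma$ arising in $C_S(A)$. The point is that $f$ being fixed by $A$ up to the choice of $\sigma$, the set $\{\sigma : (\sigma,f)\in C_S(A)\}$ has a clean description: each arc of $A$ constrains $\sigma$ to move $s$ "across" that arc in the contracted tree, and because $\missing{A}$ is $S$-rooted the constraints force $\sigma$ to be a specific $m$-cycle on $S$ up to the freedom already counted — here I would invoke the analysis of \cite{BEGLR} for the complete case $S = [n]$, where the analogous statement is that each arrowflow class contributes $\sgn$ of a single long cycle, namely $(-1)^{n-1}$. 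Transporting that argument through the contraction $T \rightsquigarrow T/F$ (identifying $S$-catalysts for $T$ whose missing forest refines $F$ with catalysts for $T/F$ in the sense of \cite{BEGLR}) reduces the sign computation to the known $S = [n]$ result applied to the $m$-vertex tree $T/F$, yielding $(-1)^{m-1}$. The main obstacle remains the combinatorial bookkeeping of the contraction and the realizability characterization; the sign then follows formally, and multiplying the per-forest count $(m-1)2^{m-2}$ is immediate once the bijection with orientations of $T/F$ is in place.
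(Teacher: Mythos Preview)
Your counting argument has a genuine gap rooted in a miscount of arcs. You correctly note that $A$ has $m$ arcs and that $\edge{\SG(A)} = m-1$, but you then conclude ``each edge of $\SG(A)$ carries multiplicity exactly one.'' That is impossible: $m$ arcs distributed over $m-1$ edges forces at least one edge to carry two arcs. The condition ``no parallel arrows'' forbids a repeated \emph{oriented} arc, but it does not forbid an edge $\{u,v\}$ from supporting both $(u,v)$ and $(v,u)$. In fact that is exactly what happens: a unital arrowflow has a unique edge with an \emph{anti-parallel} pair of arcs, and the remaining $m-2$ edges each carry a single arc with an arbitrary orientation. This gives the count $(m-1)\cdot 2^{m-2}$ immediately --- the factor $m-1$ chooses the anti-parallel edge, and $2^{m-2}$ orients the rest. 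There is no ``realizability'' condition to impose here: an arrowflow is defined purely as a directed multigraph supported on $T$, not as something induced by a catalyst, so your detour through functional orientations and out-degree constraints is unnecessary (and, as you discovered, leads to wrong counts like $m$). Your eventual guess ``pick one of the $m-1$ edges to be the `return' edge'' is exactly the anti-parallel edge, but you arrived there by accident rather than by the arc-count argument.

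For the sign statement, your plan to contract the components of $F$ and reduce to the complete case on the $m$-vertex tree $T/F$ is the right idea and matches the paper's approach: the paper passes to the quotient arrowflow $A/\sim$ (a complete unital arrowflow on $T/\sim$), subdivides the anti-parallel edge to insert a root $*$, and then invokes the route-map and LGV machinery of \cite{BEGLR} to find a unique non-intersecting family whose underlying permutation is an $m$-cycle, giving sign $(-1)^{m-1}$. Note, however, that this step also uses the anti-parallel structure you missed: the root $*$ is placed by subdividing precisely the anti-parallel edge, which ensures $\#\Asc(*)=0$ and $\#\Des(*)=2$ so that the cited results apply. Without the correct picture of the arrowflow, you cannot carry out this reduction.
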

\begin{p}\label{prop:composite}
Let $F\in\SSrooted$ be an $(S,*)$-rooted spanning forest of $T$. Then,
\[
\sum_{\substack{A\\\text{composite}\\ \missing{A} = F}}\sum_{\kappa\in C_S(A)} \sgn(\kappa) =(-1)^{m} 2^{m - 2}(\bdeg{F_*} - 1)(\bdeg{F_*} - 4).
\]
\end{p}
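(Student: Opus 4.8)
The plan is to fix an $(S,*)$-rooted spanning forest $F = F_* \sqcup \bigsqcup_{s\in S} F_s$ and to understand, combinatorially, all composite arrowflows $A$ with $\missing A = F$, together with the signed count of catalysts inducing each of them. Since $\missing A = F$ means that $\SG(A) = F^\comp$ (the complement of $F$ inside $T$), and $F$ has $m+1$ components while $T$ has $n-1$ edges, we have $\edge{\SG(A)} = m$; as $A$ has exactly $m$ arcs and no parallel arrows (being composite), this forces $A$ to be obtained by orienting each of the $m$ edges of $F^\comp$ in one of two ways. So the composite arrowflows with missing forest $F$ are in bijection with the $2^m$ orientations of the $m$ edges of $F^\comp$. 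The edges of $F^\comp$ are precisely the $m$ edges of $T$ that, when added back to $F$, merge two components; contracting each component of $F$ to a point turns $F^\comp$ into a tree $\widetilde T$ on $m+1$ vertices (one for each component, including a distinguished vertex $*$ for $F_*$), and choosing an orientation of $F^\comp$ is the same as choosing an orientation of $\widetilde T$.

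The next step is to evaluate $\sum_{\kappa \in C_S(A)}\sgn(\kappa)$ for a fixed such $A$. This is the point where I expect to lean hardest on \cite{BEGLR} and \cite{derangements}: the machinery there (as invoked in the proof of Proposition \ref{prop:unital}) already computes the signed catalyst count of an arrowflow with no parallel arrows in terms of the combinatorics of its missing forest. For a unital arrowflow the answer is the constant $(-1)^{m-1}$; for a composite arrowflow I expect the analysis to split according to which component the floating component $F_*$ is "attached to" in the oriented tree $\widetilde T$, or more precisely according to the local picture of arcs around $F_*$. The key structural fact to extract is: a composite arrowflow $A$ with $\missing A = F$ admits a catalyst only under a combinatorial condition on the orientation of $\widetilde T$ near $*$ (each $s \in S$ must be able to reach its partner $\sigma(s)$ using the available arcs and avoiding the missing edges), and when it does, $\sum_{\kappa\in C_S(A)}\sgn(\kappa) = (-1)^{m-1}\cdot c(A)$ for an explicit small integer $c(A)$ coming from the number of ways of routing through the floating component. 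Summing a signed constant over the admissible orientations of $\widetilde T$ is then a finite computation.

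Granting that reduction, the final step is the bookkeeping: showing that
\[
\sum_{\substack{A \text{ composite}\\ \missing A = F}} \sum_{\kappa\in C_S(A)}\sgn(\kappa)
= (-1)^{m-1}\!\!\sum_{\substack{A \text{ composite}\\ \missing A = F}}\!\! c(A)
= (-1)^{m}2^{m-2}(\bdeg{F_*}-1)(\bdeg{F_*}-4).
\]
Here the quantity $\bdeg{F_*}$ enters because $\partial F_*$ counts exactly the vertices of $T$ adjacent to $F_*$ through edges of $F^\comp$, i.e. the degree of the vertex $*$ in $\widetilde T$ equals $\bdeg{F_*}$ (no multiplicity, since $A$ has no parallel arrows); the $2^{m-2}$ should appear as $2^{m - \delta_*}$ times a polynomial in $\delta_* := \bdeg{F_*}$, after summing over the $2^{\delta_*}$ local orientations around $*$ and the $2^{m-\delta_*}$ orientations of the rest of $\widetilde T$. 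I would verify the identity by computing $\sum c(A)$ over the local orientations at $*$: this should reduce to a sum like $\sum_{k} \binom{\delta_*}{k}(\text{something linear in }k)$ that telescopes to $2^{\delta_* - 2}(\delta_* - 1)(\delta_* - 4)$ up to sign, matching a derangement-type identity from \cite{derangements}.

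The main obstacle is the second step: pinning down the exact value $c(A)$ of the signed catalyst sum for a composite arrowflow, including the sign coming from $\sgn(\sigma)$ as the routing through $F_*$ varies, and checking that the "floating" component genuinely contributes the predicted quadratic factor $(\delta_* - 1)(\delta_* - 4)$ rather than the linear factor one might naively expect. The two-component phenomenon — that catalysts passing through $F_*$ can enter and exit it through any of its $\delta_*$ boundary edges, but the constraint "$F_*$ contains no vertex of $S$" forces at least two such crossings and introduces a derangement-like correction — is what makes the answer quadratic; isolating and proving that is where the real work lies, and it is what Sections \ref{sec:quotient}, \ref{sec:route maps}, and \ref{sec:composite} are presumably set up to handle.
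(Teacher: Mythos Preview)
Your outline is correct and matches the paper's approach almost step for step: the paper passes to the quotient tree on $S\sqcup\{*\}$, shows via the route map and the derangement count that for a fixed composite $A$ one has $\sum_{\kappa\in C_S(A)}\sgn(\kappa)=(-1)^{m}(a-1)(d-1)$ with $a=\#\Asc(*)$, $d=\#\Des(*)$ (this is your $c(A)$), then sums over orientations to get $(-1)^m 2^{m-\delta_*}\sum_{a}\binom{\delta_*}{a}(a-1)(\delta_*-a-1)$ and applies a binomial identity. The only slip is that the summand in the final identity is quadratic in $k$, namely $(k-1)(\delta_*-k-1)$, not linear; this is exactly the paper's Lemma~\ref{lem: el puto 4}, and the ``derangement-type correction'' you anticipate is precisely the factor $(a-1)(d-1)$ coming from two independent derangement counts on $\Asc(*)$ and $\Des(*)$.
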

We end this section with the proof of Theorem \ref{thm:main result}, which we recall below.
\begin{thm*}[A]
Let $T = ([n], E)$ be a tree and let $D$ be its distance matrix. Let $S \subseteq [n]$ be a subset of cardinality $m\ge2$. Then
    \[
\det D[S] = (-1)^{m-1}2^{m-2}\left((m - 1)\kappaTS - \!\!\!\!\sum_{\SSrooted} \!\!\!\!\big(\bdeg{F_*} - 1\big)\big(\bdeg{F_*} - 4\big)\right).
\]
\end{thm*}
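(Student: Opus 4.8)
The plan is to assemble Theorem \ref{thm:main result} directly from the three Propositions, using the partition of $m$-arrowflows furnished by Lemma \ref{lem: arrowflow partition}. Starting from the identity
\[
\det D[S] = \sum_{A} \sum_{\kappa \in C_S(A)}\sgn(\kappa),
\]
where $A$ ranges over all $m$-arrowflows on $T$, I would split the outer sum into the three disjoint families guaranteed by Lemma \ref{lem: arrowflow partition}: zero-sum, unital, and composite arrowflows (for $S$).

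The zero-sum contribution vanishes term by term by Proposition \ref{prop:zero-sum}, so that family can be discarded. For the unital contribution I would reorganize the sum according to the missing forest: every unital arrowflow has its missing forest in $\Srooted$, and conversely Proposition \ref{prop:unital} tells us that each $F \in \Srooted$ is the missing forest of exactly $(m-1)2^{m-2}$ unital arrowflows, each of which contributes $(-1)^{m-1}$. Hence the unital part equals $(-1)^{m-1}(m-1)2^{m-2}\,\kappaTS$, where $\kappaTS = \#\Srooted$. For the composite contribution I would again group by missing forest, which this time ranges over $\SSrooted$; Proposition \ref{prop:composite} evaluates the inner double sum for a fixed $F$ as $(-1)^{m}2^{m-2}(\bdeg{F_*}-1)(\bdeg{F_*}-4)$, so the composite part is
\[
\sum_{F \in \SSrooted} (-1)^{m}2^{m-2}\big(\bdeg{F_*}-1\big)\big(\bdeg{F_*}-4\big).
\]

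Finally I would combine the two surviving contributions, factoring out $(-1)^{m-1}2^{m-2}$ and using $(-1)^{m} = -(-1)^{m-1}$ to flip the sign of the composite term, which produces exactly
\[
(-1)^{m-1}2^{m-2}\Big((m-1)\kappaTS - \sum_{F \in \SSrooted}\big(\bdeg{F_*}-1\big)\big(\bdeg{F_*}-4\big)\Big),
\]
the claimed formula. Since all of the mathematical substance is carried by Lemma \ref{lem: arrowflow partition} and Propositions \ref{prop:zero-sum}, \ref{prop:unital}, \ref{prop:composite}, the only real obstacle at this stage is bookkeeping: checking that the arrowflow classes genuinely partition the set of $S$-catalysts (so nothing is double-counted or lost when we regroup by missing forest), and tracking the signs and the factor $2^{m-2}$ through the regrouping — in particular the sign discrepancy between the unital term ($(-1)^{m-1}$) and the composite term ($(-1)^m$). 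As a consistency check I would revisit the case $S=[n]$, where $\kappaTS = 1$ and $\SSrooted = \emptyset$, and confirm that the formula collapses to the Graham--Pollak identity $\det D = (-1)^{n-1}(n-1)2^{n-2}$.
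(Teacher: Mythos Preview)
Your proposal is correct and follows essentially the same approach as the paper: start from $\det D[S] = \sum_A \sum_{\kappa\in C_S(A)}\sgn(\kappa)$, split the outer sum over zero-sum, unital, and composite arrowflows via Lemma~\ref{lem: arrowflow partition}, apply Propositions~\ref{prop:zero-sum}, \ref{prop:unital}, \ref{prop:composite} to each piece (grouping by missing forest for the latter two), and combine. The sign bookkeeping and the $S=[n]$ sanity check you describe are exactly right.
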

\begin{proof}[Proof of Theorem \ref{thm:main result}]
    Equation \eqref{eqn:sum of catalysts} establishes
    \[
        \det D[S] = \sum_{(\sigma,f)\in K_S} \sgn(\sigma) = \sum_A \sum_{\kappa\in C_S(A)}\sgn(\kappa).
    \]
    We expand this sum into three summands and get
    \[
    \det D[S] = 
    \sum_{\substack{A\\\text{zero-sum}}} \sum_{\kappa\in C_S(A)}\sgn(\kappa) +
    \sum_{\substack{A\\\text{unital}}} \sum_{\kappa\in C_S(A)}\sgn(\kappa) + 
    \sum_{\substack{A\\\text{composite}}} \sum_{\kappa\in C_S(A)}\sgn(\kappa).
    \]
    The first summand vanishes by Proposition \ref{prop:zero-sum}. By Proposition \ref{prop:unital}, the second summand is
    \[
    (-1)^{m-1}(m-1)2^{m-2}\kappaTS.
    \]
    Finally, by Proposition \ref{prop:composite}, the last summand is
    \begin{align*}
        \sum_{\substack{A\\\text{composite}}} \sum_{\kappa\in C_S(A)}\sgn(\kappa)
        &= \sum_{F\in\SSrooted}\sum_{\substack{A\\\text{composite}\\\missing{A}=F}} \sum_{\kappa\in C_S(A)}\sgn(\kappa)\\
        &= \sum_{F\in\SSrooted} (-1)^{m}2^{m-2}(\bdeg{F_*}-1)(\bdeg{F_*}-4).
    \end{align*}
    Altogether, we obtain the desired formula.
\end{proof}

\section{Zero-sum arrowflows}
\label{sec:zero-sum}
This section is dedicated to a combinatorial proof of Proposition \ref{prop:zero-sum}. Given a zero-sum arrowflow $A$, we construct a sign-reversing involution on the set $C_S(A)$ of catalysts that induce $A$. We conclude that the signed enumeration of such catalysts is $0$. In order to do this, we adapt the  constructions of \cite{BEGLR}.
\begin{lem}\label{lem:zero-sum involution}
  Let $A$ be a zero-sum $m$-arrowflow on $T$. If $A$ has a missing path, let $i$ and $j$ be its endpoints. On the other hand, if $A$ has parallel arrows, let $i$ and $j$ be the two preimages of those arrows under $ f$. Then, the map 
  \begin{align*}
      \varphi: ~~ C_S(A) &\to C_S(A)\\
      (\sigma, f)&\mapsto(\sigma \circ (i\ j), f \circ (i\ j))
  \end{align*}
  is a sign-reversing involution.
\end{lem}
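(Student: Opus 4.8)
The plan is to verify the three defining properties of $\varphi$ in turn: that $\varphi$ lands in $C_S(A)$ (well-definedness), that $\varphi \circ \varphi = \mathrm{id}$ (involutivity), and that $\sgn(\varphi(\sigma,f)) = -\sgn(\sigma,f)$ (sign-reversing), the last being immediate since $\sgn(\sigma\circ(i\ j)) = -\sgn(\sigma)$ because $(i\ j)$ is a transposition. The only genuine content is well-definedness, and the hinge is that the transposition $(i\ j)$ is chosen precisely so that swapping $i$ and $j$ does not change which arcs are used nor their orientations, i.e. the arc multiset $\{\!\{ (f\circ(i\ j))(k) \mid k \in S\}\!\}$ equals $\{\!\{ f(k) \mid k \in S \}\!\}$, so the induced $m$-arrowflow is still $A$. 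Involutivity is then formal: $(i\ j)$ is an involution in $\mathbb{S}_S$, so $\sigma\circ(i\ j)\circ(i\ j) = \sigma$ and $f\circ(i\ j)\circ(i\ j) = f$, provided we also check that the pair $\{i,j\}$ extracted from $\varphi(\sigma,f)$ is the same pair $\{i,j\}$ (the missing path of $A$, resp. the parallel arrows of $A$, depend only on $A$, which is preserved, and the preimages under $f\circ(i\ j)$ of the parallel arrows are exactly $\{i,j\}$ again).

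First I would record the easy observations. Since $A$, being a zero-sum arrowflow, either has a missing path or has parallel arrows, the vertices $i,j$ in the statement are well-defined (in the missing-path case $i,j$ are the two distinct endpoints in $S$; in the parallel-arrows case $i,j$ are the two distinct elements of $S$ with $f(i) = f(j) = a$ — here one must note that because $(\sigma,f)$ induces $A$ and $a$ has multiplicity exactly $2$ in $A$, there are exactly two such preimages, and if the multiplicity exceeds $2$ one simply fixes a choice of two of them, consistently across $C_S(A)$; but in fact it suffices to pick any arc of multiplicity $\ge 2$ and two of its preimages by a fixed rule depending only on $A$). In both cases $i \neq j$, so $(i\ j) \in \mathbb{S}_S$ is a genuine transposition and $\sgn(\sigma\circ(i\ j)) = -\sgn(\sigma)$.

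The main step is to show $\varphi(\sigma,f) = (\sigma\circ(i\ j),\, f\circ(i\ j))$ is again an $S$-catalyst inducing $A$. Write $(\sigma',f') = (\sigma\circ(i\ j), f\circ(i\ j))$. For $k \in S \setminus \{i,j\}$ we have $\sigma'(k) = \sigma(k)$ and $f'(k) = f(k)$, so the catalyst condition at $k$ is inherited unchanged. It remains to check it at $i$ and at $j$. In the \emph{parallel-arrows} case, $\sigma'(i) = \sigma(j)$ and $f'(i) = f(j) = a$; since $(\sigma,f)$ is a catalyst, $a = f(j)$ lies on $P(j,\sigma(j)) = P(j,\sigma'(i))$ oriented from $j$ towards $\sigma'(i)$. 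We must instead see that $a$ lies on $P(i,\sigma'(i))$, oriented from $i$. But also $a = f(i)$ lies on $P(i,\sigma(i))$ oriented from $i$; hence $a$ lies on both $P(i,\sigma(i))$ and $P(j,\sigma(j))$ with the \emph{same} orientation, and in a tree this forces $a$ to lie on $P(i,\sigma(j))$ oriented from $i$ — this is the routine tree-combinatorics point (two directed paths through a common arc $a=(u,v)$ agreeing on orientation: the source endpoints both lie on the $u$-side or beyond, the target endpoints both on the $v$-side, so the crossed path $i \to \sigma(j)$ also traverses $a$ from $u$ to $v$), and symmetrically for $j$. In the \emph{missing-path} case, $f(i)$ and $f(j)$ are arcs of $\SG(A) = T \setminus \missing{A}$, while $P(i,j)$ lies entirely in $\missing{A}$; the catalyst condition gives $f(i) \in P(i,\sigma(i))$ and $f(j) \in P(j,\sigma(j))$. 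One argues that $P(i,\sigma(i))$ must leave $i$ along an arc not on $P(i,j)$ — indeed $f(i) \notin \missing{A} \supseteq P(i,j)$ forces the first arc of $P(i,\sigma(i))$ after the initial segment shared with nothing to be off $P(i,j)$; more carefully, since $f(i)\in P(i,\sigma(i))$ is an edge of $T$ not on the path $P(i,j)$, and $T$ is a tree, the path $P(i,\sigma(i))$ and the path $P(i,j)$ diverge at $i$ (they cannot share any edge that is both on $P(i,j)\subseteq\missing{A}$ and carries $f(i)$). Consequently $P(j,\sigma(i)) = P(j,i) \cup P(i,\sigma(i))$ is non-backtracking, passes through $i$, and contains $f(i)$ oriented from $i$ towards $\sigma(i)$, which is exactly oriented from $j$ towards $\sigma(i) = \sigma'(j)$; so $f'(j) = f(i)$ satisfies the catalyst condition at $j$, and symmetrically at $i$. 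In either case the multiset of arcs is unchanged, so $\varphi(\sigma,f) \in C_S(A)$.

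Finally, for involutivity: applying $\varphi$ to $(\sigma',f')$, the relevant pair of vertices is again $\{i,j\}$ — in the missing-path case because the missing path of $A$ is a property of $A$ alone and $A$ is unchanged; in the parallel-arrows case because the fixed arc $a$ (or whichever arc of multiplicity $\ge 2$ the rule selects) is unchanged and its set of $f'$-preimages is $(f\circ(i\ j))^{-1}(a) = (i\ j)\big(f^{-1}(a)\big)$, which contains the same pair $\{i,j\}$, and the fixed selection rule picks the same two. Hence $\varphi(\varphi(\sigma,f)) = (\sigma\circ(i\ j)\circ(i\ j),\, f\circ(i\ j)\circ(i\ j)) = (\sigma,f)$. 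Combined with $\sgn(\varphi(\sigma,f)) = \sgn(\sigma\circ(i\ j)) = -\sgn(\sigma) = -\sgn(\sigma,f)$, this shows $\varphi$ is a sign-reversing involution on $C_S(A)$. The one point demanding care — and the main obstacle — is the tree-combinatorics claim that a shared, consistently-oriented arc on two paths $P(i,\sigma(i))$ and $P(j,\sigma(j))$ (resp. the divergence of $P(i,\sigma(i))$ from a missing path $P(i,j)$ at $i$) forces the crossed path $P(i,\sigma(j))$ to traverse that arc with the same orientation; this is where the constructions of \cite{BEGLR} are invoked.
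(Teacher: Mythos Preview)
Your overall strategy matches the paper's: the only real content is well-definedness, and involutivity and sign-reversal are formal. Your parallel-arrows argument is correct and is exactly the paper's argument phrased differently: removing the edge underlying $a=(u,v)$ disconnects $T$, with $i,j$ on the $u$-side and $\sigma(i),\sigma(j)$ on the $v$-side, so the crossed paths $P(i,\sigma(j))$ and $P(j,\sigma(i))$ also traverse $a$ from $u$ to $v$.

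Your missing-path argument, however, contains a false step. You assert that $P(i,\sigma(i))$ and $P(i,j)$ ``diverge at $i$'', and conclude that $P(j,\sigma(i)) = P(j,i)\cup P(i,\sigma(i))$ passes through $i$. This need not hold. Take $T$ to be the path $1\text{--}2\text{--}3\text{--}4\text{--}5$, $i=1$, $j=3$, with $P(1,3)\subseteq\missing{A}$, and suppose $\sigma(1)=5$ with $f(1)=(4,5)$. Then $P(1,5)$ and $P(1,3)$ share the initial segment $1\text{--}2\text{--}3$; they do not diverge at $1$, and $P(3,5)$ does not pass through $1$. The correct observation (which the paper uses) is that, in a tree, $P(j,\sigma(i))$ and $P(i,\sigma(i))$ coincide outside of $P(i,j)$: if $v$ is the median of $i,j,\sigma(i)$ then both paths contain $P(v,\sigma(i))$ and differ only on the segments $P(i,v),P(j,v)\subseteq P(i,j)$. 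Since $f(i)\in P(i,\sigma(i))$ but $f(i)\notin P(i,j)$, we get $f(i)\in P(v,\sigma(i))\subseteq P(j,\sigma(i))$ with the same orientation. This is elementary; no machinery from \cite{BEGLR} is required.
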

\begin{proof}
The map is clearly an involution if it is well defined. To see that it is well defined, we need to show that $ f(i)$ is an arc in both $P(j,\sigma(i))$ and $P(i, \sigma(i))$.

Suppose $A$ has parallel arrows, $ f(i) =  f(j)$. Then, $i$ and $j$ lie in the same connected component of $T-\{ f(i)\}$, whereas $\sigma(i)$ and $\sigma(j)$ lie in the other component. This shows the claim.

Suppose $A$ has a missing path $P(i,j) \subseteq \missing{A}$.
Now $P(j,\sigma(i))$ and $P(i,\sigma(i))$ coincide everywhere except in a subgraph of $P(i,j)$. Since $ f(i)$ is not in $P(i,j)$, it must appear with the same orientation in both paths. See Figure \ref{fig:empty_involution}.   \end{proof}

\begin{figure}[h]
  \centering
  \raisebox{-.5\totalheight}{\includegraphics[page = 1]{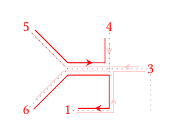}}
  \quad $\stackrel{\varphi}{\longleftrightarrow}$ \quad
  \raisebox{-.5\totalheight}{\includegraphics[page = 2]{Figures/zerosumInvolution.pdf}}
  \caption{Involution $\varphi$ on an $m$-arrowflow class $C_S(A)$, where $A$ is as in Figure \ref{fig: arrowflows b} and has missing path $P(5, 6)$.}
  \label{fig:empty_involution}
\end{figure}
\section{Quotient arrowflows}
\label{sec:quotient}
Given a simple graph $G = (V,E)$ and a partition $V = \bigsqcup_{i\in I} V_i$ of its vertex set, we define the \emph{quotient graph $G/\sim_I$ with respect to this partition} as the graph obtained from $G$ by contracting each $V_i$.
That is, $G/\sim_I$ has vertex set $I$ and an edge $\{i,j\}$ whenever $i\neq j$ and there exists $u \in V_i$ and $v \in V_j$ such that $\{u,v\}\in E$. Similarly, let $A$ be a digraph with underlying simple graph $\SG(A) = G$ as above. The quotient \emph{multigraph $A/\sim_I$ with respect to this partition} is defined as the multigraph obtained from $A$ by contracting each $V_i$.

\begin{de}[Quotient arrowflow]
    Let $A$ be an arrowflow, let $F = \missing{A} = \bigsqcup_{i\in I} F_i$ be its missing forest, $F_i = (V_i, E_i)$ for each $i\in I$. Note that $V = \bigsqcup_{i\in I} V_i$ is a partition of $V$.
    The \emph{quotient arrowflow $A/\sim$} is defined to be the quotient multigraph with respect to this partition. See Figure \ref{fig: quotient unital}.
\end{de}

\begin{figure}[h]
    \centering
    \begin{subfigure}{0.32\textwidth}
        \centering
        \includegraphics[page = 1]{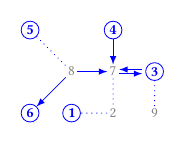}
        \subcaption{An arrowflow $A$.}
        \label{fig: quotient unital a}
    \end{subfigure}
    \begin{subfigure}{0.32\textwidth}
        \centering
        \includegraphics[page = 2]{Figures/quotientUnital.pdf}
        \subcaption{Its missing forest $\missing{A}$.}
        \label{fig: quotient unital b}
    \end{subfigure}
    \begin{subfigure}{0.32\textwidth}
        \centering
        \includegraphics[page = 3]{Figures/quotientUnital.pdf}
        \subcaption{The quotient arrowflow $A/\sim$.}
        \label{fig: quotient unital c}
    \end{subfigure}
    \caption{ }
    \label{fig: quotient unital}
\end{figure}
 In Lemma \ref{lem: quotient arc bijection}, we slightly abuse notation by identifying an arrowflow $A$ and its quotient $A/\sim$ with their respective sets of arcs (without multiplicity).
\begin{lem}
\label{lem: quotient arc bijection}
Let $A$ be an arrowflow with missing forest $\bigsqcup_{i\in I} F_i$, and let $\psi: [n] \to I$ denote the quotient projection.
Then, the map
    \begin{align*}
      \phi: ~~ A &\to A/\sim\\
      (i, j)&\mapsto (\psi(i), \psi(j))
  \end{align*}
is a bijection. 
\end{lem}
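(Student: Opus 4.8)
The plan is to show that $\phi$ is well-defined, then surjective, then injective, exploiting that the fibers of the quotient projection $\psi$ are exactly the connected components $V_i$ of the missing forest $\missing{A}$. First I would check that $\phi$ lands in $A/\sim$: if $(i,j)$ is an arc of $A$, then $\{i,j\}$ is an edge of $\SG(A)$, hence is \emph{not} an edge of the missing forest $\missing{A}$; therefore $i$ and $j$ lie in different components, so $\psi(i)\neq\psi(j)$, and by definition of the quotient multigraph $(\psi(i),\psi(j))$ is an arc of $A/\sim$. Surjectivity is immediate from the construction of $A/\sim$ as a contraction: every arc of $A/\sim$ is, by definition, the image of at least one arc of $A$.

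The crux is injectivity, and this is where the tree structure enters. Suppose $(i,j)$ and $(i',j')$ are arcs of $A$ with $\psi(i)=\psi(i')$ and $\psi(j)=\psi(j')$; I must show $(i,j)=(i',j')$. Write $\psi(i)=\psi(i')=a$ and $\psi(j)=\psi(j')=b$, so $i,i'\in V_a$ and $j,j'\in V_b$, and recall $V_a, V_b$ induce connected subtrees $F_a, F_b$ of $T$. Consider the four vertices $i,i',j,j'$ in the tree $T$. Since $\{i,j\}$ and $\{i',j'\}$ are edges of $T$ joining $V_a$ to $V_b$, and $F_a\cup F_b\cup\{\{i,j\}\}$ is connected, a second edge $\{i',j'\}$ between $V_a$ and $V_b$ would create a cycle in $T$ (the path from $i'$ to $i$ inside $F_a$, the edge $\{i,j\}$, the path from $j$ to $j'$ inside $F_b$, and the edge $\{j',i'\}$). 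As $T$ is acyclic, we must have $\{i,j\}=\{i',j'\}$ as edges. It remains to rule out $(i,j)=(j',i')$ with $i=j'$ and $j=i'$; but then $i\in V_a$ and $i=j'\in V_b$ forces $a=b$, contradicting $\psi(i)\neq\psi(j)$ established above. Hence $(i,j)=(i',j')$ as arcs.

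I expect the main obstacle to be purely notational: $\phi$ is defined on \emph{sets} of arcs (without multiplicity), so one must be slightly careful that the abuse of notation flagged before the lemma is consistent — in particular that distinct parallel arcs of $A$ are not an issue here because the statement has already discarded multiplicities, and the real content is that at most one edge of $T$ joins any two fixed components of $\missing{A}$. Once that observation is isolated, well-definedness, surjectivity, and injectivity all follow in a few lines, as sketched above.
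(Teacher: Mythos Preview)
Your proposal is correct and follows essentially the same route as the paper: well-definedness via the observation that endpoints of an arc of $A$ lie in distinct components of $\missing{A}$, surjectivity by construction of the quotient, and injectivity by arguing that two distinct edges of $T$ joining the same pair of components would yield a cycle (the paper phrases this as two distinct paths in $T$ between the same endpoints, which is the same obstruction). Your explicit elimination of the reversed-orientation case $(i,j)=(j',i')$ is a small clarification the paper leaves implicit, but otherwise the arguments coincide.
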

\begin{proof}
    We first address the well-definition. If $(i, j)$ is an arc of $A$, then $i$ and $j$ lie in distinct components of the missing forest $\missing{A}$. Hence $\psi(i) \neq \psi(j)$ and $(\psi(i), \psi(j))$ is an arc of the quotient arrowflow $A/\sim$. Thus $\phi$ is well-defined.

    Surjectivity follows from the definitions. To show injectivity, suppose that $\phi(a) = \phi(b)$ for two arcs $a = (u_a, v_a)$ and $b = (u_b, u_b)$ of $A$:
     \[
    \begin{tikzpicture}[x = 1em, y = 1em]
        \node (v1) at (0,0) {$u_a$};
        \node (v2) at (0,1) {$u_b$};
        \node (v3) at (3,0) {$v_a$};
        \node (v4) at (3,1) {$v_b$};
        \draw[->] (v1) to["$a$"'] (v3);
        \draw[->] (v2) to["$b$"] (v4);
    \end{tikzpicture}
    \]
    Since $\phi(a) = \phi(b)$, the vertices $u_a$ and $u_b$ are mapped to the same vertex under the quotient, so we deduce $u_a$ and $u_b$ belong to the same connected component of $\missing{A}$.
    In particular, there is a path $P(u_a, u_b) \subseteq \missing{A} \subseteq T$.
    Similarly, there is a path $P(v_b, v_a) \subseteq \missing{A} \subseteq T$. But then, there are two paths
    \[
    P(u_a, u_b) ~ b ~ P(v_b, v_a) 
    \quad\text{and}\quad
     a
    \]
    from $u_a$ to $v_a$ in $T$.
    Since $T$ is a tree, this implies $u_a = u_b$ and $v_a = v_b$. We conclude that $\phi$ is a bijection, as claimed.
\end{proof}

If $A$ is a unital arrowflow, then its missing forest is $S$-rooted. That is, the connected components of $\missing{A}$ are indexed by the vertices of $S$. We therefore take $I = S$. Similarly, if $A$ is a composite arrowflow, then its missing forest $\missing{A}$ is $(S,*)$-rooted; we can take $I = S\sqcup\{*\}$, where $*$ denotes the quotient projection of the floating component of $\missing{A}$. Remark that, in both cases, the restriction of the quotient map $\psi:[n]\to I$ to $S$ is the identity map, $\psi|_S = \text{Id}_S$.

\begin{lem}\label{lem: quotient type}
    Let $A$ be an $m$-arrowflow on $T$. Then its quotient arrowflow $A/\sim$ is an $m$-arrowflow on $T/\sim$. Moreover,
    \begin{enumerate}
        \item $A$ has parallel arrows if and only if $A/\sim$ has parallel arrows,
        \item if $A$ is unital (for $S$) then $A/\sim$ is unital (for $S$), and
        \item if $A$ is composite (for $S$) then $A/\sim$ is composite (for $S$).
    \end{enumerate}
\end{lem}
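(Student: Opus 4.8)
The plan is to track the three combinatorial quantities that define the trichotomy—parallel arrows, the number of components of the missing forest, and the distribution of $S$ among those components—across the quotient operation. First I would establish the easy bookkeeping: the quotient multigraph $A/\sim$ has exactly $m$ arcs, since by Lemma \ref{lem: quotient arc bijection} the map $\phi$ is a bijection on arcs (without multiplicity), and contracting the components of $\missing{A}$ neither creates nor destroys arc multiplicities in $A$ itself. Its underlying simple graph is a subgraph of $T/\sim$ by the well-definition argument already given in Lemma \ref{lem: quotient arc bijection} (each arc of $A$ joins two distinct components of $\missing{A}$, hence projects to a genuine edge of $T/\sim$). So $A/\sim$ is an $m$-arrowflow on $T/\sim$, and it remains to verify (1)--(3).

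For (1), I would argue that $\phi$ restricts to a multiplicity-preserving bijection. Two arcs $a \neq b$ of $A$ are \emph{parallel} when $a = b$ as elements of $E^\pm$; since $\phi$ is injective on the arc set, $A$ has a repeated arc if and only if $A/\sim$ does. (One must be slightly careful: parallel arrows in $A$ are genuinely equal arcs, so they are identified under $\phi$ and remain equal—and conversely, two distinct arcs of $A$ cannot become equal in $A/\sim$ by injectivity of $\phi$.) For (2) and (3), the key observation is that $\missing{A/\sim}$, the missing forest of the quotient, equals the quotient of the missing forest—more precisely, $\missing{A/\sim}$ is the discrete graph on the vertex set $I$, i.e.\ it has no edges at all. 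Indeed, an edge of $T/\sim$ is, by construction, the image of some edge of $T$ joining two distinct components $F_i, F_j$ of $\missing{A}$; such an edge cannot lie in $\missing{A}$ (it joins different components), so it lies in $\SG(A)$, hence its image lies in $\SG(A/\sim)$. Thus every edge of $T/\sim$ is covered by $\SG(A/\sim)$, leaving $\missing{A/\sim}$ edgeless. Consequently $\cc{\missing{A/\sim}} = \#I$, and since each singleton component trivially contains exactly the vertex it is, the forest $\missing{A/\sim}$ is $S$-rooted when $I = S$ (case (2)) and $(S,*)$-rooted when $I = S \sqcup \{*\}$ (case (3)), with the singleton $\{*\}$ playing the role of the floating component. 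Combined with (1)—no parallel arrows in $A$ implies none in $A/\sim$—this gives exactly the definitions of unital and composite for $A/\sim$.

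The main obstacle, I expect, is not any single step but making the edge-count arithmetic on $T/\sim$ consistent: one must check that $T/\sim$ is itself a tree on $I$ (so that the identity $\cc{H} = \#I - \edge{H}$ for subforests applies), which follows because contracting connected subgraphs of a tree yields a tree, and that $\edge{T/\sim} = \#I - 1 = m-1$ in the unital case and $m$ in the composite case—matching the arc count of $A/\sim$ and forcing $\SG(A/\sim)$ to be all of $T/\sim$ when there are no parallel arrows. This last point is really the crux: it is what upgrades "$\missing{A/\sim}$ has the right number of components" to "$\missing{A/\sim}$ is edgeless," and hence to the precise structural conclusions of unital/composite. Once this is in place, cases (2) and (3) are immediate, and the lemma follows by appeal to the definitions and to Lemma \ref{lem: arrowflow partition}.
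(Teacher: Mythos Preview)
Your proof is correct and follows essentially the same approach as the paper: both use the arc bijection of Lemma~\ref{lem: quotient arc bijection} to establish that $A/\sim$ is an $m$-arrowflow and to handle (1), and both deduce (2)--(3) by observing that $\missing{A/\sim}$ is edgeless (the paper phrases this as ``every vertex is isolated in $\missing{A/\sim}$''). Your final paragraph about edge-count arithmetic on $T/\sim$ is an unnecessary detour---the direct argument in your second paragraph already shows $\missing{A/\sim}$ is edgeless, which is all that is needed.
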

\begin{proof}
    Let $A$ be an $m$-arrowflow on $T$.
    The vertex sets of both $A/\sim$ and $T/\sim$ coincide. Furthermore, $A/\sim$ has $m$ arcs counted with multiplicity and 
    \[
    \SG(A/\sim) ~=~ \faktorDisplay{\SG(A)}{\sim} ~\subseteq~ \faktor{T}{\sim}\,.
    \]
    Thus, $A/\sim$ is an $m$-arrowflow on $T/\sim$. 

    If $A$ has parallel arrows, so does $A/\sim$. Reciprocally, if $(u,v)$ appears with multiplicity at least two in $A/\sim$, then there are two arcs joining the components $\psi^{-1}(u)$ and $\psi^{-1}(v)$ of $\missing{A}$. As argued in Lemma \ref{lem: quotient arc bijection}, this implies that the arcs are supported on the same edge of $A$, and pointing in the same direction. This shows (1).

    We show (2) and (3) together. If $A$ is unital or composite, the vertex set of  $A/\sim$ is $I = S$ or $I = S \sqcup \{*\}$, respectively. The result follows noting that, in either case, every vertex is isolated in $\missing{A/\sim}$. 
\end{proof}

\begin{ej}
    The arrowflow of Figure \ref{fig: quotient unital c} is unital and it is the quotient arrowflow of the unital arrowflow of Figure \ref{fig: arrowflows c}.
    The arrowflow of Figure \ref{fig: quotient composite c} is composite and it is the quotient arrowflow of the composite arrowflow of Figure \ref{fig: arrowflows d}.
\end{ej}

\begin{figure}[h]
    \centering
    \begin{subfigure}{0.32\textwidth}
        \centering
        \includegraphics[page = 1]{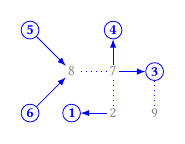}
        \subcaption{An arrowflow $A$.}
        \label{fig: quotient composite a}
    \end{subfigure}
    \begin{subfigure}{0.32\textwidth}
        \centering
        \includegraphics[page = 2]{Figures/quotientComposite.pdf}
        \subcaption{Its missing forest $\missing{A}$.}
        \label{fig: quotient composite b}
    \end{subfigure}
    \begin{subfigure}{0.32\textwidth}
        \centering
        \includegraphics[page = 3]{Figures/quotientComposite.pdf}
        \subcaption{The quotient arrowflow $A/\sim$.}
        \label{fig: quotient composite c}
    \end{subfigure}
    \caption{ }
    \label{fig: quotient composite}
\end{figure}

\begin{lem}
\label{lem:C_S(A) sum equals C_S(A/sim)}
  Let $A$ be either a unital or a composite $m$-arrowflow. There is a sign-preserving bijection between $C_S(A)$ and $C_S(\faktor{A}{\sim})$. In particular, 
  \[
  \sum_{\kappa \in C_S(A)} \sgn(\kappa) = \sum_{\kappa \in C_S(A/\sim)} \sgn(\kappa).
  \]
\end{lem}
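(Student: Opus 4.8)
The plan is to construct the sign-preserving bijection $C_S(A)\to C_S(A/\!\sim)$ explicitly, using the arc bijection $\phi$ of Lemma \ref{lem: quotient arc bijection}. Given an $S$-catalyst $\kappa = (\sigma, f)\in C_S(A)$, I would send it to $(\sigma, \phi\circ f)$. Since $A$ is unital or composite, the discussion after Lemma \ref{lem: quotient arc bijection} shows that the quotient projection $\psi\colon[n]\to I$ restricts to the identity on $S$, so a permutation of $S$ is simultaneously a permutation of the vertex set of $T$ and of the vertex set of $T/\!\sim$; thus $\sigma$ needs no modification and the sign is manifestly preserved. So the content of the proof is entirely in checking that $(\sigma,\phi\circ f)$ is a well-defined $S$-catalyst for $T/\!\sim$ inducing $A/\!\sim$, and that the assignment is invertible.

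The key steps, in order, are as follows. First, I would record that $\phi\circ f\colon S\to E^\pm(T/\!\sim)$ lands in the oriented arcs of the quotient tree: this is immediate from the well-definition part of Lemma \ref{lem: quotient arc bijection}, since $f(i)$ is an arc of $A$ and $\phi$ sends arcs of $A$ to arcs of $A/\!\sim\subseteq T/\!\sim$. Second, the catalyst condition: I must show that $\phi(f(i))$ lies on the path $P(\psi(i),\psi(\sigma(i)))$ in $T/\!\sim$, oriented accordingly. Here I would use that $f(i)$ lies on $P(i,\sigma(i))\subseteq T$, and that contraction along the missing-forest components $F_j$ carries this path onto a path in $T/\!\sim$ (possibly shorter, if the path re-enters a component), with $\psi(i)$ and $\psi(\sigma(i))$ as endpoints; since $f(i)$ joins two distinct components, it survives contraction as the arc $\phi(f(i))$ on the image path, with its orientation intact. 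Third, that the induced $m$-arrowflow of $(\sigma,\phi\circ f)$ is $A/\!\sim$: its arc multiset is $\{\!\{\phi(f(i)) : i\in S\}\!\}$, and by Lemma \ref{lem: quotient arc bijection} together with Lemma \ref{lem: quotient type} this is exactly the arc multiset of $A/\!\sim$. Fourth, invertibility: given $(\tau, g)\in C_S(A/\!\sim)$, define the inverse by $\tau$ (again unchanged, as $\psi|_S = \mathrm{Id}_S$) and $\phi^{-1}\circ g$; one checks symmetrically that $\phi^{-1}(g(i))$ lies on $P(i,\tau(i))\subseteq T$ and that the induced arrowflow is $A$. Finally, the displayed identity $\sum_{\kappa\in C_S(A)}\sgn(\kappa) = \sum_{\kappa\in C_S(A/\sim)}\sgn(\kappa)$ follows because the bijection preserves $\sigma$ and hence $\sgn$.

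The main obstacle I anticipate is the catalyst condition (step two) and its converse in step four — specifically, verifying that $f(i)$ being on $P(i,\sigma(i))$ in $T$ is \emph{equivalent} to $\phi(f(i))$ being on $P(\psi(i),\psi(\sigma(i)))$ in $T/\!\sim$, and not merely implied by it. The forward direction is the natural functoriality of path contraction, but for the inverse map I need that an arc of $A/\!\sim$ on the quotient path lifts back to an arc of $A$ on the path in $T$; this requires knowing that the preimage in $T$ of the quotient path $P(\psi(i),\psi(\sigma(i)))$, although not itself a path, contains the genuine path $P(i,\sigma(i))$, and that the single edge of $T$ underlying $\phi^{-1}(g(i))$ lies on that genuine path. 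Since $T$ is a tree and each missing-forest component is connected, the path $P(i,\sigma(i))$ alternates between within-component segments and cross-component edges, and its image under $\psi$ is precisely $P(\psi(i),\psi(\sigma(i)))$ with the within-component segments collapsed; I would use this structural description to pin down both directions cleanly. The orientation bookkeeping is routine once this is in place.
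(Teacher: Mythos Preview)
Your proposal is correct and follows exactly the same construction as the paper: the bijection is $(\sigma,f)\mapsto(\sigma,\phi\circ f)$ with inverse $(\tau,g)\mapsto(\tau,\phi^{-1}\circ g)$, and sign-preservation is immediate since $\sigma$ is unchanged. In fact you supply more detail than the paper does---the paper's proof simply asserts well-definition and invertibility without discussing the catalyst condition or the path-contraction argument you outline in steps two and four.
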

\begin{proof}
Let $\phi$ be the bijection of Lemma \ref{lem: quotient arc bijection}.
Given a $S$-catalyst $(\sigma, f)$ of $C_S(A)$, we construct an $S$-catalyst $(\Phi\sigma, \Phi f)$ in the catalyst class $C_S(\faktor{A}{\sim})$ by letting $\Phi\sigma = \sigma$ and $\Phi f = \phi \circ f$. 
The map $(\sigma, f) \mapsto (\Phi \sigma, \Phi f)$ has an inverse $(\tau, g)\mapsto(\Psi\tau, \Psi g)$ and is given by  $\Psi\tau := \tau$ and $\Psi g := \phi^{-1}\circ g$. Thus, it is a bijection. That it is sign-preserving is immediate. 
\end{proof}

\section{Route maps}
\label{sec:route maps}

To provide combinatorial proofs of Propositions \ref{prop:unital} and \ref{prop:composite}, we transform our $S$-catalyst enumeration problem into a path enumeration problem in a network that we call the route map. This latter class of problems are well-studied and is of extreme beauty. The cornerstone of this field lies in the following lemma.

\begin{lem}[LGV lemma, \cite{Lindstrom} \cite{Gessel-Viennot}]
    \label{lem: LGV}
For any  network $\mathcal{G}$,
there exists a sign-reversing involution on the set of all families of network paths of $\mathcal{G}$ such that
\begin{enumerate}
\item its fixed points are the non-intersecting families of network paths, and
\item it stabilizes the multiset of steps of the families of network paths.
\end{enumerate}
\end{lem}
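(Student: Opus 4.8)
The plan is to prove the lemma by exhibiting the classical Lindström--Gessel--Viennot tail-swapping involution. Recall that a family of network paths is a tuple $\mathbf P = (P_1,\dots,P_k)$ in which $P_\ell$ runs from the $\ell$-th source to the $\pi(\ell)$-th sink for some permutation $\pi = \pi(\mathbf P)$, that $\sgn(\mathbf P) := \sgn(\pi(\mathbf P))$, and that $\mathbf P$ is non-intersecting when the $P_\ell$ are pairwise vertex-disjoint. On non-intersecting families the involution $\Phi$ will act as the identity, so the entire content is to define $\Phi$ on intersecting families and check it behaves as claimed.

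First I would single out canonical intersection data. Given an intersecting family $\mathbf P$, let $i$ be the least index such that $P_i$ meets some other path; travelling along $P_i$ from its source, let $v$ be the first vertex of $P_i$ that also lies on another path; and let $j$ be the least index with $j\neq i$ and $v\in P_j$. Define $\Phi(\mathbf P)$ by replacing $P_i$ with the path $P_i'$ that follows $P_i$ from its source up to $v$ and then follows $P_j$ from $v$ to its sink, and replacing $P_j$ with $P_j'$ that follows $P_j$ from its source up to $v$ and then follows $P_i$ from $v$ onward; every other path is left untouched. Acyclicity of the network guarantees that $P_i'$ and $P_j'$ are again honest paths visiting no vertex twice.

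Then I would verify the properties. That $\Phi$ reverses sign is immediate: the new family realizes the permutation $\pi\circ(i\ j)$, of sign $-\sgn(\pi)$; in particular $\Phi(\mathbf P)\neq\mathbf P$ (since $P_i'$ ends at a different sink than $P_i$), so the fixed points of $\Phi$ are exactly the non-intersecting families, which is (1). That $\Phi$ stabilizes the multiset of steps is also immediate: $P_i'$ and $P_j'$ are assembled from the very same four path-segments as $P_i$ and $P_j$ --- the portions of $P_i$ and of $P_j$ before $v$, and those after $v$ --- merely recombined, so the multiset of all edges used by the family is unchanged, giving (2). The delicate point is that $\Phi$ is an involution. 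Writing $\mathbf P' = \Phi(\mathbf P)$, I must show the triple $(i,v,j)$ computed from $\mathbf P'$ coincides with the one computed from $\mathbf P$. Since $P_1,\dots,P_{i-1}$ are unchanged and $V(P_i)\cup V(P_j)=V(P_i')\cup V(P_j')$, none of the first $i-1$ paths meets another path in $\mathbf P'$, so $i$ is still least; the segment of $P_i'$ up to $v$ equals that of $P_i$, and by minimality of $v$ none of its earlier vertices lies on any other path of $\mathbf P$, hence on none of $\mathbf P'$, so $v$ is still the first intersection vertex on $P_i'$; and the set of indices $\ell\neq i$ with $v\in P_\ell'$ equals the corresponding set for $\mathbf P$, so $j$ is still least. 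With the data preserved, re-swapping the tails of $P_i'$ and $P_j'$ at $v$ recovers $P_i$ and $P_j$, so $\Phi\circ\Phi=\mathrm{id}$.

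I expect the invariance of the canonical triple $(i,v,j)$ under the swap --- the one step that actually makes $\Phi$ an involution --- to be the only subtle part; everything else is routine bookkeeping once the network is known to be acyclic.
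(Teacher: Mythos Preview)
Your proposal is correct and is precisely the classical Lindstr\"om--Gessel--Viennot tail-swapping argument; the paper itself does not prove the lemma but omits the proof with a pointer to the standard references (and \emph{Proofs from THE BOOK}), which contain exactly the construction you wrote out. In other words, you have supplied the very proof the paper defers to.
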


We introduce the necessary concepts to understand this lemma below.
The proof is elegant and well-known, even featured in \textit{Proofs from \textsc{the book}} \cite{theBook}, and so we omit it. We remark it is bijective.

Then, we recall the definition of the route map of an arrowflow introduced in \cite{FPSAC, BEGLR}. The definition is involved and studying its properties represents the bulk of said article. In this section, we introduce the definition and collect its main properties, referring to \cite{BEGLR} for many of the proofs.

\subsection{Generalities about networks}
\begin{de}[Network]
    A \emph{network} is a tuple
    \(
    \big(
    \mathcal{G},
    \Delta,
    \nabla
    \big)
    \)
    consisting of an acyclic digraph $\mathcal{G}$ and two sets of distinguished nodes of $\mathcal{G}$ such that $\#\Delta = \#\nabla$. We order these sets, 
    $\Delta = (i_1, ..., i_n)$ and $\nabla = (j_1, ..., j_n)$. We say each node in $\Delta$ is a \emph{source} and each node in $\nabla$ is a \emph{sink} of the network.
\end{de}

A \emph{network path} in $\mathcal{G}$ is a path from a source $i\in\Delta$ to a sink $j \in \nabla$. A \emph{family of network paths} in $\mathcal{G}$ is a tuple $\Lambda = (\Lambda_i)_{i\in \Delta}$ of network paths, each starting from a source $i\in\Delta$ and ending on a distinct sink. A family of network paths is non-intersecting if paths are pairwise vertex-disjoint. 
The \emph{underlying map of a family $\Lambda$} is the bijection $\sigma : \Delta \to
\nabla$ such that $\Lambda_i$ is a network path from $i$ to $\sigma(i)$ for all $i \in \Delta$. When there is a canonical identification of $\Delta$ and $\nabla$, we refer to this as the underlying permutation, and identify it with an element of $\Sym_\Delta$. 
In this context, we define the sign of $\Lambda$ as the sign of $\sigma\in\Sym_\Delta$.

We let \emph{$\FNP(\mathcal{G})$} denote the set of families of network paths of $\mathcal{G}$.

 We will need a notion of composition of maps $\sigma$ and $\tau$ with different domains and co-domains: let $\sigma : \Delta \to \nabla$ and $\tau : \Delta' \to \nabla'$ be two maps and define $\tau \circ \sigma : \Delta \cup (\Delta'\setminus\nabla) \to (\nabla \setminus \Delta') \cup \nabla'$ to be the map given by
\[
\tau \circ \sigma (x) = \begin{cases}
    \tau(\sigma(x)) & \text{if }x\in \Delta ~\text{and}~ \sigma(x)\in \Delta',\\
    \sigma(x) & \text{if }x\in \Delta ~\text{and}~ \sigma(x)\not\in \Delta',\\
    \tau(x) & \text{if }x\not\in \Delta ~\text{and}~ x\in \Delta'.\\
\end{cases}
\]
This can be understood diagrammatically through the next easy example.
\begin{ej}
    Let $\sigma : \{i_1,i_2\}\to\{j_1,j_2\}$ map $i_1\mapsto j_1$ and $i_2\mapsto j_2$. Let $\tau : \{j_2,i_2'\}\to\{j_1',j_2'\}$ map $j_2\mapsto j_1'$ and $i_2\mapsto j_2'$. Then, the composition $\tau\circ\sigma$ is the following:
\[
    \scriptsize
\begin{tikzpicture}[baseline = -6em, x = 1.5em, y = 2.5em, yscale = -1]
    \node (i1) at (1,3) {};
    \node (i2) at (2,3) {};
    \node (j1) at (1,2) {};
    \node (j2) at (2,2) {};
    \node (i2') at (3,2) {};
    \node (j1') at (2,1) {};
    \node (j2') at (3,1) {};
    
    \filldraw (i1) circle (1.5pt) node[below] {$i_1$};
    \filldraw (i2) circle (1.5pt) node[below] {$i_2$};
    \filldraw (j1) circle (1.5pt) node[left] {$j_1$};
    \filldraw (j2) node {$j_2$};
    \filldraw (i2') circle (1.5pt) node[right] {$i_2'$};
    \filldraw (j1') circle (1.5pt) node[above] {$j_1'$};
    \filldraw (j2') circle (1.5pt) node[above] {$j_2'$};

    \begin{scope}[semithick, ->]
        \draw (i1) -- (j1);
        \draw[shorten >= 0.4pt] (i2) -- (j2);
        \draw[shorten <= 0.4pt] (j2) -- (j1');
        \draw (i2') -- (j2');
    \end{scope}
    \end{tikzpicture}
    \qquad = \qquad
\begin{tikzpicture}[baseline = -6em, x = 1.5em, y = 2.5em, yscale = -1]
    \node (i1) at (1,3) {};
    \node (i2) at (2,3) {};
    \node (j1) at (1,1) {};
    \node (i2') at (3,3) {};
    \node (j1') at (2,1) {};
    \node (j2') at (3,1) {};
    
    \filldraw (i1) circle (1.5pt) node[below] {$i_1$};
    \filldraw (i2) circle (1.5pt) node[below] {$i_2$};
    \filldraw (i2') circle (1.5pt) node[below] {$i_2'$};
    \filldraw (j1) circle (1.5pt) node[above] {$j_1$};
    \filldraw (j1') circle (1.5pt) node[above] {$j_1'$};
    \filldraw (j2') circle (1.5pt) node[above] {$j_2'$};

    \begin{scope}[semithick, ->]
        \draw (i1) -- (j1);
        \draw (i2) -- (j1');
        \draw (i2') -- (j2');
    \end{scope}
    \end{tikzpicture}
\]
\end{ej}

So far, we have been using the words sinks and sources in the context of networks, to refer to distinguished points of a graph. In some applications, however, it is useful to require these to be graph-theoretic sinks and sources: a node of a digraph is a \emph{graph-theoretic sink} if it only has incoming arcs, and a \emph{graph-theoretic source} if it only has outgoing arcs.
\begin{lem}\label{lem: union of networks}
     Let $(\mathcal{G}, \Delta, \nabla)$ and $(\mathcal{G}', \Delta', \nabla')$ be two networks. Suppose that $\mathcal{G}\cap\mathcal{G}' = \nabla \cap \Delta'$ and that each node in $\nabla\cap\Delta'$ is a graph-theoretic sink of $\mathcal{G}$ and a graph-theoretic source of $\mathcal{G}'$. Then, 
     \[
     \mathcal{G}\doubleplus\mathcal{G}' :=
     \Big(\mathcal{G}\cup\mathcal{G}', \quad
     \Delta \cup (\Delta' \setminus \nabla),\quad
     (\nabla \setminus \Delta') \cup \nabla'\Big)
     \]
     is a network and
     \(
     \FNP( \mathcal{G}\doubleplus\mathcal{G}' ) = \FNP( \mathcal{G})\times \FNP( \mathcal{G}' ).
     \)
     Moreover, the underlying map of $(\Lambda,\Lambda')$ is $\sigma_{(\Lambda,\Lambda')} = \sigma_{\Lambda'}\circ \sigma_{\Lambda}$.
\end{lem}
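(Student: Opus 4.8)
The plan is to verify the two claims in turn: first that $\mathcal{G}\doubleplus\mathcal{G}'$ is a genuine network with the prescribed source and sink sets, and then that its families of network paths decompose as a product, with the stated behaviour on underlying maps. For the first claim, I would check that $\mathcal{G}\cup\mathcal{G}'$ is acyclic: any directed cycle in the union would have to pass between $\mathcal{G}$ and $\mathcal{G}'$, but such a crossing can only happen at a node of $\nabla\cap\Delta'$, and since each such node is a graph-theoretic sink of $\mathcal{G}$ it cannot be left along an arc of $\mathcal{G}$, while as a graph-theoretic source of $\mathcal{G}'$ it cannot be entered along an arc of $\mathcal{G}'$; so once a putative cycle enters $\mathcal{G}'$ it can never return to $\mathcal{G}$, contradicting that it closes up (or it stays entirely within one of the two acyclic graphs). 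Then $\#\big(\Delta \cup (\Delta'\setminus\nabla)\big) = \#\big((\nabla\setminus\Delta')\cup\nabla'\big)$ follows from $\#\Delta = \#\nabla$ and $\#\Delta' = \#\nabla'$ together with $\Delta\cap(\Delta'\setminus\nabla)=\varnothing$ (as $\mathcal{G}\cap\mathcal{G}' = \nabla\cap\Delta'$ forces $\Delta$ to meet $\Delta'$ only inside $\nabla$) and the analogous disjointness on the sink side.

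For the product decomposition, I would define the map $\FNP(\mathcal{G})\times\FNP(\mathcal{G}')\to\FNP(\mathcal{G}\doubleplus\mathcal{G}')$ by concatenation: given $\Lambda\in\FNP(\mathcal{G})$ with underlying $\sigma_\Lambda:\Delta\to\nabla$ and $\Lambda'\in\FNP(\mathcal{G}')$ with underlying $\sigma_{\Lambda'}:\Delta'\to\nabla'$, a source $x$ of the union lies in $\Delta$ or in $\Delta'\setminus\nabla$; in the first case follow $\Lambda_x$ to $\sigma_\Lambda(x)\in\nabla$, and if $\sigma_\Lambda(x)\in\Delta'$ continue along $\Lambda'_{\sigma_\Lambda(x)}$, in the second case follow $\Lambda'_x$. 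This is well defined precisely because paths in $\mathcal{G}$ can only reach $\mathcal{G}'$ through $\nabla\cap\Delta'$ and vice versa, and the concatenation point is a legitimate internal vertex of the resulting path (here the graph-theoretic sink/source hypothesis is used again, to ensure a path of $\mathcal{G}$ ending at such a node and a path of $\mathcal{G}'$ starting there glue into a single directed path, and that no path genuinely crosses more than once). Conversely, any network path of the union, being a directed path in an acyclic graph, visits $\nabla\cap\Delta'$ at most once and so splits uniquely into an initial segment in $\mathcal{G}$ and a terminal segment in $\mathcal{G}'$ (one of which may be empty), giving the inverse map; compatibility of these splittings across a whole family, plus the distinctness-of-sinks condition, shows the correspondence is a bijection of families. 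Tracking a source $x$ through the concatenation shows its image is exactly $\sigma_{\Lambda'}\circ\sigma_\Lambda(x)$ in the three cases of the composition defined just before the lemma, which is the final assertion.

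The main obstacle I expect is the careful bookkeeping in the bijection: making precise that a path in $\mathcal{G}\cup\mathcal{G}'$ cannot oscillate between the two pieces (so the split into a $\mathcal{G}$-part and a $\mathcal{G}'$-part is unique), and checking that the sink reached in the union is distinct across the family so that the concatenated tuple is a bona fide family of network paths with a well-defined underlying bijection. Both points reduce to the acyclicity established in the first step together with the hypothesis that the shared nodes are sinks of $\mathcal{G}$ and sources of $\mathcal{G}'$; once these are cleanly stated the sign statement is immediate from multiplicativity of the sign under the composition $\sigma_{\Lambda'}\circ\sigma_\Lambda$.
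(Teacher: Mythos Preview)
Your plan is correct and follows essentially the same approach as the paper's proof: acyclicity from the sink/source gluing hypothesis, equinumeracy of the new source and sink sets, and then splitting each network path of the union uniquely at its (at most one) visit to $\nabla\cap\Delta'$ to obtain the bijection with $\FNP(\mathcal{G})\times\FNP(\mathcal{G}')$, with the underlying map falling out by definition of the generalized composition. Your outline is in fact considerably more careful than the paper's terse argument, which dispatches each point in a single sentence.
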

We call this the \emph{concatenation} of networks. (Beware: the operation is not symmetric on $\mathcal{G}$ and $\mathcal{G}'$.)
\begin{proof}
    The graph $\mathcal{G}\cup\mathcal{G}'$ is acyclic, since it is the union of acyclic graphs by graph-theoretic sinks of one and graph-theoretic sources of another. That the new sets of sources and sinks are equinumerous is immediate.

    A network path of the concatenation starts either in $\Delta$ or in $\Delta' \setminus \nabla$. If it starts in $\Delta' \setminus \nabla$, then it must end in $\mathcal{G}'$, since it cannot cross back to $\mathcal{G}$. If it starts in $\Delta$, then it must visit a node of $\nabla$, since these separate $\Delta$ from $\nabla'$. Hence every network path splits uniquely as a pair of network paths of $\mathcal{G}$ and $\mathcal{G}'$.
    
    Finally, the underlying map is the composition of the factors, by definition.
\end{proof}

\subsection{Plane rooted directed trees}

Throughout this section, let $T_* = (V\cup\{*\},E_*)$ be a rooted (simple) tree with root $*$ and let $(V\cup\{*\}, A_*)$ be a rooted \emph{directed} tree supported on $T_*$. 
We refer to this latter by $A_*$ by abuse of notation.
We assume $\SG(A_*) = T_*$.

\begin{de}
    Let $x\in V$ be a node with parent $X\in V\cup\{*\}$ in $T_*$. We say that $x$ is an \emph{ascending child} of $X$ if $(x,X) \in A_*$.  
    Otherwise, we have $(X,x)\in A_*$ and we say $x$ is a \emph{descending child}.
\end{de}
For $X\in V\cup\{*\}$ let \emph{$\Asc(X)$} be the set of ascending children of $X$ and let \emph{$\Des(X)$} be the set descending children.
If $x$ is a node with parent $X$, then the set of neighbors of $x$ is $\partial x = \Asc(x) \cup \Des(x) \cup \{X\}$.
We denote by $T_X$ the subtree of $T_*$ formed by $X$ and all of its descendants, and define $A_X$ similarly. 
\begin{de}
    The \emph{plane rooted directed tree structure on $T_*$ induced by $A_*$} is the tuple
    \[
    \Big(A_*, (<_x)_{x\in V\cup\{*\}}\Big)
    \]
    where for each $x\in V\cup\{*\}$ we have a total order $<_x$ on the set of neighbors $\partial x$ such that any ascending child precedes any descending child. That is, such that
    \begin{itemize}
        \item $y <_x z$ for each $y\in\Asc(x)$ and $z\in\Des(x)$, and
        \item $y <_x X$ if $X$ is the parent of $x$, for all $y \in \Asc(x)\cup\Des(x)$.
    \end{itemize}
    (Note that this second condition is void for the root $x = *$.)
    By abuse of notation, we denote this structure by $T_*$. See Figure \ref{fig: plane rooted tree} for an example.
\end{de}

Hereafter, we write $(\partial x, <_x)$ for the totally ordered poset defined by the plane rooted directed tree structure on the set of neighbors $\partial x$, for $x\in V\cup\{*\}$.

\subsection{Hemisphere of a plane rooted directed tree} 
A \emph{hemisphere $\mathcal{H}(T_*)$} is a digraph constructed from a plane rooted tree $T_*$. There are three types of nodes in $\mathcal{H}(T_*)$:  \emph{$v$-nodes}, coming from the vertices of $T_*$;   \emph{$e$-nodes}, coming from oriented edges supported on $T_*$, and   \emph{$s$-nodes}, that represent the oriented sectors of $T_*$.
See Figure \ref{fig: route map}.
 Explicitly, the set of nodes of $\mathcal{H}(T_*)$ consists of:
\begin{enumerate}
    \item one $v$-node $\Delta_i$ for each vertex $i\in V$.
\item two $e$-nodes $e(i,j)$ and $e(j,i)$ for every edge $\{i, j\} \in E_*$.
    \item two sequences of $s$-nodes 
    \begin{align*}s_i(j_1, j_2), s_i(j_2, j_3), \ldots, s_i(j_{m-1}, j_m) \qquad \text{and} \\
    s_i(j_m, j_{m-1}), s_i(j_{m-1}, j_{m-2}), \ldots, s_i(j_2, j_1)
    \end{align*}for each vertex $i \in V\cup\{*\}$, where $(\partial i, <_i) = (j_1 <_i j_2 <_i \cdots <_i j_m)$ are the neighbors of $i$.
\end{enumerate} 
\begin{note}
    The most attentive reader might have spotted that the definitions here presented and those of \cite{BEGLR, FPSAC} vary slightly. In both cases, ``vestigial'' nodes are included in the definition, which have been removed here. For instance, the construction of \cite{FPSAC} has a node $\Delta_*$ that is \emph{not} a source of the network. These play no role in the constructions, and thus the networks are interchangeable.
\end{note}

   \begin{figure}[h] \tiny
\begin{subfigure}[t]{0.24\textwidth}
\includegraphics[page = 1, width=.9\textwidth]{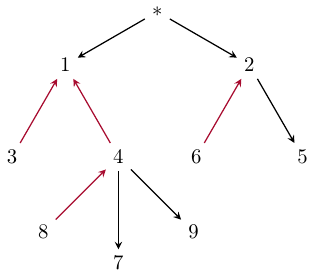}
\caption{A plane rooted tree $T_*$}
\label{fig: plane rooted tree}
\end{subfigure}
\begin{subfigure}[t]{0.24\textwidth}
\includegraphics[page = 2, width=.9\textwidth]{Figures/hemisphere.pdf}
\caption{$v$-nodes of $\mathcal{H}(T_*)$}
\label{fig: route map A}
\end{subfigure}
\hfill
\begin{subfigure}[t]{0.24\textwidth}
\includegraphics[page = 3, width=.9\textwidth]{Figures/hemisphere.pdf}
\caption{$e$-nodes of $\mathcal{H}(T_*)$}
\label{fig: route map B}
\end{subfigure}
\hfill
\begin{subfigure}[t]{0.24\textwidth}
\includegraphics[page = 4, width=.9\textwidth]{Figures/hemisphere.pdf}
\caption{$s$-nodes of $\mathcal{H}(T_*)$}
\label{fig: route map C}
\end{subfigure}
\caption{Nodes of $\mathcal{H}(T_*)$.}
    \label{fig: route map}
\end{figure}

The arcs of $\mathcal{H}(T_*)$ are better described in terms of local digraphs $\Gamma(i)$, one for each vertex $i\in V\cup\{*\}$.  If the neighbors of $i$ are  $(\partial i, <_i) = (j_1 <_i j_2 <_i \cdots <_i j_m)$, then refer to Figure \ref{Gamma i} for an illustration of $\Gamma(i)$. In order to obtain $\Gamma(*)$, let $i = *$ in Figure \ref{Gamma i} and remove $\Delta_*$.

\begin{figure}
\[\scriptsize\begin{tikzcd}[cramped, column sep = tiny] 
  {\Delta_i}
  &  &  {s_i(j_1,j_2)}
  &  &  {s_i(j_2,j_3)}  
  &  &  {s_i(j_3, j_4)}
  &  &  {s_i(j_4, j_5)}
  &  \\
  & e(j_1, i) 
  &  & e(j_2, i) 
  &  & e(j_3, i) 
  &  & e(j_4, i) 
  &  & e(j_5, i) 
  \\
  & \textcolor{gray}{
  e(i, j_1) 
  }
  &  &  \textcolor{gray}{
 e(i, j_2) 
  }
  &  &  \textcolor{gray}{
  e(i, j_3) 
  }
  &  &  \textcolor{gray}{
  e(i, j_4) 
  }
  &  &  \textcolor{gray}{
  e(i, j_5) 
  }
  \\
  &  &  {s_i(j_2,j_1)}
  &  &  {s_i(j_3,j_2)}
  &  &  {s_i(j_4, j_3)}
  &  &  {s_i(j_5, j_4)}
  &
  \arrow[from=1-1, to=3-2, bend right, gray]
  \arrow[from=1-1, to=1-3]
  \arrow[from=1-3, to=1-5]
  \arrow[from=1-5, to=1-7]
  \arrow[from=1-7, to=1-9]
  %
  \arrow[from=2-2, to=1-3]
  \arrow[from=2-4, to=1-5]
  \arrow[from=2-6, to=1-7]
  \arrow[from=2-8, to=1-9]
  %
  \arrow[to=3-4, from=1-3, bend right, gray]
  \arrow[to=3-6, from=1-5, bend right, gray]
  \arrow[to=3-8, from=1-7, bend right, gray]
   \arrow[to=3-10, from=1-9, bend right, gray]
    %
  \arrow[from=2-4, to=4-3, crossing over, bend right]
  \arrow[from=2-6, to=4-5, crossing over, bend right]
  \arrow[from=2-8, to=4-7, crossing over, bend right]
  \arrow[from=2-10, to=4-9, crossing over, bend right]
  %
  \arrow[to=3-2, from=4-3, gray]
  \arrow[to=3-4, from=4-5, gray]
  \arrow[to=3-6, from=4-7, gray]
  \arrow[to=3-8, from=4-9, gray]
   %
  \arrow[to=4-3, from=4-5]
  \arrow[to=4-5, from=4-7]
  \arrow[to=4-7, from=4-9]
\end{tikzcd}
\]

\caption{Digraph $\Gamma(i)$ for a vertex  $i\in V$ and with neighbors $(\partial i, <_i) = (j_1 <_i j_2 <_i j_3 <_i j_4 <_i j_5)$.}
\label{Gamma i}
\end{figure}
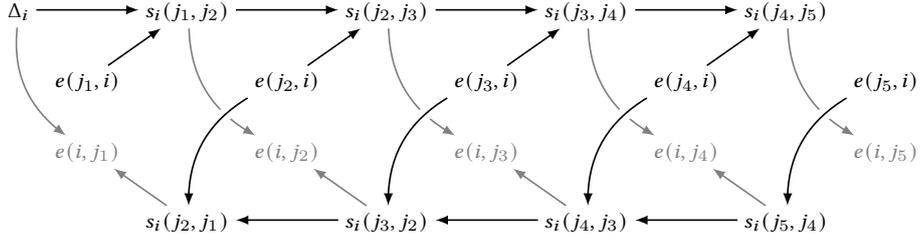

A key property of the local graphs  is that the local digraphs $\Gamma(i)$ and $\Gamma(j)$  intersect if and only if $i$ and $j$ are neighbors in $T_*$. In this situation, the  intersection uniquely consists of the pair of nodes ${e(i,j), e(j, i)}$.

\begin{p}
    The hemisphere is a network
    \(
    \big(
    \mathcal{H}(T_*), (\Delta_i)_{i\in V}, (e(a))_{a\in A_*}
    \big).
    \)
\end{p}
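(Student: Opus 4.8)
The plan is to verify the two requirements in the definition of a network: that the digraph $\mathcal{H}(T_*)$ is acyclic, and that its distinguished source and sink sets are equinumerous. The second point is immediate and I would dispatch it first: the sources are the $v$-nodes $\Delta_i$, one per vertex $i\in V$, so $\#\Delta=\#V$, while the sinks are the $e$-nodes $e(a)$, one per arc $a\in A_*$; since $\SG(A_*)=T_*$ is a tree on the vertex set $V\cup\{*\}$, it has $\edge{T_*}=\#V$ edges and hence $\#A_*=\#V$ arcs, so $\#\nabla=\#V=\#\Delta$. (The two sets are disjoint, as a $v$-node is never an $e$-node, so they are genuine distinguished node sets.)

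The real content is acyclicity, and I would organise it around the local digraphs $\Gamma(i)$, $i\in V\cup\{*\}$, whose arc sets cover that of $\mathcal{H}(T_*)$. First I would observe, by inspecting Figure \ref{Gamma i}, that every arc of $\Gamma(i)$ is incident either to $\Delta_i$ or to one of the $s$-nodes $s_i(\cdot,\cdot)$; since those nodes occur in no other local digraph, the intersection property of the $\Gamma(i)$'s stated above upgrades to the following: each arc of $\mathcal{H}(T_*)$ belongs to exactly one $\Gamma(i)$, and distinct local digraphs meet only in the node pairs $\{e(i,j),e(j,i)\}$ along edges of $T_*$. Next I would check that each $\Gamma(i)$ is itself acyclic — and likewise $\Gamma(*)$, obtained by deleting the source $\Delta_*$ and its outgoing arcs — by reading off Figure \ref{Gamma i} an explicit topological order (the nodes $\Delta_i$ and $e(j_k,i)$ are sources; then come the two $s$-chains, each in the direction its arrows run; the nodes $e(i,j_k)$ are last). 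This reduces the problem to excluding a directed cycle spread over at least two local digraphs, and any such cycle must pass through the shared $e$-nodes.

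The crux — and the step I expect to take the most care — is the following local fact, to be extracted from Figure \ref{Gamma i}: inside $\Gamma(i)$, every node $e(j_k,i)$ has only outgoing arcs and every node $e(i,j_l)$ has only incoming arcs, and there is a directed path inside $\Gamma(i)$ from $e(j_k,i)$ to $e(i,j_l)$ if and only if $l\neq k$ (the two $s$-chains let one reach every neighbour of $i$ \emph{except} the one one came from, and never let one ``turn back''). Granting this, a directed path of $\mathcal{H}(T_*)$ enters a local digraph $\Gamma(i)$ at some node $e(j,i)$, having come from $\Gamma(j)$, and must leave it at some node $e(i,j')$ with $j'\neq j$, continuing into $\Gamma(j')$. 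Hence if $C$ were a directed cycle, not contained in any single $\Gamma(i)$ by the previous paragraph, its sequence of shared $e$-nodes would read $e(a_1,a_2),e(a_2,a_3),\dots,e(a_N,a_1)$ with consecutive $a$'s adjacent in $T_*$ and $a_{t+2}\neq a_t$ for all $t$ (indices cyclic); that is, $(a_1,a_2,\dots,a_N,a_1)$ would be a non-backtracking closed walk in the tree $T_*$. Since a non-backtracking walk in a tree is a geodesic path and cannot close up, this is impossible, and $\mathcal{H}(T_*)$ is acyclic. Much of the bookkeeping behind the local fact, and the precise combinatorics of the $s$-chains, is already carried out in \cite{BEGLR}, which I would cite for the details.
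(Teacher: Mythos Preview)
Your proposal is correct and matches the paper's approach. The paper's own proof is even terser than yours: it simply notes $\#V = \#A_*$ for the equinumerosity and cites \cite[Prop.~6.6]{BEGLR} outright for acyclicity, whereas you have unpacked (accurately) the non-backtracking-walk argument that presumably lies behind that citation.
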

\begin{proof}
    Note that $\#V = \# A_*$.
    That $\mathcal{H}(T_*)$ is acyclic is \cite[Prop.~6.6]{BEGLR}.
\end{proof}

\subsection{The route map}
The \emph{route map $\mathcal{R}$} of a plane rooted directed tree $A_*$ is created from two hemispheres. The \emph{Southern hemisphere} is \emph{$\mathcal{S}$} $:= \mathcal{H}(T_*)$. 

Let $A_*' = \{(j,i) \ : \ (i,j)\in A_*\}$  be the mirror of $A_*$ and define the mirror local order $<'_x$ by letting $y <'_x z$ if and only if $z <_x y$, for each $x\in V\cup\{*\}$.
Let $T'_* = \big(A_*', (<'_x)_{x\in V\cup\{*\}}\big)$ be the plane rooted directed tree structure induced by $A_*'$, and let $\mathcal{H}(T'_*)$ be its hemisphere. The \emph{Northern hemisphere $\mathcal{N}$} is created from $\mathcal{H}(T'_*)$ by reversing each arc in $\mathcal{H}(T'_*)$ and relabeling each node $x$ by $\Psi(x)$:
    \begin{align*}\Psi(\Delta_i) = \nabla_i,\
    \Psi(e(i, j))=e'(j,i),\
    \Psi(s_i(u, v))= s_i'(v, u).
    \end{align*}
We define a network $\mathcal{B}$ whose vertex set is $\{e(a)~:~a\in A_*\}\cup\{e'(a)~:~a\in A_*\}$, and with $n$ arcs $(e(a), e'(a))$, one for each $a$ in $A_*$. We let $e(a)$ be a source and $e'(a)$ be a sink for each $a\in A_*$.
The route map $\mathcal{R}$ is defined as the concatenation
\[
\mathcal{S} \doubleplus \mathcal{B} \doubleplus \mathcal{N}.
\]
The arcs of $\mathcal{B}$ are referred to as the \emph{bridges between hemispheres} of $\mathcal{R}$. Note that there is a unique family of network paths in $\mathcal{B}$, and that its underlying map is $\iota: e(a)\mapsto e'(a)$ for all $a\in A_*$.

\begin{cor}
    The route map inherits a network structure
    \(
    \big(
    \mathcal{R}, (\Delta_i)_{i\in V}, (\nabla_i)_{i\in V}
    \big).
    \)
\end{cor}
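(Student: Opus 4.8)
The proof will consist of two applications of Lemma \ref{lem: union of networks}. Writing the route map as $\mathcal{R} = \mathcal{S}\doubleplus\mathcal{B}\doubleplus\mathcal{N}$, I would first form the concatenation $\mathcal{S}\doubleplus\mathcal{B}$, check it is a well-defined network, and then concatenate the result with $\mathcal{N}$. Before that, I would record the source/sink data of the three pieces. By the Proposition above, $\mathcal{S}=\mathcal{H}(T_*)$ is a network with sources $(\Delta_i)_{i\in V}$ and sinks $(e(a))_{a\in A_*}$. By its definition, $\mathcal{B}$ is a network with sources $(e(a))_{a\in A_*}$ and sinks $(e'(a))_{a\in A_*}$, consisting solely of the bridge arcs $(e(a),e'(a))$, one per $a\in A_*$. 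Finally, $\mathcal{N}$ is obtained from $\mathcal{H}(T'_*)$ by reversing every arc and relabeling via $\Psi$; since $\mathcal{H}(T'_*)$ has sources $(\Delta_i)_{i\in V}$ and sinks $(e(a'))_{a'\in A'_*}$, reversal turns these into sinks and sources respectively, and $\Psi$ renames them $(\nabla_i)_{i\in V}$ and $(e'(a))_{a\in A_*}$. So $\mathcal{N}$ is a network with sources $(e'(a))_{a\in A_*}$ and sinks $(\nabla_i)_{i\in V}$.

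Next I would verify the two hypotheses of Lemma \ref{lem: union of networks} for gluing $\mathcal{S}$ and $\mathcal{B}$. The vertex set of $\mathcal{S}$ contains only unprimed $v$-, $e$-, and $s$-nodes, while the vertex set of $\mathcal{B}$ is $\{e(a):a\in A_*\}\cup\{e'(a):a\in A_*\}$; hence $\mathcal{S}\cap\mathcal{B} = \{e(a):a\in A_*\}$, which is exactly $\nabla_{\mathcal{S}}\cap\Delta_{\mathcal{B}}$. Moreover, for each $a\in A_*$ the node $e(a)$ is a graph-theoretic source of $\mathcal{B}$ (its only incident arc there is the outgoing bridge), and it is a graph-theoretic sink of $\mathcal{S}$ (a property of the hemisphere read off from the explicit arc-structure of the local digraphs $\Gamma(i)$; see \cite{BEGLR}). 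Lemma \ref{lem: union of networks} then gives that $\mathcal{S}\doubleplus\mathcal{B}$ is a network with sources $(\Delta_i)_{i\in V}$ and sinks $(e'(a))_{a\in A_*}$.

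The same lemma applies verbatim to glue $\mathcal{S}\doubleplus\mathcal{B}$ with $\mathcal{N}$. Here the only shared nodes are the primed nodes $\{e'(a):a\in A_*\}$, which coincide with $\nabla_{\mathcal{S}\doubleplus\mathcal{B}}\cap\Delta_{\mathcal{N}}$. Each $e'(a)$ is a graph-theoretic sink of $\mathcal{S}\doubleplus\mathcal{B}$ — it is isolated in $\mathcal{S}$ and receives only the bridge arc $(e(a),e'(a))$ of $\mathcal{B}$ — and it is a graph-theoretic source of $\mathcal{N}$ (the dual statement to the one above, applied to $\mathcal{H}(T'_*)$ after reversing arcs). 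Applying Lemma \ref{lem: union of networks} a second time yields that $\mathcal{R}=\mathcal{S}\doubleplus\mathcal{B}\doubleplus\mathcal{N}$ is a network with sources $(\Delta_i)_{i\in V}$ and sinks $(\nabla_i)_{i\in V}$, which is the claim; the accompanying factorization $\FNP(\mathcal{R})=\FNP(\mathcal{S})\times\FNP(\mathcal{B})\times\FNP(\mathcal{N})$ and the composition rule for underlying maps come along for free from the lemma.

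Essentially everything here is bookkeeping of vertex sets. The one genuinely non-formal input — and the step I expect to be the main obstacle — is the claim that the distinguished $e$-nodes $(e(a))_{a\in A_*}$ are honest graph-theoretic sinks of the hemisphere $\mathcal{H}(T_*)$ (and, dually, that the $e'$-nodes are graph-theoretic sources of $\mathcal{N}$). This requires tracing the incoming and outgoing arcs at those nodes across the local digraphs $\Gamma(i)$ that contain them, and it is the point at which I would defer to the detailed analysis in \cite{BEGLR}, the rest following formally from Lemma \ref{lem: union of networks}.
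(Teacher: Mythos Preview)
The paper gives no proof of this corollary; it is recorded as an immediate consequence of defining $\mathcal{R}=\mathcal{S}\doubleplus\mathcal{B}\doubleplus\mathcal{N}$ and of the preceding Proposition that the hemisphere is a network. Your plan to unpack this via two applications of Lemma~\ref{lem: union of networks} is the natural one and matches the paper's intent.

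There is, however, a genuine gap at exactly the place you flag as the ``main obstacle''. The claim that the nodes $(e(a))_{a\in A_*}$ are graph-theoretic sinks of $\mathcal{S}=\mathcal{H}(T_*)$ is \emph{false}, and deferring to \cite{BEGLR} cannot rescue it. For an edge $\{i,j\}$ of $T_*$, the node $e(i,j)$ lies in both local digraphs $\Gamma(i)$ and $\Gamma(j)$ (this is exactly the ``key property'' stated just before the Proposition on the hemisphere). In $\Gamma(i)$ it has only incoming arcs, but in $\Gamma(j)$ it is of the form $e(\text{neighbour},j)$ and therefore has outgoing arcs to the adjacent $s$-nodes (row~2 of Figure~\ref{Gamma i}). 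Thus $e(i,j)$ is not a graph-theoretic sink of the full hemisphere, and the hypotheses of Lemma~\ref{lem: union of networks} are not literally met for the pair $(\mathcal{S},\mathcal{B})$. By the same token, $e'(a)$ is not a graph-theoretic source of $\mathcal{N}$.

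The corollary is nevertheless true, and the repair is short. Acyclicity of $\mathcal{R}$ follows directly: the vertex sets of $\mathcal{S}$ and $\mathcal{N}$ are disjoint (unprimed versus primed labels), and the only arcs between them are the bridges $(e(a),e'(a))$, all oriented from $\mathcal{S}$ to $\mathcal{N}$. Hence no directed walk can pass from $\mathcal{N}$ back into $\mathcal{S}$, so any cycle would be confined to one hemisphere, and both are acyclic by the preceding Proposition. The identification of sources and sinks as $(\Delta_i)_{i\in V}$ and $(\nabla_i)_{i\in V}$ then follows formally from the set-theoretic formulas in Lemma~\ref{lem: union of networks}, which do not depend on the sink/source hypothesis.
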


A family of network paths $\Lambda \in \FNP({\mathcal{R}})$ is called \emph{full} if it visits every bridge between hemispheres. Note that if $\Lambda$ is non-intersecting, it is in particular full.

\begin{p}[\cite{BEGLR}]
\label{prop:lift paths to routemap}
    Let $T_* = (V\cup\{*\},E_*)$ be a rooted tree with a plane directed rooted tree structure induced by a directed tree $A_*$.
    Let $\mathcal{R}$ be the route map of $A_*$.
    \begin{enumerate}
        \item \label{propItem:paths in routemap}
        Let $i, j \in V$, let $a\in P(i,j)\subseteq A_*$.
        \begin{enumerate}
            \item There is a unique path from $\Delta_i$ to $e(a)$ in $\mathcal{S}$.
            \item There is a unique path from $e'(a)$ to $\nabla_j$ in $\mathcal{N}$.
            \item There is a unique path from $\Delta_i$ to $\nabla_j$ in $\mathcal{R}$.
        \end{enumerate}
        Furthermore, every network path of $\mathcal{R}$ arises this way.
        \item \label{propItem:bijection catalysts routemap}
        The correspondence of (1) induces a bijection between the set $C(A_*)$ of $V$-catalysts inducing $A_*$ and the set of full families of network paths in $\mathcal{R}$.
    \end{enumerate}
\end{p}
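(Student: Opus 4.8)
The plan is to reduce both statements to a local analysis of the digraphs $\Gamma(i)$ and then to reassemble them from the way $\mathcal{S}$, $\mathcal{B}$ and $\mathcal{N}$ are glued. The key structural input is that $\mathcal{S}=\mathcal{H}(T_*)$ is the union of the local digraphs $\Gamma(i)$, $i\in V\cup\{*\}$, with $\Gamma(i)\cap\Gamma(j)=\{e(i,j),e(j,i)\}$ when $i$ and $j$ are adjacent in $T_*$ and $\Gamma(i)\cap\Gamma(j)=\emptyset$ otherwise. Inspecting Figure~\ref{Gamma i}, in $\Gamma(i)$ the node $e(i,j)$ is a sink and $e(j,i)$ is a source, and inside $\Gamma(j)$ one checks that $e(j,i)$ cannot be reached from $e(i,j)$ (from $e(i,j)$ one only leaves along the ascending or the descending $s$-chain of $j$, and both bypass $e(j,i)$). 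Together with the acyclicity of $\mathcal{S}$, which is \cite[Prop.~6.6]{BEGLR}, this shows that once a directed path of $\mathcal{S}$ leaves $\Gamma(i)$ for a neighbour $\Gamma(j)$ it never returns to $\Gamma(i)$; hence every directed path of $\mathcal{S}$ splits uniquely into local paths lying in $\Gamma(i_0),\Gamma(i_1),\dots$ along a non-backtracking walk $i_0,i_1,\dots$ of $T_*$, that is, along the unique tree path between its endpoints.

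The core is a local claim which I would prove by direct inspection of Figure~\ref{Gamma i}, using that $<_i$ lists the ascending children of $i$ first, then the descending children, then (if $i\neq *$) the parent: $\Gamma(i)$ has \emph{at most one} directed path between any ordered pair of its nodes, and from $\Delta_i$ — and from each incoming $e$-node $e(j,i)$ — the reachable outgoing $e$-nodes $e(i,j')$ form an explicit ``interval'' for $<_i$. Granting this, part~\ref{propItem:paths in routemap}(a) follows by induction on the tree distance from $i$ to the tail $u$ of $a=(u,v)$: the base case $u=i$ is the local claim in $\Gamma(i)$, and in general the path from $\Delta_i$ is forced to leave $\Gamma(i)$ through $e(i,w)$, where $w$ is the first step of the tree path from $i$ to $u$, to enter $\Gamma(w)$ there and to proceed, each continuation being pinned down by the local claim; the hypothesis that $a$ points \emph{away} from $i$ is exactly what makes $e(a)$, and not $e(v,u)$, the terminus. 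The same analysis identifies the sinks $e(a)$ reachable from $\Delta_i$ in $\mathcal{S}$ as precisely those $a=(u,v)$ for which $u$ separates $i$ from $v$ in $T_*$. Part~\ref{propItem:paths in routemap}(b) is the mirror statement: apply (a) to the mirrored plane rooted directed tree $T'_*$ and transport it along the arc reversal and the relabelling $\Psi$ used to build $\mathcal{N}$, getting a unique path $e'(a)\to\nabla_j$ in $\mathcal{N}$, which exists precisely when $v$ separates $u$ from $j$.

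For part~\ref{propItem:paths in routemap}(c) I would invoke Lemma~\ref{lem: union of networks}: since $\mathcal{R}=\mathcal{S}\doubleplus\mathcal{B}\doubleplus\mathcal{N}$, a network path from $\Delta_i$ to $\nabla_j$ splits uniquely as an $\mathcal{S}$-path $\Delta_i\to e(a)$, the bridge $e(a)\to e'(a)$, and an $\mathcal{N}$-path $e'(a)\to\nabla_j$ for a single arc $a$; by (a) and (b) this decomposition exists exactly when $a$ lies on $P(i,j)$ oriented from the $i$-side to the $j$-side, and then each piece — hence the whole path — is unique, which is also the assertion that every network path of $\mathcal{R}$ arises this way. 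For part~\ref{propItem:bijection catalysts routemap}: given $(\sigma,f)\in C(A_*)$ the arcs $f(i)$ are pairwise distinct and exhaust $A_*$, so letting $\Lambda_i$ be the unique path $\Delta_i\to e(f(i))\to e'(f(i))\to\nabla_{\sigma(i)}$ gives a family of network paths with underlying permutation $\sigma$ that is full, as it uses every bridge; conversely a full family $\Lambda$ uses its $\#V=\#A_*$ bridges bijectively, so $i\mapsto a_i:=(\text{bridge of }\Lambda_i)$ is a bijection $V\to A_*$, and the decomposition above together with (a), (b) forces $a_i$ to lie on $P(i,\sigma_\Lambda(i))$ with the right orientation, whence $(\sigma_\Lambda,\ i\mapsto a_i)\in C(A_*)$. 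These assignments are mutually inverse and preserve the sign, the sign of a family being that of its underlying permutation, so they constitute the bijection.

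The main obstacle is the local claim about $\Gamma(i)$: verifying that it has no two directed paths with the same endpoints and computing the reachability ``intervals'' is an elementary but delicate case analysis over the three kinds of nodes and the order $<_i$. This is the technical heart of the route map, carried out in \cite{BEGLR}, and I would quote it rather than reprove it here.
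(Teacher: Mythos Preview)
Your sketch is correct and aligns with the paper's treatment: the paper does not prove this proposition at all but merely points to the precise results in \cite{BEGLR} (Lemma~6.5, Proposition~6.6, Corollary~6.7 for part~(1) and Lemma~6.10 for part~(2)), and your outline---local uniqueness in each $\Gamma(i)$, induction along the tree path, mirror symmetry for $\mathcal{N}$, splitting via the concatenation lemma, and the bridge-counting argument for fullness---is exactly the strategy those results implement. Your explicit acknowledgement that the local claim is the technical heart to be quoted from \cite{BEGLR} matches the paper's own stance.
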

\begin{proof}
    We point at the precise results in \cite{BEGLR} to guide the more interested reader. The first part follows from Lemma 6.5 as deduced in Proposition 6.6 and Corollary 6.7. The second part is Lemma 6.10.
\end{proof}

\subsection{Local network structure}
The network structure of $\mathcal{R}$ can also be constructed locally, by endowing each local graph $\Gamma(i)$, $i\in V\cup\{*\}$, with a network structure.
\begin{de}
    Let $x \in V$, let $\partial x = \Asc(x)\cup \Des(x) \cup \{X\}$.
    Let $\Gamma_{\mathcal{S}}(x)$ be the local graph of $x$ in $\mathcal{S}$. Endow it with network structure by setting sources
    \[
    \{\Delta_x\} \cup \{e(y, x) \ :\  y\text{ child of }x\}
    \]
    and sinks  
    \[
    \{e(x, X)\} \cup \{e(a)\ : \ a\in A_*\text{ adjacent to }x,\, a\ne(X,x)\}.
    \]
    Let $\Gamma_{\mathcal{N}}(x)$ be the local graph of $x$ in $\mathcal{N}$. Endow it with a network structure with sources
    \[
    \{e'(X,x)\} \cup \{e'(a)\ : \ a\in A_*\text{ adjacent to }x\}.
    \]
    and sinks  
    \[
    \{\nabla_x\} \cup \{e'(x, y) \ :\  y\text{ child of }x\}
    \]
    We call these the \emph{Southern (resp. Northern) local network of $x$}.
\end{de}

It is apparent from Figure \ref{Gamma i} that these are acyclic graphs. The sets of sources and sinks of the Southern and Northern local network have $\delta(x)$ many nodes each.

The local graph around the root is slightly different.
    Let $\Gamma_{\mathcal{S}}(*)$ be the local graph of $*$ in $\mathcal{S}$. We endow it with a network structure with
    \begin{enumerate}
        \item a source $e(y,*)$ for each $y\in\partial*$, and
        \item a sink $e(a)$ for each $a \in A_*$  adjacent to $*$.
    \end{enumerate}
    Let $\Gamma_{\mathcal{N}}(*)$ be the local graph of $*$ in $\mathcal{N}$ and endow it with a network structure with
    \begin{enumerate}
        \item a sink $e'(*,y)$ for each $y\in\partial*$, and
        \item a source $e'(a)$ for each $a \in A_*$ adjacent to $*$.
    \end{enumerate}
    We call these the \emph{Southern (resp. Northern) local network of the root $*$}. 

    Let $\mathcal{B}(x)$ be the network with one source $e(a)$ and one sink $e'(a)$ for each arc $a$ adjacent to $x$ (except for $a = (x,X)$), no other nodes, and arcs $\big(e(a), e'(a)\big)$.
\begin{de}
    The \emph{local network of $x$} is \emph{$\Gamma_{\mathcal{R}}(x)$}$:=\Gamma_{\mathcal{S}}(x)\doubleplus\mathcal{B}(x)\doubleplus\Gamma_{\mathcal{N}}(x)$.
\end{de}
    See Figure \ref{fig: non-intersecting local network x}. We spell out the network structure of the local network. Let $x \in V$ be a vertex.
    Then $\Gamma_{\mathcal{R}}(x)$ $x$ has sources
    \(
    \{\Delta_x\}\cup\{e(y,x)\ :\ y\text{ child of }x\}\cup\{e'(X,x)\}
    \)
    and sinks
    \(
    \{\nabla_x\}\cup\{e'(x,y)\ :\ y\text{ child of }x\}\cup\{e(x,X)\}.
    \)
    
    Let $x = *$ be the root. Then $\Gamma_{\mathcal{R}}(*)$ has
    \begin{enumerate}
        \item a source $e(y,*)$ for each child $y$ of $*$, and
        \item a sink $e'(*,y)$ for each child $y$ of $*$.
    \end{enumerate}

We now study non-intersecting families of network paths in $\mathcal{R}$. Recall that $\mathcal{R} = \mathcal{S} \doubleplus \mathcal{B} \doubleplus \mathcal{N}$. We will shortly refine this concatenation, to allow us to compute the underlying map of any family of network paths.

\begin{lem}
\label{lem: underlying permutation south}
Let $x \in V$ be a vertex with $a = \#\Asc(x)$ ascending children and $d = \#\Des(x)$ descending children. Let 
\[
(\partial x, <_x) = (y_1 <_x \cdots <_x y_{a} <_x z_1 <_x \cdots <_x z_{d} <_x X).
\]
There is a unique non-intersecting family of network paths in $\Gamma_{\mathcal{S}}(x)$ and its underlying map is the following:
\[
\sigma_x :=
    \scriptsize
    \begin{tikzpicture}[baseline = -1.5em, x = 3.5em, y = 3em, yscale = -1]
    \foreach \i in {2, ..., 9}{
    \node (\i) at (\i,0) {};
    }
    \foreach \i in {2,3,4,6,7,8}{
    \filldraw (\i) circle (1.5pt);
    }
    \filldraw (2) node[above] {$e(x,y_1)$} ++(0,1) node[below] {$e(x,y_1)$} circle (1.5pt);
    \filldraw (3) node[above] {$\cdots$} ++(0,1) node[below] {$\cdots$} circle (1.5pt);
    \filldraw (4) node[above] {$e(x,y_a)$} ++(0,1) node[below] {$e(x,y_a)$} circle (1.5pt);
    \filldraw (5)++(0,1) circle (1.5pt) node[below] {$\Delta_x$} circle (1.5pt);
    \filldraw (6) node[above] {$e(x,z_1)$} ++(0,1) node[below] {$e(z_1,x)$} circle (1.5pt);
    \filldraw (7) node[above] {$\cdots$} ++(0,1) node[below] {$\cdots$} circle (1.5pt);
    \filldraw (8) node[above] {$e(x,z_d)$} ++(0,1) node[below] {$e(z_d,x)$} circle (1.5pt);
    \filldraw (9) circle (1.5pt) node[above] {\,\qquad$e(x,X)$};

    \begin{scope}[semithick, ->]
        \draw (2)++(0,1) -- (2);
        \draw (3)++(0,1) -- (3);
        \draw (4)++(0,1) -- (4);
        \draw (5)++(0,1) -- (6);
        \draw (6)++(0,1) -- (7);
        \draw (7)++(0,1) -- (8);
        \draw (8)++(0,1) -- (9);
    \end{scope}
    \end{tikzpicture}
\]
\end{lem}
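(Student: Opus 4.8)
The statement concerns the single explicit finite digraph underlying $\Gamma_{\mathcal{S}}(x)$, drawn in Figure~\ref{Gamma i} (take $i=x$ and neighbour order $y_1<_x\cdots<_x y_a<_x z_1<_x\cdots<_x z_d<_x X$), so the plan is to prove it by a hands-on analysis of that graph. Write $c_1,\dots,c_m$ ($m=\delta(x)$) for the neighbours of $x$ listed by $<_x$, call $s_x(c_1,c_2)\to s_x(c_2,c_3)\to\cdots\to s_x(c_{m-1},c_m)$ the \emph{upper chain} of $\Gamma_{\mathcal{S}}(x)$ and $s_x(c_m,c_{m-1})\to\cdots\to s_x(c_2,c_1)$ its \emph{lower chain}. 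Since $\Gamma_{\mathcal{S}}(x)$ has $\delta(x)$ sources and $\delta(x)$ sinks, a family of network paths realises a \emph{bijection} from sources to sinks, and I will use this repeatedly.

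First I would dispose of the ascending children: for each ascending child $y_i$ the node $e(y_i,x)$ is at once a source and a sink of $\Gamma_{\mathcal{S}}(x)$ and has in-degree zero there (the only arcs of $\Gamma(x)$ meeting an $e$-node of the form $e(\,\cdot\,,x)$ leave it), so in \emph{every} family the sink $e(y_i,x)$ can only be reached by the constant path at the source $e(y_i,x)$; these $a$ constant paths are pairwise disjoint and meet nothing else. It then remains to determine the non-intersecting families of the sub-network with sources $\{\Delta_x\}\cup\{e(z_j,x):1\le j\le d\}$ and sinks $\{e(x,z_j):1\le j\le d\}\cup\{e(x,X)\}$. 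For existence I would exhibit the family $\Lambda^0$: the path from $\Delta_x$ runs along the upper chain $s_x(y_1,y_2)\to\cdots\to s_x(y_a,z_1)$ and then leaves it into $e(x,z_1)$ (degenerating to the single arc $\Delta_x\to e(x,z_1)$ when $a=0$); the path from $e(z_j,x)$ with $1\le j<d$ is $e(z_j,x)\to s_x(z_j,z_{j+1})\to e(x,z_{j+1})$; and the path from $e(z_d,x)$ is $e(z_d,x)\to s_x(z_d,X)\to e(x,X)$. These paths cut the upper chain into consecutive arcs and use pairwise disjoint $e$-nodes, so $\Lambda^0$ is non-intersecting, and reading off its targets together with the constant paths above yields exactly the map $\sigma_x$ of the statement.

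For uniqueness, let $\Lambda$ be any non-intersecting family of the sub-network; I would show by downward induction on $i=d,d-1,\dots,0$ that in $\Lambda$ the sink $w_{i+1}$ (meaning $e(x,X)$ if $i=d$, and $e(x,z_{i+1})$ otherwise) is reached by the source $u_i$ (meaning $\Delta_x$ if $i=0$, and $e(z_i,x)$ otherwise) along precisely the path prescribed by $\Lambda^0$; this forces $\Lambda=\Lambda^0$ and re-derives $\sigma_x$. The base case is quick: $e(x,X)$ has a unique in-arc, from the last upper-chain node $s_x(c_{m-1},c_m)$, and if the path reaching it did not come directly from $e(z_d,x)$ it would also use $s_x(c_{m-2},c_{m-1})$, and then the sink $e(x,z_d)$ — whose only remaining in-arc comes from the lower-chain node $s_x(c_m,c_{m-1})$, which is unreachable in $\Gamma_{\mathcal{S}}(x)$ since $e(X,x)$ is not a source — could not be reached. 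In the inductive step one first observes that the lower-chain node exiting into $e(x,z_{i+1})$ and every lower-chain node above it cannot be used by $\Lambda$, because reaching any of them would require one of $u_{i+1},\dots,u_d$ to enter the lower chain, contrary to the inductive hypothesis that these follow their $\Lambda^0$-paths; hence $e(x,z_{i+1})$ is reached along the upper chain, and tracing that path backwards — using that the ascending-child sources are already occupied by constant paths, that $e(X,x)$ is not a source, and that an overshoot would block the next sink $e(x,z_i)$ exactly as in the base case — pins it down to the $\Lambda^0$-path from $u_i$ (for $i=0$ this is the $\Delta_x$-path of $\Lambda^0$, running up the upper chain to $e(x,z_1)$ when $a\ge1$ and the direct arc when $a=0$).

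The main obstacle is precisely this "freezing out" of the lower chain in the inductive step: a descending path and the upper-chain path that reaches the current sink never share a vertex, so the contradiction is not a visible crossing but the global fact that using any lower-chain node would consume an upper-chain node still required to reach a not-yet-matched sink. Casting the whole uniqueness argument as the one downward induction above — essentially the bookkeeping of the construction of $\Lambda^0$ run in reverse — is what keeps this under control; the rest is routine inspection of Figure~\ref{Gamma i}.
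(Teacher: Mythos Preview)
Your argument is correct. The paper does not actually prove this lemma: its entire proof is the citation ``This is \cite[Lemma~7.5]{BEGLR} (uniqueness and map) and \cite[Proposition~8.9]{BEGLR} (existence).'' You have therefore supplied a self-contained direct analysis of the local digraph $\Gamma_{\mathcal{S}}(x)$ where the paper simply defers to its companion article; your explicit family $\Lambda^0$ for existence and the backward-tracing downward induction for uniqueness are presumably close in spirit to what the cited results do, but here they are written out in full. One cosmetic remark: your base case tacitly assumes $d\ge1$ (it names $e(z_d,x)$); when $d=0$ the base case $i=d=0$ coincides with your $i=0$ step and the path from $\Delta_x$ must be traced along the whole upper chain to $e(x,X)$, which your later reasoning already covers.
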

\begin{proof}
    This is \cite[Lemma 7.5]{BEGLR} (uniqueness and map) and \cite[Proposition 8.9]{BEGLR} (existence). See the southern part of Figure \ref{fig: non-intersecting local network x} for an example.
\end{proof}
    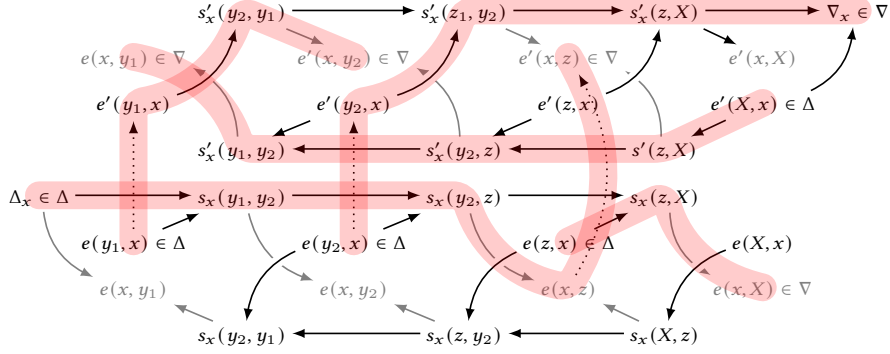
\begin{figure}
\[\tiny
\begin{tikzcd}[cramped, column sep = 0pt, row sep = small, every matrix/.append style={name=Coord}, remember picture]
	&& {s'_x(y_2,y_1)} && {s'_x(z_1,y_2)} && {s'_x(z,X)} && {\nabla_x\in\nabla} \\
	& \textcolor{gray}{e(x,y_1)\in\nabla} && \textcolor{gray}{e'(x,y_2)\in\nabla} && \textcolor{gray}{e'(x,z)\in\nabla} && \textcolor{gray}{e'(x,X)} \\
	& {e'(y_1,x)} && {e'(y_2,x)} && {e'(z,x)} && {e'(X,x)\in\Delta} \\
	  && {s'_x(y_1,y_2)} && {s'_x(y_2,z)} && {s'(z,X)} \\
	{\Delta_x\in\Delta} && {s_x(y_1,y_2)} && {s_x(y_2,z)} && {s_x(z,X)} \\
	& {e(y_1,x)\in\Delta} && {e(y_2,x)\in\Delta} && {e(z,x)\in\Delta} && {e(X,x)} \\
	& \textcolor{gray}{e(x,y_1)} && \textcolor{gray}{e(x,y_2)} && \textcolor{gray}{e(x,z)} && \textcolor{gray}{e(x,X)\in\nabla} \\
	&& {s_x(y_2,y_1)} && {s_x(z,y_2)} && {s_x(X,z)}
	\arrow[from=1-3, to=1-5]
	\arrow[gray, from=1-3, to=2-4]
	\arrow[from=1-5, to=1-7]
	\arrow[from=1-7, to=1-9]
	\arrow[gray, from=1-5, to=2-6]
	\arrow[from=1-7, to=2-8]
	\arrow[from=3-4, to=4-3]
	\arrow[from=3-6, to=4-5]
	\arrow[from=3-8, to=4-7]
	\arrow[gray, bend right, from=4-3, to=2-2]
	\arrow[gray, bend right, from=4-5, to=2-4]
	\arrow[from=4-5, to=4-3]
	\arrow[gray, bend right, from=4-7, to=2-6]
	\arrow[from=4-7, to=4-5]
	\arrow[from=5-1, to=5-3]
	\arrow[gray, bend right, from=5-1, to=7-2]
	\arrow[from=5-3, to=5-5]
	\arrow[gray, bend right, from=5-3, to=7-4]
	\arrow[from=5-5, to=5-7]
	\arrow[gray, bend right, from=5-5, to=7-6]
	\arrow[gray, bend right, from=5-7, to=7-8]
	\arrow[dotted, from=6-2, to=3-2]
	\arrow[from=6-2, to=5-3]
	\arrow[dotted, from=6-4, to=3-4]
	\arrow[from=6-4, to=5-5]
	\arrow[bend right, from=6-4, to=8-3, crossing over]
	\arrow[from=6-6, to=5-7]
	\arrow[bend right, from=6-6, to=8-5, crossing over]
	\arrow[bend right, from=6-8, to=8-7, crossing over]
	\arrow[bend right, dotted, from=7-6, to=2-6]
	\arrow[gray, from=8-3, to=7-2]
	\arrow[gray, from=8-5, to=7-4]
	\arrow[from=8-5, to=8-3]
	\arrow[gray, from=8-7, to=7-6]
	\arrow[from=8-7, to=8-5]
	\arrow[bend right, from=3-2, to=1-3, crossing over]
	\arrow[bend right, from=3-4, to=1-5, crossing over]
	\arrow[bend right, from=3-6, to=1-7, crossing over]
	\arrow[bend right, from=3-8, to=1-9, crossing over]
\end{tikzcd}
\begin{tikzpicture}[overlay, remember picture]
    \begin{scope}[red, line width = 1.5em, opacity = 0.2, rounded corners, line cap=round]
    \draw (Coord-5-1.center) -- (Coord-5-3.center) -- (Coord-5-5.center) to[bend right] (Coord-7-6.center) to[bend right] (Coord-2-6.center);
    \draw (Coord-6-2.center) -- (Coord-3-2.center) to[bend right] (Coord-1-3.center) -- (Coord-2-4.center);
    \draw (Coord-6-4.center) -- (Coord-3-4.center) to[bend right] (Coord-1-5.center) -- (Coord-1-9.center);
    \draw (Coord-6-6.center) -- (Coord-5-7.center) to[bend right]  (Coord-7-8.center);
    \draw (Coord-3-8.center) -- (Coord-4-7.center) -- (Coord-4-3.center) to[bend right] (Coord-2-2.center);
    \end{scope}
\end{tikzpicture}
\]
\caption{Non-intersecting family of network paths in $\Gamma_{\mathcal{R}}(x)$ for a vertex $x\in V$ with two ascending and one descending neighbors.}
\label{fig: non-intersecting local network x}
\end{figure}
\begin{lem}
\label{lem: underlying permutation north}
Let $x \in V$ be a vertex with $a = \#\Asc(x)$ ascending children and $d = \#\Des(x)$ descending children. Let 
\[
(\partial x, <_x) = (y_1 <_x \cdots <_x y_{a} <_x z_1 <_x \cdots <_x z_{d} <_x X).
\]
There is a unique non-intersecting family of network paths in $\Gamma_{\mathcal{N}}(x)$ and its underlying map is the following:
\[
\nu_x :=
    \scriptsize
    \begin{tikzpicture}[baseline = -1.5em, x = 3.5em, y = 3em, yscale = -1]
    \foreach \i in {1, ..., 8}{
    \node (\i) at (\i,0) {};
    }
    \foreach \i in {2,3,4,6,7,8}{
    \filldraw (\i) circle (1.5pt);
    }
    \filldraw (1)++(0,1) circle (1.5pt) node[below] {$e'(X,x)$\qquad\,} circle (1.5pt);
    \filldraw (2) node[above] {$e'(x,y_1)$} ++(0,1) node[below] {$e'(y_1,x)$} circle (1.5pt);
    \filldraw (3) node[above] {$\cdots$} ++(0,1) node[below] {$\cdots$} circle (1.5pt);
    \filldraw (4) node[above] {$e'(x,y_a)$} ++(0,1) node[below] {$e'(y_a,x)$} circle (1.5pt);
    \filldraw (5) circle (1.5pt) node[above] {$\nabla_x$};
    \filldraw (6) node[above] {$e'(x,z_1)$} ++(0,1) node[below] {$e'(x,z_1)$} circle (1.5pt);
    \filldraw (7) node[above] {$\cdots$} ++(0,1) node[below] {$\cdots$} circle (1.5pt);
    \filldraw (8) node[above] {$e'(x,z_d)$} ++(0,1) node[below] {$e'(x,z_d)$} circle (1.5pt);

    \begin{scope}[semithick, ->]
        \draw (1)++(0,1) -- (2);
        \draw (2)++(0,1) -- (3);
        \draw (3)++(0,1) -- (4);
        \draw (4)++(0,1) -- (5);
        \draw (6)++(0,1) -- (6);
        \draw (7)++(0,1) -- (7);
        \draw (8)++(0,1) -- (8);
    \end{scope}
    \end{tikzpicture}
\]
\end{lem}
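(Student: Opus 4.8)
The plan is to derive this from Lemma~\ref{lem: underlying permutation south} by exploiting how the Northern hemisphere was built. Recall that $\mathcal{N}$ is obtained from $\mathcal{H}(T'_*)$ by reversing every arc and relabeling nodes through $\Psi$, where $T'_*$ is the mirror plane rooted directed tree (with $A_*'$ the mirror orientation and $<'_x$ the reversed local orders). Since the local graphs are preserved by these operations, the Northern local network $\Gamma_{\mathcal{N}}(x)$ is, as a network, the image under ``reverse all arcs, then apply $\Psi$'' of the Southern local network of $x$ computed inside $\mathcal{H}(T'_*)$, with the roles of sources and sinks interchanged.

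First I would record the elementary principle underlying this: reversing all arcs of an acyclic digraph keeps it acyclic, turns a network $(\mathcal{G},\Delta,\nabla)$ into a network $(\mathcal{G}',\nabla,\Delta)$, sends a family of network paths to its reverse, and thereby restricts to a bijection on families of network paths that preserves the non-intersecting ones and replaces each underlying map by its inverse; relabeling by $\Psi$ is then a network isomorphism and is combinatorially invisible. Next I would apply Lemma~\ref{lem: underlying permutation south} to the vertex $x$ inside $T'_*$: there the ascending and descending children of $x$ are swapped and the order on $\partial x$ is reversed, so the lemma yields a unique non-intersecting family of network paths in the Southern local network of $x$ in $\mathcal{H}(T'_*)$, together with its underlying map; call it $\sigma'_x$. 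Transporting this family through arc-reversal and $\Psi$ produces, by the first step, a unique non-intersecting family in $\Gamma_{\mathcal{N}}(x)$ whose underlying map is $\Psi\circ(\sigma'_x)^{-1}\circ\Psi^{-1}$. It then remains to evaluate this map: unwinding $\Psi$ via $\Delta_i\mapsto\nabla_i$, $e(i,j)\mapsto e'(j,i)$, $s_i(u,v)\mapsto s'_i(v,u)$ and inverting the ``shift'' permutation $\sigma'_x$ of Lemma~\ref{lem: underlying permutation south}, one checks that it agrees with the displayed $\nu_x$: the descending children $z_j$ of $x$ contribute the fixed points $e'(x,z_j)\mapsto e'(x,z_j)$, while the parent and the ascending children $y_i$ assemble into the chain $e'(X,x)\mapsto e'(x,y_1)$, then $e'(y_i,x)\mapsto e'(x,y_{i+1})$, and finally $e'(y_a,x)\mapsto\nabla_x$.

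I expect the single delicate point, and the main obstacle, to be exactly this last bookkeeping: composing the mirroring of the neighbor order, the arc-reversal, and the relabeling $\Psi$ while keeping straight which nodes are sources and which are sinks (and checking that the mirror data $T'_*$ genuinely presents the situation to which Lemma~\ref{lem: underlying permutation south} applies), so that the inverse of the mirrored Southern map lands precisely on $\nu_x$ as drawn. Alternatively --- and more in line with how the Southern case was handled --- one may simply invoke the Northern analogues of \cite[Lemma~7.5]{BEGLR} and \cite[Proposition~8.9]{BEGLR}; the reversal argument sketched above is in effect a derivation of those analogues from their Southern counterparts.
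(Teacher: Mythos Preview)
Your proposal is correct. The paper actually gives no proof for this lemma at all --- it is stated immediately after the Southern analogue (Lemma~\ref{lem: underlying permutation south}) and left without argument, the implicit understanding being that it follows by the very mirror symmetry you spell out (or, equivalently, by citing the Northern analogues of \cite[Lemma~7.5, Proposition~8.9]{BEGLR}). Your reversal-and-relabel argument is therefore not a different route so much as an explicit unpacking of what the paper leaves tacit; your closing remark already identifies this.
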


Let $\iota_x$ be the underlying permutation of the unique family of network paths of $\mathcal{B}(x)$, which sends $e(a)\mapsto e'(a)$ for each $a$ adjacent to $x$.
By Lemma \ref{lem: union of networks}, we conclude that there is a unique non-intersecting family of network paths in \(\Gamma_{\mathcal{R}}(x)\)
and whose underlying permutation is $\nu_x\circ\iota_x\circ\sigma_x$. See Figure \ref{fig: non-intersecting local network x}.
Lemma \ref{lem:underlying map} explains how to pass from the underlying permutation of families of network paths in the local networks to the underlying permutation of families of network paths in the route map. 
    Let $x\in V$ be a vertex with children
    \[
    y_1 <_x \cdots <_x y_{k}.
    \]
    If $x$ is a leaf, let $\overline{\sigma}_x = \sigma_x$ and $\overline{\nu}_x = \nu_x$; otherwise define recursively
    \[
    \overline{\sigma}_x := \sigma_x \circ \overline{\sigma}_{y_1} \circ\cdots\circ \overline{\sigma}_{y_{k}}\,,
    \quad\text{and}\quad
    \overline{\nu}_x := \overline{\nu}_{y_{k}} \circ\cdots\circ \overline{\nu}_{y_1} \circ \nu_x\,.
    \]
\begin{lem}\label{lem:underlying map}
    Let $\Lambda \in \FNP(\mathcal{R})$ be non-intersecting, and let $\rho_*$ be the underlying map of the induced family of network paths in $\Gamma_\mathcal{R}(*)$. Then, the underlying map of $\Lambda$ is
    \[
    \overline{\nu}_{j_{k}} \circ\cdots\circ \overline{\nu}_{j_1} \circ \rho_* \circ \overline{\sigma}_{j_1} \circ\cdots\circ \overline{\sigma}_{j_{k}}\,,
    \]
    where $(\partial*, <_*) = j_1 <_* \cdots <_* j_k$.
\end{lem}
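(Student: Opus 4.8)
The plan is to compute the underlying map of $\Lambda$ by realizing $\mathcal{R}$ as an iterated concatenation of the local networks attached to the vertices of $T_*$, traversed by a depth-first search, and reading off the answer from the composition rule $\sigma_{(\Lambda,\Lambda')}=\sigma_{\Lambda'}\circ\sigma_{\Lambda}$ of Lemma~\ref{lem: union of networks}, which is exactly what turns such a traversal into a nested composition. Since $\mathcal{R}=\mathcal{S}\doubleplus\mathcal{B}\doubleplus\mathcal{N}$, Lemma~\ref{lem: union of networks} already splits the underlying map of $\Lambda$ as $\sigma_{\Lambda^{\mathcal{N}}}\circ\iota\circ\sigma_{\Lambda^{\mathcal{S}}}$, where $\Lambda^{\mathcal{S}}$ and $\Lambda^{\mathcal{N}}$ are the (non-intersecting) families that $\Lambda$ induces on the two hemispheres and $\iota$ is the unique bridge map of $\mathcal{B}$; so it is enough to identify $\sigma_{\Lambda^{\mathcal{S}}}$ and $\sigma_{\Lambda^{\mathcal{N}}}$, and then to reassemble.

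For the Southern hemisphere I would induct on subtrees. For $x\in V\cup\{*\}$ with children $y_1<_x\cdots<_x y_k$, let $\mathcal{S}_x$ be the sub-network of $\mathcal{S}$ carried by $\bigcup_{y\in T_x}\Gamma_{\mathcal{S}}(y)$ (with the evident sources and sinks). The inductive claim is that $\mathcal{S}_x=\mathcal{S}_{y_k}\doubleplus\cdots\doubleplus\mathcal{S}_{y_1}\doubleplus\Gamma_{\mathcal{S}}(x)$ as networks, and hence, by the inductive hypothesis together with Lemma~\ref{lem: union of networks} and Lemma~\ref{lem: underlying permutation south} (applied to the final piece $\Gamma_{\mathcal{S}}(x)$), that $\mathcal{S}_x$ has a unique non-intersecting family, whose underlying map is $\sigma_x\circ\overline{\sigma}_{y_1}\circ\cdots\circ\overline{\sigma}_{y_k}=\overline{\sigma}_x$. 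Carried up to the children $j_1<_*\cdots<_*j_k$ of the root, this yields $\mathcal{S}=\mathcal{S}_{j_k}\doubleplus\cdots\doubleplus\mathcal{S}_{j_1}\doubleplus\Gamma_{\mathcal{S}}(*)$, and so $\sigma_{\Lambda^{\mathcal{S}}}=\rho_*^{\mathcal{S}}\circ\overline{\sigma}_{j_1}\circ\cdots\circ\overline{\sigma}_{j_k}$, where $\rho_*^{\mathcal{S}}$ is the underlying map of the non-intersecting family on $\Gamma_{\mathcal{S}}(*)$. The Northern hemisphere is treated in the mirror way, with parents now preceding children and using Lemma~\ref{lem: underlying permutation north}: one gets $\mathcal{N}_x=\Gamma_{\mathcal{N}}(x)\doubleplus\mathcal{N}_{y_1}\doubleplus\cdots\doubleplus\mathcal{N}_{y_k}$ with underlying map $\overline{\nu}_{y_k}\circ\cdots\circ\overline{\nu}_{y_1}\circ\nu_x=\overline{\nu}_x$, and $\sigma_{\Lambda^{\mathcal{N}}}=\overline{\nu}_{j_k}\circ\cdots\circ\overline{\nu}_{j_1}\circ\rho_*^{\mathcal{N}}$. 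Plugging back into $\sigma_{\Lambda^{\mathcal{N}}}\circ\iota\circ\sigma_{\Lambda^{\mathcal{S}}}$, sorting out where the bridges of $\mathcal{B}$ land, and using that $\Gamma_{\mathcal{R}}(*)=\Gamma_{\mathcal{S}}(*)\doubleplus\mathcal{B}(*)\doubleplus\Gamma_{\mathcal{N}}(*)$ has underlying map $\rho_*$, one arrives at $\overline{\nu}_{j_k}\circ\cdots\circ\overline{\nu}_{j_1}\circ\rho_*\circ\overline{\sigma}_{j_1}\circ\cdots\circ\overline{\sigma}_{j_k}$, which is the claim.

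The main obstacle is to verify that these nested concatenations genuinely satisfy the hypotheses of Lemma~\ref{lem: union of networks} at every cut: one must check that the set of nodes shared by a subtree network and $\Gamma_{\mathcal{S}}(x)$ (resp.\ $\Gamma_{\mathcal{N}}(x)$) is exactly the set of $e$-nodes on the connecting edge, and that each of those nodes is a graph-theoretic sink of one side and a graph-theoretic source of the other---so that the ``backward'' and vestigial $e$-nodes sitting around each vertex of $T_*$ do not spoil the cut---and likewise to track the bridge maps carefully when reassembling. This is precisely the kind of local analysis of hemispheres carried out in \cite{BEGLR} (around Lemmas~6.5--6.7 and Propositions~6.6 and~8.9), which I would invoke rather than reprove; it also supplies the fact, used implicitly above, that $\Lambda$ restricts to an honest family of network paths in each local network (cf.\ Proposition~\ref{prop:lift paths to routemap}). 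Granting that input, the remainder is a formal manipulation of compositions of (partial) maps.
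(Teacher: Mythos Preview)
Your proposal is correct and follows essentially the same approach as the paper: the paper likewise builds $\mathcal{R}$ as an iterated concatenation by defining $\Sigma_{\mathcal{S}}(x)$ and $\Sigma_{\mathcal{N}}(x)$ recursively over subtrees (your $\mathcal{S}_x$, $\mathcal{N}_x$), invokes Lemma~\ref{lem: union of networks} together with Lemmas~\ref{lem: underlying permutation south} and~\ref{lem: underlying permutation north} at each step, and reads off the underlying map as the indicated composition. You are in fact a bit more explicit than the paper about the bridge-reassembly step and about checking the sink/source hypotheses at each cut, but the skeleton is identical.
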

\begin{proof}
    Let $x\in V$ be a vertex with parent $X$. Then $\mathcal{G} = \Gamma_\mathcal{S}(x)$ and $\mathcal{G}' = \Gamma_\mathcal{S}(X)$ are in the hypotheses of Lemma \ref{lem: union of networks}, and therefore we can concatenate them. Similarly, we can concatenate $\Gamma_\mathcal{N}(X)$ and $\Gamma_\mathcal{N}(x)$. 
    Hence, we can construct $\mathcal{R}$ recursively as follows:
    \begin{enumerate}
        \item If $x$ is a leaf, let $\Sigma_\mathcal{S}(x) = \Gamma_\mathcal{S}(x)$ and $\Sigma_\mathcal{N}(x) = \Gamma_\mathcal{N}(x)$.
        \item Otherwise, let 
        \begin{align*}
            \Sigma_\mathcal{S}(x) &= \Sigma_\mathcal{S}(y_1) \doubleplus \cdots \doubleplus \Sigma_\mathcal{S}(y_k) \doubleplus\Gamma_\mathcal{S}(x) \quad\text{and}\\
            \Sigma_\mathcal{N}(x) &= \Gamma_\mathcal{N}(x)\doubleplus \Sigma_\mathcal{N}(y_k) \doubleplus \cdots \doubleplus \Sigma_\mathcal{N}(y_1),
        \end{align*}
        where $y_1, ..., y_k$ are the children of $x$.
        \item Let $\mathcal{R} = \Sigma_\mathcal{S}(*) \doubleplus \mathcal{B} \doubleplus \Gamma_\mathcal{N}(*)$.
    \end{enumerate}
    (Note that $\Sigma_\mathcal{S}(*) = \mathcal{S}$ and $\Sigma_\mathcal{N}(*) = \mathcal{N}$.)
    By applying Lemmas \ref{lem: union of networks}, \ref{lem: underlying permutation south}, and \ref{lem: underlying permutation north}, we conclude.
\end{proof}

\begin{ej}
    Conceptually, Lemma \ref{lem: union of networks} allows us to take the concatenation of networks but only if ``one is pointing into the other one''. Let $T$ be a rooted tree with four nodes: the root $*$, a child $x$, and two grandchildren $y, z$. Choose an arrowflow and construct $T_*$ and $\mathcal{R}$ accordingly. The local networks interact as follows:
    \begin{center}
        \includegraphics[scale = .7]{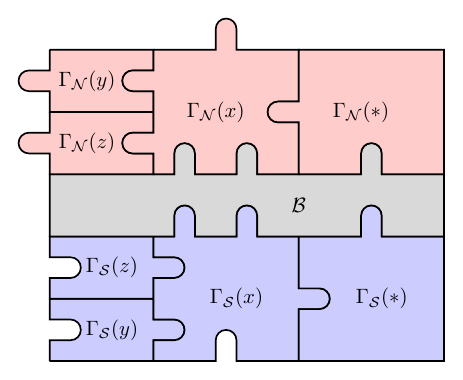} \hspace{2cm}
        \begin{tikzpicture}[x = 3em, y = 2em, baseline = -4em]
            \node (*) at (3,1) {$*$};
            \node (x) at (2,1) {$x$};
            \node (y) at (1,0) {$y$};
            \node (z) at (1,2) {$z$};
            \draw (*) -- (x);
            \draw (x) -- (y);
            \draw (x) -- (z);
        \end{tikzpicture}
    \end{center}
    In the picture, a source is indicated with an indent and a sink with a tab. For instance, $\Gamma_{\mathcal{S}}(x)$ has three sources $e(y,x)$, $e(z,x)$, and $\Delta_x$.
    Concatenating in the following order
    \[
    \Gamma_\mathcal{S}(z) \doubleplus 
    \Gamma_\mathcal{S}(y) \doubleplus 
    \Gamma_\mathcal{S}(x) \doubleplus 
    \Gamma_\mathcal{S}(*) \doubleplus 
    \mathcal{B} \doubleplus 
    \Gamma_\mathcal{N}(*) \doubleplus 
    \Gamma_\mathcal{N}(x) \doubleplus 
    \Gamma_\mathcal{N}(y) \doubleplus 
    \Gamma_\mathcal{N}(z)
    \]
    ensures that at every step one is joining two networks with one ``pointing into'' the next one.
\end{ej}

\subsection{Depth-First-Search walk}
Define the \emph{marked Depth-First-Search walk on $T_*$} recursively by letting $\DFS(x)$ be the trivial walk $\circled{x}$ if $x$ is a leaf, and for each $x\in V$, letting $\DFS(x)$ be given by
\[
x ~ \DFS(y_1) ~ x ~ \DFS(y_2) ~ x~ \cdots~ x \DFS(y_a) ~ \circled{x} ~ \DFS(z_1) ~ x \cdots ~ x ~ \DFS(z_d) ~ x,
\]
where $y_1, ..., y_a$ are the ascending children of $x$, and $z_1, ..., z_d$ are the descending children. 
Finally, $\DFS(*)$ is defined similarly, but the root is not marked.

This is a walk of length $2\cdot \#(V\cup\{*\})$ on $T_*$. When convenient, we also consider it as indexed by the integers modulo $2n$; in that case we
refer to it as the \emph{cyclic DFS walk}.
\begin{ej}
    In the plane rooted tree of Figure \ref{fig:  route map}, the marked DFS walk on $T_*$ started at the root is $* \DFS(1) * \DFS(2) *$. Expanding the walk recursively,
    \[
    \DFS(*) = * ~ 1 ~ \circled{3} ~ 1 ~ 4 ~ \circled{8} ~ \circled{4} ~ \circled{7} ~ 4 ~ \circled{9} ~ 4 ~ \circled{1} ~ * ~ 2 ~ \circled{6} ~ \circled{2} ~ \circled{5} ~ 2 ~ *.
    \]
\end{ej}

It is shown in \cite{BEGLR} that the underlying permutation $\bar{\nu}_x \circ\iota_x\circ \bar{\sigma}_x$ of the unique non-intersecting family of network paths of $\Gamma_\mathcal{R}(x)$ is the cycle given by the marks of the marked DFS walk for every $x\in V$.
Moreover, if $\#\Asc(*) = 0$ and $\#\Des(*) = 2$ then this also holds for the root, as we will later see. In the example above, the permutation is $(3~8~4~7~9~1~6~2~5)$. For now, the precise statement that we need is the following, and it is shown by a careful inspection of Lemmas \ref{lem: underlying permutation south} and \ref{lem: underlying permutation north}.

\begin{lem}\label{lem: DFS}
    Let $x\in V$ with parent $X$.
    If the subsequence of marked nodes of the marked DFS walk $\DFS(x)$ is $w_1 ~ w_2 ~ \cdots w_k$, then
    $\bar{\nu}_x \circ \bar{\sigma}_x$ is the following:
    \[
    \scriptsize
    \begin{tikzpicture}[baseline = -1.5em, x = 3.5em, y = 3em, yscale = -1]
    \foreach \i in {1, ..., 5}{
    \node (\i) at (\i,0) {};
    }
    \foreach \i in {2,...,6}{
    \filldraw (\i) circle (1.5pt);
    }
    \filldraw (1)++(0,1) circle (1.5pt) node[below] {$e'(X,x)$\qquad\,} circle (1.5pt);
    \filldraw (2) node[above] {$\nabla_{w_1}$} ++(0,1) node[below] {$\Delta_{w_1}$} circle (1.5pt);
    \filldraw (3) node[above] {$\nabla_{w_2}$} ++(0,1) node[below] {$\Delta_{w_2}$} circle (1.5pt);
    \filldraw (4) node[above] {$\cdots$} ++(0,1) node[below] {$\cdots$} circle (1.5pt);
    \filldraw (5) node[above] {$\nabla_{w_k}$} ++(0,1) node[below] {$\Delta_{w_k}$} circle (1.5pt);
    \filldraw (6) node[above] {$e(x,X)$} circle (1.5pt);

    \begin{scope}[semithick, ->]
        \draw (1)++(0,1) -- (2);
        \draw (2)++(0,1) -- (3);
        \draw (3)++(0,1) -- (4);
        \draw (4)++(0,1) -- (5);
        \draw (5)++(0,1) -- (6);
    \end{scope}
    \end{tikzpicture}
\]
\end{lem}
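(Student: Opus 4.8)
The plan is a structural induction on the subtree $T_x$. The base case is $x$ a leaf: then $\Asc(x)=\Des(x)=\emptyset$, so $\overline{\sigma}_x=\sigma_x$ and $\overline{\nu}_x=\nu_x$, and $\DFS(x)=\circled{x}$, so that $k=1$ and $w_1=x$. Specializing Lemma~\ref{lem: underlying permutation south} to $a=d=0$ gives $\sigma_x\colon\Delta_x\mapsto e(x,X)$, and Lemma~\ref{lem: underlying permutation north} to $a=d=0$ gives $\nu_x\colon e'(X,x)\mapsto\nabla_x$; since these act on disjoint nodes and a leaf carries no interior bridge, their composite is exactly $e'(X,x)\mapsto\nabla_x$, $\Delta_x\mapsto e(x,X)$, which is the asserted map for $k=1$.

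For the inductive step, let $c_1<_x\cdots<_x c_k$ be the children of $x$, with $c_1,\dots,c_a$ the ascending ones and $c_{a+1},\dots,c_{a+d}$ the descending ones. From the recursion defining the marked DFS walk, the subsequence of marked nodes of $\DFS(x)$ is $W^{(1)}\cdots W^{(a)}\,x\,W^{(a+1)}\cdots W^{(a+d)}$, where $W^{(i)}$ is the subsequence of marked nodes of $\DFS(c_i)$; and by the inductive hypothesis, applied to each $c_i$ (whose parent is $x$), the map $\overline{\nu}_{c_i}\circ\overline{\sigma}_{c_i}$ is the chain determined by $W^{(i)}$. Here, as in the paragraph preceding the statement, the composites $\overline{\nu}_{(\cdot)}\circ\overline{\sigma}_{(\cdot)}$ are read with the bridges interior to the relevant subtree interposed, so that every path of the associated family crosses exactly one bridge.

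I would then expand
\[
\overline{\nu}_x\circ\overline{\sigma}_x=\overline{\nu}_{c_k}\circ\cdots\circ\overline{\nu}_{c_1}\circ\nu_x\circ\sigma_x\circ\overline{\sigma}_{c_1}\circ\cdots\circ\overline{\sigma}_{c_k}
\]
and reorganize it with Lemma~\ref{lem: union of networks}: for $i\ne j$ the maps $\overline{\sigma}_{c_i},\overline{\sigma}_{c_j}$ (and likewise $\overline{\nu}_{c_i},\overline{\nu}_{c_j}$) act on disjoint node sets — the interiors of $T_{c_i}$, $T_{c_j}$ together with the disjoint interface nodes $e(c_i,x)$, $e'(x,c_i)$ — hence commute, while $\nu_x\circ\sigma_x$, which by Lemmas~\ref{lem: underlying permutation south} and~\ref{lem: underlying permutation north} only reshuffles $\Delta_x$, $\nabla_x$, $e(x,X)$, $e'(X,x)$ and the interface nodes — threading $\Delta_x$ through the descending interface nodes out to $e(x,X)$, threading $e'(X,x)$ through the ascending interface nodes out to $\nabla_x$, and leaving the remaining interface nodes alone — can be brought into the middle, each child meeting $x$ at exactly one interface node per side. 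Equivalently and more transparently, I would assemble the route map of the subtree $T_x$ as a concatenation of the $\Gamma_{\mathcal{S}}(\cdot)$'s, the interior bridges and the $\Gamma_{\mathcal{N}}(\cdot)$'s, and read off its underlying map directly by Lemmas~\ref{lem: union of networks}, \ref{lem: underlying permutation south} and~\ref{lem: underlying permutation north}, exactly as in the proof of Lemma~\ref{lem:underlying map}.

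It remains to verify that the reorganized composite is the displayed map. Starting from $e'(X,x)$, the map $\nu_x$ enters the first ascending child; its chain realizes all of $W^{(1)}$, $\nu_x$ passes to the next ascending child, and iterating through $c_1,\dots,c_a$ produces the blocks $W^{(1)},\dots,W^{(a)}$ in order, the last of them landing on $\nabla_x$; then $\sigma_x$ routes $\Delta_x$ onto the descending side, which accounts for the single mark $\circled{x}$ occupying the slot between the ascending and descending blocks of $\DFS(x)$; and the descending children $c_{a+1},\dots,c_{a+d}$ then contribute $W^{(a+1)},\dots,W^{(a+d)}$ in order, the composite exiting at $e(x,X)$. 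Concatenating, the sequence realized is $w_1\cdots w_k=W^{(1)}\cdots W^{(a)}\,x\,W^{(a+1)}\cdots W^{(a+d)}$, which completes the induction. The main obstacle is precisely this last matching: one must keep straight which copy of each interface node is source-side and which is sink-side, respect the reversed composition order of the $\overline{\nu}_{c_i}$'s, and confirm that the lone $\circled{x}$ falls exactly between the ascending and descending blocks — this is the careful inspection of Lemmas~\ref{lem: underlying permutation south} and~\ref{lem: underlying permutation north} carried out in \cite{BEGLR}, to which I would defer the more delicate node-by-node verifications.
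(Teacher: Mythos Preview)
Your proposal is correct and takes essentially the same approach as the paper: the paper's proof is a one-line pointer (``shown by a careful inspection of Lemmas~\ref{lem: underlying permutation south} and~\ref{lem: underlying permutation north}''), and your structural induction on $T_x$ is precisely how that inspection unfolds, given that $\overline{\sigma}_x$, $\overline{\nu}_x$, and $\DFS(x)$ are all defined recursively over the children of $x$. Your explicit handling of the base case, the commutation of the child factors, and the placement of $\circled{x}$ between the ascending and descending blocks simply spells out what the paper leaves implicit and defers to \cite{BEGLR}.
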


\begin{lem}[Lemma 8.7 in \cite{BEGLR}]
\label{lem:interlacing}
In the cyclic DFS walk, marked nodes and arcs in $A_*$
are interlaced. That is, if
$\DFS(*) = w_1 ~ w_2 ~ \cdots w_k$ and $w_u$ and $w_v$ are two marked nodes, consecutive in $\DFS(*)$,
then there is a unique step in the subwalk 
$w_u~w_{u+1} \cdots w_v$ that is an arc of $A_*$.
Furthermore, every step in $w_u~w_{u+1} \cdots w_v$ before this one points from child to parent, and every step after this one points parent to child.
\end{lem}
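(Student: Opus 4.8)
The plan is to prove the statement by structural induction following the recursive definition of the marked DFS walk. The observation that drives everything is a dictionary between orientations and the notions of ascending/descending child: a step $c\to X$ of the walk, from a vertex $c$ to its parent $X$ in $T_*$, is an arc of $A_*$ if and only if $c$ is an ascending child of $X$, while a step $X\to c$ from $X$ to a child $c$ is an arc of $A_*$ if and only if $c$ is a descending child of $X$. Since the walk only moves along edges of $T_*$, every step is either \emph{ascending} (points from a child to its parent) or \emph{descending} (the opposite), and the dictionary says precisely which of the two directions along a given edge is the one recorded by $A_*$. As a rough sanity check on the counting: $T_*$ has $\#(V\cup\{*\})-1=\#V$ edges, so among the $2\#V$ steps of the cyclic walk exactly $\#V$ are arcs of $A_*$, and there are also exactly $\#V$ marks; hence ``at least one $A_*$-arc between consecutive marks'' already forces ``exactly one''.

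First I would establish, by induction on the size of the subtree $T_x$, the following local statement $P(x)$ for every $x\in V$: reading $\DFS(x)$ from left to right, (i) every step before the first mark is descending and not an arc of $A_*$; (ii) every step after the last mark is ascending and not an arc of $A_*$; and (iii) between any two consecutive marks the steps form a (possibly empty) block of ascending non-$A_*$ steps, then exactly one arc of $A_*$, then a (possibly empty) block of descending non-$A_*$ steps. The base case is a leaf, where $\DFS(x)=\circled{x}$ has no steps. For the inductive step, write
\[
\DFS(x)=x\;\DFS(y_1)\;x\cdots x\;\DFS(y_a)\;\circled{x}\;\DFS(z_1)\;x\cdots x\;\DFS(z_d)\;x,
\]
and verify the clauses one step-type at a time, invoking $P(y_i)$ and $P(z_j)$ for the interiors of the subwalks and the dictionary for the ``glue'' steps $x\to y_i,\ y_i\to x,\ x\to z_j,\ z_j\to x$. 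Clauses (i) and (ii) are immediate (an opening step into an ascending subtree is descending and not in $A_*$; a closing step $z_d\to x$ out of a descending subtree is ascending and not in $A_*$). For clause (iii) one runs through the finitely many shapes a pair of consecutive marks can take — both marks inside the same $\DFS(y_i)$ or $\DFS(z_j)$ (apply $P$ of that child); the last mark of $\DFS(y_i)$ and the first mark of $\DFS(y_{i+1})$ (the unique $A_*$-arc is $y_i\to x$, ascending); the last mark of $\DFS(y_a)$ and $\circled{x}$ (again $y_a\to x$); $\circled{x}$ and the first mark of $\DFS(z_1)$ (the unique $A_*$-arc is $x\to z_1$, descending); and the last mark of $\DFS(z_j)$ and the first of $\DFS(z_{j+1})$ (the unique $A_*$-arc is $x\to z_{j+1}$, descending, while $z_j\to x$ is ascending and not in $A_*$) — in each case combining (i)/(ii) of the adjacent children with the dictionary pins down a single $A_*$-arc, with ascending steps before it and descending steps after.

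Finally I would run the same bookkeeping on the cyclic walk $\DFS(*)$. Because the root carries no mark, the only new configurations are ``last mark of $\DFS(z^*_j)$ to first mark of $\DFS(z^*_{j+1})$'' and the wrap-around ``last mark of the last root-subtree to first mark of the first root-subtree''. Here one uses that at the root all edges of $T_*$ point outward, i.e.\ $\#\Asc(*)=0$, which holds for the directed trees to which we apply the route map; then each step $z^*_j\to *$ is ascending and not an arc of $A_*$, the step $*\to z^*_{j+1}$ is the unique arc of $A_*$ and is descending, and the cyclic seam behaves identically. Together with $P(x)$ for all $x\in V$, this gives clause (iii) for every pair of marks consecutive in the cyclic DFS walk, which is the assertion of the lemma.

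The main obstacle I anticipate is clause (iii) in the inductive step: it is the only place where genuinely different local pictures of ``two consecutive marks'' must be enumerated, and for each one has to line up the plane order $<_x$ (ascending children before descending children before the parent) with the orientation of $A_*$ to see that the $A_*$-arc is unique and that the switch from ascending to descending steps occurs exactly there. The second delicate point is the root, where the missing mark $\circled{*}$ changes the shape of the local walk and where the hypothesis on the orientation of the root edges is genuinely used.
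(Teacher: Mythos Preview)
The paper does not prove this lemma; it simply cites it as Lemma~8.7 of \cite{BEGLR}. So there is no in-paper proof to compare against, and your structural-induction argument via the dictionary ``child-to-parent step lies in $A_*$ iff the child is ascending; parent-to-child step lies in $A_*$ iff the child is descending'' is exactly the natural route and is carried out correctly for every $x\in V$.

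One point deserves more care. Your hypothesis $\#\Asc(*)=0$ at the root is not a convenience but a genuine requirement: if the root has both an ascending child $y_a$ and a descending child $z_1$, then between the last mark of $\DFS(y_a)$ and the first mark of $\DFS(z_1)$ the subwalk contains \emph{two} arcs of $A_*$ (namely $y_a\to *$ and $*\to z_1$), while across the cyclic seam from $\DFS(z_d)$ back to $\DFS(y_1)$ it contains \emph{none}. So the interlacing statement is false in that generality. Your justification ``which holds for the directed trees to which we apply the route map'' is correct for the setting of \cite{BEGLR}, where the root arises by subdividing the anti-parallel edge and hence has $\#\Asc(*)=0$ and $\#\Des(*)=2$. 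In the present paper, however, the route map is also built for composite arrowflows, whose root may well have ascending children; the interlacing lemma is simply not invoked in that case (the composite analysis in Corollary~\ref{cor:composite NIP} uses Lemma~\ref{lem: DFS} for non-root vertices and handles the root via the derangement network instead). It would sharpen your write-up to state the root hypothesis explicitly as part of the lemma rather than as an incidental feature of the application.
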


\subsection{The local network of the root}
When $\#\Asc(*) = 0$ and $\#\Des(*) = 2$, then $\Gamma_\mathcal{R}(*)$ admits a unique non-intersecting path, \cite[Lemma 7.6]{BEGLR}. 
Recall that when talking about the network structure of the route map, one can identify the underlying map with a permutation in $\Sym_V$.
\begin{p}\label{prop: dfs da un ciclo}
    Let $T_*$ be a tree with a plane rooted tree structure induced by an arrowflow $A_*$. If $\#\Asc(*) = 0$ and $\#\Des(*) = 2$, then there exists a unique non-intersecting family of network paths in $\mathcal{R}$. Furthermore, its underlying permutation is the cycle of $\Sym_V$ produced by the markings of the marked DFS walk on $T_*$.
\end{p}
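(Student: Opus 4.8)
\emph{Plan.} The proposition has two halves, existence and uniqueness of a non-intersecting family of network paths in $\mathcal{R}$, and the description of its underlying permutation, and I would treat them in that order.

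For the first half I would use the decomposition of $\mathcal{R}$ into local networks from the proof of Lemma~\ref{lem:underlying map}: $\mathcal{R}$ is an iterated $\doubleplus$-concatenation of the Southern local networks $\Gamma_\mathcal{S}(x)$ $(x\in V)$ in post-order, then $\Gamma_\mathcal{S}(*)$, then the bridge network $\mathcal{B}$, then $\Gamma_\mathcal{N}(*)$, then the Northern local networks $\Gamma_\mathcal{N}(x)$ $(x\in V)$ in pre-order. The one general fact I need is that, in any legal concatenation $\mathcal{G}\doubleplus\mathcal{G}'$, the \emph{non-intersecting} families correspond under the bijection of Lemma~\ref{lem: union of networks} exactly to the pairs $(\Lambda,\Lambda')$ of non-intersecting families: the nodes shared by $\mathcal{G}$ and $\mathcal{G}'$ are graph-theoretic sinks of $\mathcal{G}$ and graph-theoretic sources of $\mathcal{G}'$, so any path touching such a node ends (resp. begins) there, whence two glued paths can only collide at a shared node, where each is the unique path of its family through that node. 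Iterating, a non-intersecting family of $\mathcal{R}$ is exactly a choice of one in each local network; these are unique by Lemmas~\ref{lem: underlying permutation south} and~\ref{lem: underlying permutation north} for $\Gamma_\mathcal{S}(x),\Gamma_\mathcal{N}(x)$ with $x\in V$, by construction for $\mathcal{B}$, and --- crucially using $\#\Asc(*)=0$ and $\#\Des(*)=2$ --- by \cite[Lemma~7.6]{BEGLR} for $\Gamma_\mathcal{R}(*)=\Gamma_\mathcal{S}(*)\doubleplus\mathcal{B}(*)\doubleplus\Gamma_\mathcal{N}(*)$. Hence $\mathcal{R}$ has a unique non-intersecting family $\Lambda$.

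For the second half, write $(\partial*, <_*)=(j_1<_*j_2)$; since $\#\Asc(*)=0$ both children are descending. By Lemma~\ref{lem:underlying map} the underlying permutation of $\Lambda$ is $\overline{\nu}_{j_2}\circ\overline{\nu}_{j_1}\circ\rho_*\circ\overline{\sigma}_{j_1}\circ\overline{\sigma}_{j_2}$, where $\rho_*$ is the underlying map of the family in $\Gamma_\mathcal{R}(*)$. I would first compute $\rho_*$ from the explicit local graph of the root (the step that uses $\#\Des(*)=2$, $\#\Asc(*)=0$ essentially): it crosses the two subtrees, sending the exit node $e(j_1,*)$ of the first to the entrance node $e'(*,j_2)$ of the second, and $e(j_2,*)\mapsto e'(*,j_1)$. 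Then I would invoke Lemma~\ref{lem: DFS} for $x=j_1$ and $x=j_2$: if $u_1\,u_2\cdots u_p$ and $v_1\,v_2\cdots v_q$ are the marked subsequences of $\DFS(j_1)$ and $\DFS(j_2)$, it records the relay contributions of the two subtrees in terms of the external nodes $e'(*,j_i)$, $\Delta_{u_\bullet}$, $\Delta_{v_\bullet}$, $\nabla_{u_\bullet}$, $\nabla_{v_\bullet}$, $e(j_i,*)$, so that internally $\Delta_{u_i}\mapsto\nabla_{u_{i+1}}$ and $\Delta_{v_i}\mapsto\nabla_{v_{i+1}}$ while $\Delta_{u_p}\mapsto e(j_1,*)$, $\Delta_{v_q}\mapsto e(j_2,*)$ and $e'(*,j_1)\mapsto\nabla_{u_1}$, $e'(*,j_2)\mapsto\nabla_{v_1}$. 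Composing through the crossing $\rho_*$ splices the two chains $u_1\mapsto\cdots\mapsto u_p$ and $v_1\mapsto\cdots\mapsto v_q$ into the single cycle
\[
(\, u_1 ~ u_2 ~ \cdots ~ u_p ~ v_1 ~ v_2 ~ \cdots ~ v_q \,)\in\Sym_V.
\]
Since $\DFS(*)=*\,\DFS(j_1)\,*\,\DFS(j_2)\,*$ and the root carries no mark, the marked subsequence of $\DFS(*)$ is precisely $u_1\cdots u_p\,v_1\cdots v_q$, so this cycle is exactly the one determined by the marks of the marked DFS walk on $T_*$, as claimed.

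\emph{Main obstacle.} The first half is routine once the concatenation principle is isolated. The delicate part is the second half: composing the partial bijections $\overline{\sigma}_{j_i},\rho_*,\overline{\nu}_{j_i}$, whose domains and codomains overlap only partially; pinning down $\rho_*$ from the combinatorics of $\Gamma_\mathcal{R}(*)$ (equivalently, extracting it from \cite[Lemma~7.6]{BEGLR}); and verifying that the crossing at the root glues the two subtree chains into a \emph{single} cycle traversed in the correct DFS order, rather than into two separate cycles.
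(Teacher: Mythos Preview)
Your plan is sound and, in spirit, is exactly what the paper does. The paper's own proof is a one-line citation to \cite[Prop.~8.10 and Lemma~8.11]{BEGLR}; your proposal is essentially an unfolding of that citation using the pieces already assembled in this section (Lemmas~\ref{lem: union of networks}, \ref{lem: underlying permutation south}, \ref{lem: underlying permutation north}, \ref{lem:underlying map}, \ref{lem: DFS}, and \cite[Lemma~7.6]{BEGLR} for the root). So there is no genuine methodological difference --- you are reconstructing the cited argument rather than finding a new one.

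Two small remarks on execution. First, your key lemma that non-intersecting families in $\mathcal{G}\doubleplus\mathcal{G}'$ correspond to pairs of non-intersecting families is correct, and your justification is fine; the only point to make explicit is that in a family every sink of $\mathcal{G}$ is hit exactly once (since $\#\Delta=\#\nabla$), so at each shared node exactly one glued path passes through. Second, in the permutation half, Lemma~\ref{lem: DFS} gives you $\bar\nu_{j_i}\circ\bar\sigma_{j_i}$ as a single object, while Lemma~\ref{lem:underlying map} presents the composite with $\rho_*$ sandwiched in the middle; to regroup you are implicitly using that the subtrees $T_{j_1}$ and $T_{j_2}$ are vertex-disjoint, so the maps for $j_1$ act as the identity on the sources/sinks attached to $T_{j_2}$ and vice versa under the generalized composition. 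Once that is said, your trace through $\rho_*$ (the crossing $e(j_1,*)\mapsto e'(*,j_2)$, $e(j_2,*)\mapsto e'(*,j_1)$) is correct, and splicing the two DFS chains into the single cycle $(u_1\cdots u_p\,v_1\cdots v_q)$ is exactly the content of \cite[Lemma~8.11]{BEGLR}.
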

\begin{proof}
    This is Proposition 8.10 and Lemma 8.11 in \cite{BEGLR}.
\end{proof}

When the assumption $\#\Asc(*) = 0$ and $\#\Des(*) = 2$ is dropped, the analysis is slightly more complicated, but of remarkable elegance. A crucial step in the analysis is the first path-enumerative proof of the following result, given in \cite{derangements}. For another bijective proof, see \cite{Chapman}.

\begin{thm}
    \label{thm:enumeration of derangements}
    The signed enumeration of derangements is
    \[
    \sum_{\substack{\sigma \in \Sym_n\\ \text{derangement}}} \sgn(\sigma) = (-1)^{n-1}(n-1).
    \]
\end{thm}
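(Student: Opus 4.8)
The plan is to recognize the signed count as a determinant and evaluate it by diagonalization. First I would note that $\sigma\in\Sym_n$ is a derangement exactly when $\sigma(i)\neq i$ for all $i$, so with $M$ the $n\times n$ matrix having $M_{ij}=1$ for $i\neq j$ and $M_{ii}=0$ --- that is, $M=J-I$ where $J$ is the all-ones matrix --- the Leibniz expansion $\det M=\sum_{\sigma\in\Sym_n}\sgn(\sigma)\prod_i M_{i\sigma(i)}$ retains precisely the terms indexed by derangements, each equal to $\sgn(\sigma)$. Hence
\[
\sum_{\substack{\sigma\in\Sym_n\\\text{derangement}}}\sgn(\sigma)=\det(J-I).
\]
Then I would diagonalize $J$: it has rank one and trace $n$, so its spectrum is $n$ (with multiplicity $1$, eigenvector $(1,\dots,1)$) and $0$ (with multiplicity $n-1$, eigenspace $\{x:\sum_i x_i=0\}$); thus $J-I$ has eigenvalues $n-1$ (once) and $-1$ (with multiplicity $n-1$), and $\det(J-I)=(-1)^{n-1}(n-1)$, which is the claim.

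A variant closer to the combinatorial flavour of the paper is inclusion--exclusion over fixed points. Set $g(k):=\sum_{\tau\in\Sym_k}\sgn(\tau)$, so that $g(0)=g(1)=1$ and $g(k)=0$ for $k\geq 2$. Since the permutations fixing a prescribed $k$-subset $T\subseteq[n]$ pointwise are in sign-preserving bijection with $\Sym_{[n]\smallsetminus T}$ (the fixed $1$-cycles do not affect the sign), applying inclusion--exclusion to the events $\{\sigma(i)=i\}$ gives
\[
\sum_{\substack{\sigma\in\Sym_n\\\text{derangement}}}\sgn(\sigma)=\sum_{k=0}^{n}(-1)^k\binom{n}{k}\,g(n-k)=(-1)^n+(-1)^{n-1}n=(-1)^{n-1}(n-1),
\]
since only the terms $k=n$ and $k=n-1$ survive.

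Neither of these arguments is hard, so I do not expect a genuine obstacle at this level. The real difficulty appears only if one insists on a fully bijective proof in the style of the rest of the paper --- presumably the reason the path-enumerative proof of \cite{derangements} is worth citing. There one would need an explicit sign-reversing involution on the set of derangements of $\Sym_n$ whose fixed-point set has exactly $n-1$ elements, all of sign $(-1)^{n-1}$. The hard part will be designing this involution rather than invoking linear algebra: my attempt would mimic the standard involution witnessing $\sum_{\sigma\in\Sym_n}\sgn(\sigma)=0$ --- splitting or merging the cycle through $1$ with the cycle through the least vertex it fails to reach --- while carving out a canonical family of $n-1$ survivors of the correct parity, whose precise description depends on the parity of $n$.
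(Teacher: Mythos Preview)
Your two arguments are correct; both the determinant-of-$(J-I)$ computation and the inclusion--exclusion count are standard and airtight. However, neither is the route the paper takes. In fact the paper does not prove this theorem at all: it quotes the identity and points to \cite{derangements} for a path-enumerative proof and to \cite{Chapman} for an involutive one. The surrounding text makes clear why: the paper wants every signed identity to be witnessed by a sign-reversing involution (ultimately via the LGV lemma), so that the proof of Theorem~A is bijective end to end. The relevant combinatorial object is the derangement network $\mathcal{D}_n$ of Lemma~\ref{lem:derangement network}, and the cited proof realizes the signed derangement count as a signed enumeration of families of network paths in $\mathcal{D}_n$, then cancels intersecting families via LGV.

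So the comparison is this: your arguments are shorter and more elementary, and they settle the numerical claim immediately; what they do not buy is compatibility with the paper's bijective programme. Your final paragraph already diagnoses this correctly --- the genuine content, from the paper's point of view, is precisely the explicit involution (or the LGV realization on $\mathcal{D}_n$) that your linear-algebra proof bypasses. If you only need the formula, your proof is fine; if you need it as a lemma inside a fully involutive proof of Theorem~A, you would have to supply the network argument from \cite{derangements} or the involution from \cite{Chapman}.
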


The enumeration of derangements is reduced to a path enumeration problem on the following network.

\begin{de}
    Fix an integer $n\ge2$.
    Let $\mathcal{D}_n$ be a directed graph with
    \begin{itemize}
        \item two nodes $\Delta_i$ and $\nabla_i$ for each $1\le i\le n$, and
        \item two nodes $s(i,i+1)$ and $s(i+1,i)$ for each $1\le i < n$, and
    \end{itemize}
    We refer to the nodes as \emph{sources},  \emph{sinks}, and \emph{steps}, respectively.
    The arc-set has
    \begin{itemize}
        \item arcs $\big(\Delta_i, s(i,i+1)\big)$ and $\big(\Delta_{i+1}, s(i+1,i)\big)$ for each $1\le i < n$, 
        \item arcs $\big(s(i+1,i), \nabla_i\big)$ and $\big(s(i,i+1), \Delta_{i+1}\big)$ for each $1\le i < n$, and
        \item arcs $\big(s(i,i+1), s(i+1,i+2)\big)$ 
        and $\big(s(i+2,i+1), s(i+1,i)\big)$ 
        for each $1 \le i \le n-2$.
    \end{itemize}
    The \emph{derangement network of $\Sym_n$} is the network
    \(
    \big(\mathcal{D}_n, (\Delta_i)_{i\in[n]}, (\nabla_i)_{i\in[n]}\big).
    \)
    See Figure \ref{fig: derangement graph}.
\end{de}

\begin{figure}[h]
\[\tiny\begin{tikzcd}[cramped, column sep = tiny]
	& {s(1,2)} && {s(2,3)} && {s(3,4)} \\
	{\Delta_1} && {\Delta_2} && {\Delta_3} && {\Delta_4} \\
	{\color{gray}\nabla_1} && {\color{gray}\nabla_2} && {\color{gray}\nabla_3} && {\color{gray}\nabla_4} \\
	& {s(2,1)} && {s(3,2)} && {s(4,3)}
	\arrow[from=1-2, to=1-4]
	\arrow[bend right, gray, from=1-2, to=3-3]
	\arrow[from=1-4, to=1-6]
	\arrow[bend right, gray, from=1-4, to=3-5]
	\arrow[bend right, gray, from=1-6, to=3-7]
	\arrow[from=2-1, to=1-2]
	\arrow[from=2-3, to=1-4]
	\arrow[bend right, crossing over, from=2-3, to=4-2]
	\arrow[from=2-5, to=1-6]
	\arrow[bend right, crossing over, from=2-5, to=4-4]
	\arrow[bend right, crossing over, from=2-7, to=4-6]
	\arrow[gray, from=4-2, to=3-1]
	\arrow[gray, from=4-4, to=3-3]
	\arrow[from=4-4, to=4-2]
	\arrow[gray, from=4-6, to=3-5]
	\arrow[from=4-6, to=4-4]
\end{tikzcd}\]
    \caption{The derangement network of $\Sym_4$.}
    \label{fig: derangement graph}
\end{figure}

In plain words, the derangement network has two paths, one recording ``increasing steps'' and one recording ``decreasing steps''. A network path starts in a source corresponding to a number $i \in [n]$ and decides whether to increase or decrease. It follows one of the two paths accordingly and exits at a sink, corresponding to another number $j\in[n]$. In a family of network paths, ``increasing steps'' are visited by the path corresponding to \emph{excedances} $i < \sigma(i)$ of the underlying permutation $\sigma$ of the family, and ``decreasing steps'' by the \emph{anti-excedances}.

\begin{lem}
\label{lem:derangement network}
    For each derangement $\sigma\in\Sym_n$, there is a unique family of network paths $\Lambda_\sigma$ in $\mathcal{D}_n$ with underlying permutation $\sigma$. This is furthermore a bijection between $\FNP(\mathcal{D}_n)$ and the set of derangements of $\Sym_n$.
\end{lem}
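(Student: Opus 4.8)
The plan is to reduce the statement to a single structural observation about $\mathcal{D}_n$: for every $i\in[n]$, the network $\mathcal{D}_n$ admits exactly one directed path from the source $\Delta_i$ to the sink $\nabla_j$ whenever $j\ne i$, and no directed path from $\Delta_i$ to $\nabla_i$. To prove this I would simply read off the out-neighbourhoods prescribed by the arc-set. From $\Delta_i$ the only outgoing arcs go to $s(i,i+1)$ (present when $i<n$) and to $s(i,i-1)$ (present when $i>1$). From an ``increasing'' step-node $s(k,k+1)$ the only outgoing arcs go to $s(k+1,k+2)$ (present when $k<n-1$) and to the sink $\nabla_{k+1}$; from a ``decreasing'' step-node $s(k,k-1)$ they go to $s(k-1,k-2)$ (present when $k>2$) and to $\nabla_{k-1}$. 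Hence, after its first step, a path out of $\Delta_i$ is committed either to the monotone chain $s(i,i+1)\to s(i+1,i+2)\to\cdots$, from which the only way into a sink is to peel off to some $\nabla_j$ with $j\in\{i+1,\dots,n\}$, and in a unique way for each such $j$; or to the chain $s(i,i-1)\to s(i-1,i-2)\to\cdots$, from which the only reachable sinks are $\nabla_j$ with $j\in\{1,\dots,i-1\}$, again uniquely. Since $\mathcal{D}_n$ is acyclic, walks and paths coincide, so there is nothing to check about revisited vertices, and the observation follows.

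Granting this, the lemma is immediate. Given a derangement $\sigma\in\Sym_n$, let $\Lambda_i$ be the unique path from $\Delta_i$ to $\nabla_{\sigma(i)}$, which exists because $\sigma(i)\ne i$; since $\sigma$ is injective the endpoints $\nabla_{\sigma(i)}$ are pairwise distinct, so $\Lambda_\sigma:=(\Lambda_i)_{i\in[n]}$ lies in $\FNP(\mathcal{D}_n)$ and has underlying map $\sigma$ by construction. Conversely, let $\Lambda=(\Lambda_i)_{i\in[n]}\in\FNP(\mathcal{D}_n)$ with underlying permutation $\tau$. Then $\Lambda_i$ is a path from $\Delta_i$ to $\nabla_{\tau(i)}$; the observation forces $\tau(i)\ne i$ for every $i$, so $\tau$ is a derangement, and it forces $\Lambda_i$ to be the unique such path, so $\Lambda=\Lambda_\tau$. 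Therefore $\sigma\mapsto\Lambda_\sigma$ and ``take the underlying permutation'' are mutually inverse bijections between the derangements of $\Sym_n$ and $\FNP(\mathcal{D}_n)$; the uniqueness clause for a fixed $\sigma$ is precisely the statement that the second map is well defined.

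I do not expect a genuine obstacle here: the only slightly delicate point is the bookkeeping of the boundary cases in the arc-list (the sources $\Delta_1$ and $\Delta_n$, and the two ends of each monotone chain of step-nodes), which throughout is governed by the ``two one-way streets'' picture described just before the lemma. It is also worth recording, for use in the sequel, that the indices routed through the increasing chain of step-nodes are exactly the excedances of the underlying permutation and those routed through the decreasing chain exactly the anti-excedances.
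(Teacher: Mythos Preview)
Your argument is correct: the key structural observation that there is exactly one directed path in $\mathcal{D}_n$ from $\Delta_i$ to $\nabla_j$ when $j\neq i$ and none when $j=i$ follows cleanly from the out-neighbourhood analysis you give, and the bijection with derangements is then immediate. The paper itself does not prove this lemma at all; it simply cites the companion manuscript \cite{derangements}, so your write-up actually supplies what the paper omits. The informal ``two one-way streets'' description preceding the lemma in the paper is exactly the picture you formalize, and your closing remark about excedances and anti-excedances matches the paper's own commentary, so in spirit the arguments coincide even though the paper defers the details elsewhere.
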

\begin{proof}
See \cite{derangements}.
\end{proof}

It turns out that derangement networks play an important role in our study of the minors of the distance matrix of $T$. The relationship is as follows: 

\begin{p} \label{prop:derangements1}
    Fix a composite $m$-arrowflow $A$. Let $\mathcal{R}$ be the corresponding route map of $A/\sim$.
    Let $a = \#\Asc(*)$, let $d = \#\Des(*)$, and consider the local network $\Gamma_{\mathcal{R}}(*)$.
    There is a graph endomorphism from $\mathcal{D}_a\times\mathcal{D}_d$ to $\Gamma_\mathcal{R}(*)$ inducing a bijection between the sets of non-intersecting families of network paths.
\end{p}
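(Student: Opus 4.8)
The plan is to unwind the definition of $\Gamma_\mathcal{R}(*)$ completely and then build the morphism by hand. Write $(\partial *,<_*) = (y_1 <_* \cdots <_* y_a <_* z_1 <_* \cdots <_* z_d)$ for the children of $*$, with the $y_k$ ascending and the $z_l$ descending. Since $\Gamma_\mathcal{R}(*) = \Gamma_\mathcal{S}(*)\doubleplus\mathcal{B}(*)\doubleplus\Gamma_\mathcal{N}(*)$, Lemma \ref{lem: union of networks} reduces the study of $\FNP(\Gamma_\mathcal{R}(*))$ to that of $\FNP(\Gamma_\mathcal{S}(*))$ and $\FNP(\Gamma_\mathcal{N}(*))$ — the bridge network $\mathcal{B}(*)$ carries a unique family — and a family of $\Gamma_\mathcal{R}(*)$ is non-intersecting exactly when both of its components are. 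So the heart of the matter is to pin down the non-intersecting families of $\Gamma_\mathcal{S}(*)$ and of $\Gamma_\mathcal{N}(*)$.

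The key structural observation is that, because all ascending children precede all descending children in $<_*$, the digraph $\Gamma_\mathcal{S}(*)$ read off Figure \ref{Gamma i} (with $i=*$ and $\Delta_*$ deleted) decouples: the $e$-nodes $e(j,*)$ oriented towards $*$ have no incoming arcs, and $e(*,y_k)$ is never a sink, so in any family of network paths every ascending source $e(y_k,*)$ is matched to its twin sink $e(y_k,*)$ along the trivial path, no step-node between two ascending neighbours (nor the boundary node $s_*(y_a,z_1)$) is visited, and what survives — the sources $e(z_l,*)$, the sinks $e(*,z_l)$, and the step-nodes $s_*(z_l,z_{l\pm1})$ — is, arc for arc, the derangement network $\mathcal{D}_d$ under $e(z_l,*)\leftrightarrow\Delta_l$, $e(*,z_l)\leftrightarrow\nabla_l$, $s_*(z_l,z_{l\pm1})\leftrightarrow s(l,l\pm1)$. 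Passing to the mirror $T_*'$ interchanges ``ascending at $*$'' with ``descending at $*$'' and reverses $<_*$; chasing the relabelling $\Psi$ and the reversal of arcs through the identical computation shows dually that in $\Gamma_\mathcal{N}(*)$ the descending sources and sinks are matched only trivially, while the ascending ones together with the step-nodes between consecutive ascending neighbours form a copy of $\mathcal{D}_a$. Splicing through the bridges of $\mathcal{B}(*)$ — which glue each descending sink $e(*,z_l)$ of $\Gamma_\mathcal{S}(*)$ to the trivially-matched node $e'(*,z_l)$ of $\Gamma_\mathcal{N}(*)$, and each trivially-matched node $e(y_k,*)$ of $\Gamma_\mathcal{S}(*)$ to the ascending source $e'(y_k,*)$ of $\Gamma_\mathcal{N}(*)$ — a non-intersecting family of $\Gamma_\mathcal{R}(*)$ becomes exactly a pair of non-intersecting families, one in the $\mathcal{D}_d$-copy inside $\Gamma_\mathcal{S}(*)$ and one in the $\mathcal{D}_a$-copy inside $\Gamma_\mathcal{N}(*)$, i.e.\ a non-intersecting family in $\mathcal{D}_a\times\mathcal{D}_d$. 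The announced graph endomorphism is the digraph map carrying $\mathcal{D}_a$ and $\mathcal{D}_d$ onto these two copies (sources to the nodes $e(y_k,*)$ and $e(z_l,*)$, sinks to the nodes $e'(*,y_k)$ and $e'(*,z_l)$, step-nodes to step-nodes, the bridge arcs absorbed), and the correspondence on families just described is the claimed bijection; one also reads off that the underlying permutation of the image family is the disjoint product of the two underlying derangements, which is the shape in which Proposition \ref{prop:derangements1} is used later.

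I expect the main obstacle to be exactly this bookkeeping: the hemisphere, the local networks, the bridges and the mirror operation are all imported from \cite{BEGLR} and are genuinely intricate, so one must be scrupulous — most of all on the Northern side, where it is the combination of arc-reversal and $\Psi$ that converts the ``$e(z,*)$ oriented in'' nodes into trivially-matched sinks and that flips which side of $*$ realises $\mathcal{D}_a$. The degenerate cases are useful checkpoints: $a=0$ (resp.\ $d=0$) collapses $\Gamma_\mathcal{R}(*)$ to a single copy of $\mathcal{D}_d$ (resp.\ $\mathcal{D}_a$); $d=1$ forces the empty network $\mathcal{D}_1$ to match the fact that the lone descending source of $\Gamma_\mathcal{S}(*)$ then reaches no sink; and $a=0$, $d=2$ recovers Proposition \ref{prop: dfs da un ciclo}.
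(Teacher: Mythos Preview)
Your proof is correct and takes essentially the same approach as the paper: both pin the ascending paths to trivial behaviour in the Southern local graph (since $e(y_k,*)$ is a graph-theoretic source there) and then to a copy of $\mathcal{D}_a$ in the Northern local graph, and dually for the descending paths, arriving at the same pair of subnetworks and the same explicit identification with $\mathcal{D}_a\times\mathcal{D}_d$. The only cosmetic difference is that you invoke Lemma~\ref{lem: union of networks} up front to decompose $\Gamma_\mathcal{R}(*)=\Gamma_\mathcal{S}(*)\doubleplus\mathcal{B}(*)\doubleplus\Gamma_\mathcal{N}(*)$ and analyse the factors separately, whereas the paper argues directly in $\Gamma_\mathcal{R}(*)$ via bridge-counting (each of the $\delta(*)$ paths must take a distinct bridge, which forces the ascending sources across their bridges immediately and the descending sinks across theirs at the last step).
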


\begin{proof}
  The first thing to notice is that every network path of the family has to start at the Southern local network and end at the Northern local network, so it has to travel through a bridge between hemispheres. 
    There are as many bridges as paths, so in order for a family to not be intersecting, each path should take a different bridge.
    If $j\in\Asc(*)$ is an ascending child of the root, then $e(j,*)$ is both a source and the start of a bridge; in a non-intersecting family of paths, the path $\Lambda_j$ starting at $e(j,*)$ must also be the unique path taking the bridge $\big(e(j,*),e'(j,*)\big)$.
    Note that when $\#\Asc(*) = 1$, this path cannot end in any sink; there exists no non-intersecting family in this case. Similarly, if $j\in\Des(*)$, then the path ending at $e'(*,j)$ must also be the unique path taking the bridge $\big(e(*,j),e'(*,j)\big)$; and if $\#\Des(*)=1$ there exists no non-intersecting family.

    Altogether, we have shown that for a family $\Lambda\in\FNP(\Gamma_{\mathcal{R}}(*))$ to be non-intersecting, it has to split into two non-intersecting families of network paths in the networks
    \begin{align*}
     \Gamma^{\Asc}(*) &:= \Big(
    \Gamma_{\mathcal{N}}(*), \big(e'(j,*)\big)_{j\in\Asc(*)}, \big(e'(*,j)\big)_{j\in\Asc(*)}
    \Big),
    \text{ and}\\
    \Gamma^{\Des}(*) &:= \Big(\Gamma_{\mathcal{S}}(*), \big(e(j,*)\big)_{j\in\Des(*)}, \big(e(*,j)\big)_{j\in\Des(*)}
    \Big).   
    \end{align*}
    Finally, consider the map
    \begin{align*}
        \mathcal{D}_{\Asc(*)} &\to \Gamma^{\Asc}(*)\\
        \Delta_i &\mapsto e'(i,*)\\
        \nabla_i &\mapsto e'(*,i)\\
        s(i,i+1) &\mapsto s_*(i, i+1).
    \end{align*}
    This is an endomorphism of digraphs when $\#\Asc(*)\ge2$, and by the above, the map induces a bijection between the sets of non-intersecting paths of the networks.
    Similarly, there is an endomorphism $\mathcal{D}_{\Des(*)} \to \Gamma^{\Des}(*)$  whenever $\#\Des(*)\ge2$ and the analogous property holds.
\end{proof}

\section{Unital arrowflows}
\label{sec:unital}

This section is dedicated to the proof of Proposition \ref{prop:unital}. 
Let $A$ be a unital arrowflow. We have 
\[
\edge{\SG(A)} = (n-1) - \edge{\missing{A}} = \cc{\missing{A}} - 1 = m-1
\]
and $\edge{A} = m$. Therefore, there is exactly one edge $\{u,v\}$ of $\SG(A)$ in which there are two arcs of $A$ are supported. Since $A$ has no parallel arrows, these two arcs must be $(u,v)$ and $(v,u)$; we call them \emph{anti-parallel arrows}.

\begin{lem}
    Fix $F\in\mathcal{F}_1(T;S)$ an $S$-rooted forest of $T$.
    We have 
    \[
    \#\{A~\text{unital, }\, \missing{A} = F\} = (m-1)2^{m-2}.
    \]
\end{lem}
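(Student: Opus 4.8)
The plan is to count the unital arrowflows $A$ with $\missing{A} = F$ by decomposing such a count into two independent choices: first, which edge of $\SG(A)$ carries the anti-parallel pair of arrows, and second, an orientation of the remaining edges of $\SG(A)$. Since $\missing{A} = F$ is fixed, the underlying simple graph $\SG(A) = F^\comp$ is also fixed, and it has exactly $m-1$ edges as computed just before the lemma. So $A$ is determined by (i) a choice of one edge $\{u,v\}$ of $F^\comp$ to be doubled (carrying both $(u,v)$ and $(v,u)$), and (ii) a choice of orientation for each of the other $m-2$ edges of $F^\comp$. That would give $(m-1)\cdot 2^{m-2}$ if every such pair of choices actually yields a \emph{unital} arrowflow with missing forest exactly $F$.

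The content of the proof is therefore to check that every arrowflow arising this way is indeed unital with $\missing{A}=F$, and conversely that every unital $A$ with $\missing{A}=F$ arises from a unique such pair of choices. The converse is essentially the discussion preceding the lemma: a unital $A$ with $\missing{A} = F$ has $\SG(A) = F^\comp$ with $m-1$ edges and $m$ arcs counted with multiplicity, no parallel arrows, so exactly one edge is doubled by an anti-parallel pair and the rest carry a single arc each; reading off the doubled edge and the orientations recovers the pair of choices, and distinct pairs give distinct arrowflows. For the forward direction, given $\{u,v\}$ and an orientation of the other edges, the resulting $A$ has arc multiset of size $m$, underlying simple graph $F^\comp$, no parallel arrows (the only repeated arc-support is $\{u,v\}$, and it carries the two distinct arcs $(u,v),(v,u)$), and $\missing{A} = (F^\comp)^\comp = F \in \Srooted$; hence $A$ is unital by definition.

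I would write this as: fix the bijection between the set $\{A \text{ unital} : \missing{A} = F\}$ and the set of pairs $(e, o)$ where $e \in E(F^\comp)$ and $o$ is an orientation of $F^\comp - e$, verify well-definedness and bijectivity in a sentence each using the edge/arc count $\edge{F^\comp} = m-1$, and then conclude that the cardinality is $\#E(F^\comp) \cdot 2^{\#E(F^\comp)-1} = (m-1)2^{m-2}$.

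I do not expect a genuine obstacle here; the only point requiring a little care is making sure the "doubled edge" is genuinely forced to be an anti-parallel pair rather than a true parallel pair — but this is exactly the no-parallel-arrows hypothesis in the definition of unital, already invoked in the paragraph above the lemma, so it costs only one clause. The bookkeeping with $\edge{\SG(A)} = m-1$ and $\edge{A} = m$ is routine and already done.
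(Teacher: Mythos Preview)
Your proposal is correct and takes essentially the same approach as the paper: fix $\SG(A) = F^\comp$ with $m-1$ edges, choose the edge supporting the anti-parallel pair in $m-1$ ways, and orient each of the remaining $m-2$ edges independently. The paper's proof is terser but the argument is identical.
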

\begin{proof}
    We have $\SG(A) = F^\comp$ and thus $\edge{\SG(A)} = m-1$.
    The factor $(m-1)$ counts the number of ways of choosing the edge of $\SG(A)$ in which the anti-parallel arrows are supported. For each of the $m-2$ remaining edges, there are two possible orientations.
\end{proof}

Recall that the quotient arrowflow $A/\sim$ is a \emph{complete} unital $m$-arrowflow on $T/\sim ~= (S, E/\sim)$. Let $\{u, v\}$ be the edge of $T/\sim$ in which the anti-parallel arrows are supported.
We construct a rooted tree $(T/\sim)_*$ from $T/\sim$ by subdividing $\{u, v\}$, and calling the newly introduced vertex $*$. Similarly, we construct a complete $m$-arrowflow $(A/\sim)_*$ on $(T/\sim)_*$ with arc-set
\[
\{(i,j) \in A/\sim \ : \ \{i,j\}\ne\{u, v\}\} \cup \{(*,u), (*, v)\}.
\]

\begin{lem}
\label{lem:sum of C_S(A) unital}
 If $A$ is unital, then   
\[  \sum_{\kappa \in C_S(A)} \sgn(\kappa) = (-1)^{m - 1}.
\]
\end{lem}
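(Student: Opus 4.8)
The plan is to chain together three reductions, the last one landing on a path-enumeration statement already proved. \emph{First reduction.} By Lemma~\ref{lem:C_S(A) sum equals C_S(A/sim)} we have $\sum_{\kappa\in C_S(A)}\sgn(\kappa)=\sum_{\kappa\in C_S(A/\sim)}\sgn(\kappa)$, so it suffices to treat the complete unital $m$-arrowflow $A/\sim$ on $T/\sim=(S,E/\sim)$, whose unique anti-parallel pair lies on the edge $\{u,v\}$, with $(T/\sim)_*$ and $(A/\sim)_*$ as constructed just above.

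\emph{Second reduction: undo the anti-parallel pair.} I would build a sign-preserving bijection between $C_S(A/\sim)$ and the set of $S$-catalysts for $(T/\sim)_*$ that induce $(A/\sim)_*$. Any $(\sigma,f)\in C_S(A/\sim)$ has exactly one vertex $p$ with $f(p)=(u,v)$ and exactly one vertex $q$ with $f(q)=(v,u)$; send it to $(\sigma,f')$ with $f'(p):=(*,v)$, $f'(q):=(*,u)$, and $f'(i):=f(i)$ otherwise. Since $f(p)=(u,v)$ lies on $P(p,\sigma(p))$ oriented along it, the path from $p$ to $\sigma(p)$ in $(T/\sim)_*$ runs through $u$, then $*$, then $v$, so $(*,v)$ lies on it with the right orientation, and symmetrically for $q$; the other values of $f$ are unaffected by subdividing $\{u,v\}$. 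Hence $(\sigma,f')$ is a valid $S$-catalyst whose induced arrowflow is exactly $(A/\sim)_*$. The inverse assignment replaces $(*,v)$ by $(u,v)$ and $(*,u)$ by $(v,u)$; it is well defined because $(u,*)$ and $(v,*)$ are not arcs of $(A/\sim)_*$, so $p$ and $q$ are forced. As $\sigma$ is untouched, signs are preserved, and the catalyst sum for $A/\sim$ equals the catalyst sum for $(A/\sim)_*$.

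\emph{Third reduction: the route map.} The rooted directed tree $(A/\sim)_*$ on $(T/\sim)_*$ has a root $*$ with $\#\Asc(*)=0$ and $\#\Des(*)=2$. Fix any plane rooted directed tree structure on $(T/\sim)_*$ induced by $(A/\sim)_*$ and let $\mathcal{R}$ be its route map. By part~(2) of Proposition~\ref{prop:lift paths to routemap}, the set of $S$-catalysts inducing $(A/\sim)_*$ is in bijection with the full families of network paths of $\mathcal{R}$, and under this bijection a catalyst $(\sigma,f)$ is sent to a family with underlying permutation $\sigma$, so it is sign-preserving. Now apply the LGV Lemma~\ref{lem: LGV}: its involution stabilizes the multiset of steps of a family, hence preserves the property of using every bridge between hemispheres, i.e.\ of being full; restricting the involution to full families gives $\sum_{\Lambda\ \text{full}}\sgn(\Lambda)=\sum_{\Lambda\ \text{non-int.}}\sgn(\Lambda)$, since the fixed points are the non-intersecting families and these are all full. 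Finally, Proposition~\ref{prop: dfs da un ciclo} says $\mathcal{R}$ has a unique non-intersecting family and its underlying permutation is the cycle of $\Sym_S$ produced by the markings of the marked DFS walk on $(T/\sim)_*$; as that walk marks each of the $m$ vertices of $S$ exactly once, this is an $m$-cycle, of sign $(-1)^{m-1}$. Putting the reductions together,
\[
\sum_{\kappa\in C_S(A)}\sgn(\kappa)
=\sum_{\kappa\in C_S(A/\sim)}\sgn(\kappa)
=\sum_{\Lambda\ \text{full}}\sgn(\Lambda)
=\sum_{\Lambda\ \text{non-int.}}\sgn(\Lambda)
=(-1)^{m-1},
\]
where the middle equality combines the second and third reductions.

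The step I expect to cost the most care is the second reduction: verifying that splitting the anti-parallel pair into the two root-arcs $(*,u)$, $(*,v)$ is both forced and reversible, and that ``catalyst validity'' (each arc lying on the appropriate oriented path) survives both the subdivision of $\{u,v\}$ and its reverse. The remaining steps are direct appeals to the route-map constructions of \cite{BEGLR}, the LGV lemma, and Proposition~\ref{prop: dfs da un ciclo}, together with routine sign bookkeeping.
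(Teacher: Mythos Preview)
Your proof is correct and follows essentially the same route as the paper's: reduce to the quotient via Lemma~\ref{lem:C_S(A) sum equals C_S(A/sim)}, pass to the route map of $(A/\sim)_*$ via Proposition~\ref{prop:lift paths to routemap}, apply LGV, and finish with Proposition~\ref{prop: dfs da un ciclo}. Your ``second reduction'' makes explicit the identification $C_S(A/\sim)\cong C((A/\sim)_*)$ that the paper's proof silently absorbs into its invocation of Proposition~\ref{prop:lift paths to routemap}, and your justification that the LGV involution preserves fullness (by stabilizing the multiset of steps) is likewise a detail the paper leaves implicit.
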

\begin{proof}
By Lemma \ref{lem:C_S(A) sum equals C_S(A/sim)}, we have 
\[
\sum_{\kappa\in C_S(A)}\sgn(\kappa) = \sum_{\kappa\in C_S(A/\sim)} \sgn(\kappa)
\]

The quotient $A/\sim$ is a \emph{complete} unital $m$-arrowflow on $T/\sim$. We follow \cite{BEGLR} to conclude:
let $\mathcal{R}$ be the route map of the rooted directed tree $(A/\sim)_*$. By Proposition \ref{prop:lift paths to routemap}\eqref{propItem:bijection catalysts routemap} and the LGV lemma, we have
\[
\sum_{\kappa \in C_S(A/\sim)} \sgn(\kappa) = \sum_{\substack{\Lambda\in\FNP(\mathcal{R})\\\text{full}}} \sgn(\Lambda) = \sum_{\substack{\Lambda\in\FNP(\mathcal{R})\\\text{non-intersecting}}} \sgn(\Lambda).
\]
By Proposition \ref{prop: dfs da un ciclo}, there exists a unique  non-intersecting family $\bar\Lambda$ of network paths of $\mathcal{R}$, and its sign is $(-1)^{m-1}$.
We conclude
\[
\sum_{\kappa\in C_S(A)}\sgn(\kappa) =  \sum_{\substack{\Lambda\in\FNP(\mathcal{R})\\\text{non-intersecting}}} \sgn(\Lambda) = \sgn(\bar\Lambda) = (-1)^{m-1}. \qedhere
\]
\end{proof}

We emphasize that the arguments used in \cite{BEGLR} are involutive. In particular, our proof of Lemma \ref{lem:sum of C_S(A) unital} is also involutive.

\section{Composite arrowflows}
\label{sec:composite}

Fix $F$ an $(S,*)$-rooted spanning forest of $T$.
This section is devoted to the proof of Proposition \ref{prop:composite}, namely,
\[
\sum_{\substack{A\\\text{composite}\\ \missing{A} = F}}\sum_{\kappa\in C_S(A)} \sgn(\kappa) =(-1)^{m-1} 2^{m - 2}(\bdeg{F_*} - 1)(\bdeg{F_*} - 4).
\]
Recall that the vertex set $I$ of $T/\sim$ is $S\sqcup\{*\}$, where $*$ denotes the quotient projection of the floating component of $\missing{A}$.
We endow $T/\sim$ with a structure of rooted tree by letting $*$ be the root.
We then note that $A/\sim$ is a rooted directed tree supported on $T/\sim$.
Endow $T/\sim$ with a plane directed tree structure induced by $A/\sim$.
We let $\mathcal{R}$ denote the \emph{route map} of $A/\sim$.

We will shortly compute a formula for the signed enumeration of the catalysts in a composite arrowflow class.
This proof will follow the structure of the analogous result for unital arrowflows.

\begin{p}
\label{prop:derangements2}
    Fix a composite $m$-arrowflow $A$. Let $\mathcal{R}$ be the corresponding route map of $A/\sim$.
    Let $a = \#\Asc(*)$, let $d = \#\Des(*)$, and consider the local network $\Gamma_{\mathcal{R}}(*)$.
    There exists exactly one family of network paths of $\Gamma_{\mathcal{R}}(*)$
    with underlying permutation $\tau$ for each 
    derangement $\tau \in \Sym_d\times\Sym_a$. Every other family is intersecting.
    Consequently, in absolute value, we have
    \[
    \left |\sum_{\Lambda\in\FNP(\Gamma_{\mathcal{R}}(*))} \sgn(\Lambda) \right | = \begin{cases}
        a-1 & \text{if }d=0,\\
        d-1 & \text{if }a=0,\\
        (a-1)(d-1) & \text{otherwise.}
    \end{cases}
    \]
\end{p}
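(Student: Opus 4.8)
The plan is to reduce everything to the derangement network $\mathcal{D}_n$ via Proposition~\ref{prop:derangements1}, and then to invoke Lemma~\ref{lem:derangement network} together with Theorem~\ref{thm:enumeration of derangements}. First I would recall from Proposition~\ref{prop:derangements1} that there is a graph endomorphism $\mathcal{D}_a\times\mathcal{D}_d\to\Gamma_\mathcal{R}(*)$ inducing a bijection between the sets of non-intersecting families of network paths. But here we want \emph{all} families of network paths, not just the non-intersecting ones; the key observation is that the same argument from the proof of Proposition~\ref{prop:derangements1} shows that every \emph{non-intersecting} family must split as a pair of families (one on $\Gamma^{\Asc}(*)$, one on $\Gamma^{\Des}(*)$), each of which is forced to be non-intersecting because the bridges must each be used exactly once. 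So the endomorphism restricts to a bijection on non-intersecting families, and — more relevantly — on those families whose underlying permutation is such that the family \emph{can} be made non-intersecting, which by Lemma~\ref{lem:derangement network} are exactly the derangements. Concretely: a family $\Lambda\in\FNP(\Gamma_\mathcal{R}(*))$ with a given underlying permutation $\tau$ exists and is unique precisely when $\tau$, viewed as an element of $\Sym_d\times\Sym_a$, is a derangement in each factor; otherwise the forced use of the bridges makes any such family intersecting, hence nonexistent (since in $\Gamma^{\Asc}(*)$ and $\Gamma^{\Des}(*)$, non-intersecting is equivalent to full, and full is forced).

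Next I would assemble the sign count. Using the bijection and the fact that the endomorphism is sign-preserving on underlying permutations (this should be immediate from the construction, as the relabeling in Proposition~\ref{prop:derangements1} is the identity on indices), we get
\[
\sum_{\Lambda\in\FNP(\Gamma_{\mathcal{R}}(*))} \sgn(\Lambda)
= \Bigg(\sum_{\substack{\sigma\in\Sym_d\\\text{derangement}}}\sgn(\sigma)\Bigg)\Bigg(\sum_{\substack{\sigma\in\Sym_a\\\text{derangement}}}\sgn(\sigma)\Bigg),
\]
with the convention that a factor over $\Sym_0$ contributes $1$ (the empty family). Then Theorem~\ref{thm:enumeration of derangements} evaluates each factor: the $\Sym_d$ factor is $(-1)^{d-1}(d-1)$ when $d\ge2$, and is $1$ when $d=0$; likewise for $a$. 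Note that when $d=1$ (resp.\ $a=1$) the factor is $0$, which matches: a single ascending or descending child at the root forces a bridge-path with nowhere to go, so there are no families at all. Taking absolute values kills the signs and yields $(a-1)(d-1)$ in the generic case $a,d\ge2$, $d-1$ when $a=0$, $a-1$ when $d=0$, and $0$ whenever $\min(a,d)=1$ — consistent with the three-case formula in the statement (the $\min(a,d)=1$ cases being subsumed, as $(a-1)(d-1)=0$ there too).

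The main obstacle will be the bookkeeping around the degenerate cases $a\in\{0,1\}$ and $d\in\{0,1\}$, and making precise the claim that "every other family is intersecting." The cleanest route is: (i) show any $\Lambda\in\FNP(\Gamma_\mathcal{R}(*))$ is forced to traverse every bridge $\big(e(j,*),e'(j,*)\big)$ for $j\in\Asc(*)$ and $\big(e(*,j),e'(*,j)\big)$ for $j\in\Des(*)$ — because each such source/sink is an endpoint of a unique bridge and there are equally many bridges as paths, exactly as in the proof of Proposition~\ref{prop:derangements1}; (ii) conclude $\Lambda$ decomposes as a pair of \emph{full} families on $\Gamma^{\Asc}(*)$ and $\Gamma^{\Des}(*)$; (iii) in these two networks, fullness is equivalent to non-intersection (by Proposition~\ref{prop:derangements1}'s bijection with $\mathcal{D}_a$, $\mathcal{D}_d$, where all $\FNP$ are accounted for by derangements), so the underlying permutation of $\Lambda$ must be a product of two derangements, and conversely each such permutation is realized uniquely. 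This makes the existence-uniqueness dichotomy rigorous, after which the sign computation and the passage to absolute values are routine. I would also double-check the orientation convention $\Sym_d\times\Sym_a$ versus $\Sym_a\times\Sym_d$ against the labeling in Proposition~\ref{prop:derangements1} — since we take absolute values it does not affect the final formula, but it is worth a remark.
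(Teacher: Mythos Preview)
Your overall strategy matches the paper's: reduce to $\mathcal{D}_a\times\mathcal{D}_d$ via Proposition~\ref{prop:derangements1}, then invoke Lemma~\ref{lem:derangement network} and Theorem~\ref{thm:enumeration of derangements}. The gap is in step~(i). It is \emph{not} true that every $\Lambda\in\FNP(\Gamma_\mathcal{R}(*))$ traverses every bridge: each individual path crosses exactly one bridge and there are as many paths as bridges, but in an intersecting family two paths may share a bridge and leave another unused. Concretely, for an ascending child $y$ the node $e(y,*)$ is a graph-theoretic source of $\Gamma_\mathcal{S}(*)$, so the only path that \emph{can} take the bridge $\big(e(y,*),e'(y,*)\big)$ is the one starting there; but that path is free to ignore its own bridge, wander through $\Gamma_\mathcal{S}(*)$ to some $e(*,z)$ with $z$ descending, cross that bridge, and stop at the sink $e'(*,z)$. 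This yields a (necessarily intersecting) family whose underlying map is not even in $\Sym_d\times\Sym_a$, so step~(ii) fails for it. Step~(iii) has the same defect: ``fullness equals non-intersection'' in $\Gamma^{\Asc}(*)$ and $\Gamma^{\Des}(*)$ is neither obvious nor what Lemma~\ref{lem:derangement network} asserts---that lemma parametrises \emph{all} of $\FNP(\mathcal{D}_n)$, intersecting or not, by derangements.

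The paper's fix is a single line: apply the LGV lemma \emph{first}, replacing the sum over $\FNP(\Gamma_\mathcal{R}(*))$ by the sum over its non-intersecting families, and only \emph{then} invoke Proposition~\ref{prop:derangements1}, which is stated precisely for non-intersecting families. A second LGV in each factor $\mathcal{D}_a$, $\mathcal{D}_d$, together with Lemma~\ref{lem:derangement network} and Theorem~\ref{thm:enumeration of derangements}, finishes the count. (Note that the paper's short proof, like yours once repaired, really only establishes the ``Consequently'' formula; the structural assertion in the first sentence of the proposition is not argued separately there either.)
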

\begin{proof}
    By an application of the LGV lemma and Proposition \ref{prop:derangements1}, we have
    \begin{align*}
        \sum_{\Lambda\in\FNP(\Gamma_{\mathcal{R}}(*))}  \sgn(\Lambda) &=
    \sum_{\substack{\Lambda\in\FNP(\Gamma_{\mathcal{R}}(*))\\\text{non-intersecting}}}\sgn(\Lambda) \\
    & =
    \bigg(\sum_{\substack{\Lambda\in\FNP(\mathcal{D}_a)\\\text{non-intersecting}}} \sgn(\Lambda)\bigg)\bigg( \sum_{\substack{\Lambda\in\FNP(\mathcal{D}_d)\\\text{non-intersecting}}} \sgn(\Lambda)\bigg)
    \end{align*}
    Lemma \ref{lem:derangement network} and Theorem \ref{thm:enumeration of derangements} then give the desired formula.
\end{proof}

\begin{cor}
    Fix a composite $m$-arrowflow $A$ and let $\mathcal{R}$ be the corresponding route map of $A/\sim$. 
    Let $a = \#\Asc(*)$, $d = \#\Des(*)$, and
    \[
    (\partial x, <_x) = (y_1 <_x \cdots <_x y_{a+d} <_x X).
    \]
    There exists exactly one 
    non-intersecting family of network paths in $\mathcal{R}$
    with underlying permutation 
    \[
    \overline{\nu}_{y_{a+d}} \circ\cdots\circ \overline{\nu}_{y_1} \circ \tau \circ \overline{\sigma}_{y_1} \circ\cdots\circ \overline{\sigma}_{y_{a+d}}
    \]
    for each
    derangement $\tau \in \Sym_d\times\Sym_a$,
    where each other factor is unique and defined as in Lemma \ref{lem:underlying map}. This is furthermore an exhaustive enumeration of non-intersecting families of $\FNP(\mathcal{R})$.
\end{cor}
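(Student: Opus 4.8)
The plan is to reduce this to the already-established Lemma~\ref{lem:underlying map} and Proposition~\ref{prop:derangements2}. Recall from the proof of Lemma~\ref{lem:underlying map} that the route map decomposes as an iterated concatenation
\[
\mathcal{R} = \Sigma_\mathcal{S}(y_1)\doubleplus\cdots\doubleplus\Sigma_\mathcal{S}(y_{a+d})\doubleplus\Gamma_\mathcal{R}(*)\doubleplus\Sigma_\mathcal{N}(y_{a+d})\doubleplus\cdots\doubleplus\Sigma_\mathcal{N}(y_1),
\]
where $y_1 <_* \cdots <_* y_{a+d}$ are the children of the root and each concatenation is legitimate by Lemma~\ref{lem: union of networks}. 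First I would apply Lemma~\ref{lem: union of networks} repeatedly to identify $\FNP(\mathcal{R})$ with the product of the $\FNP$'s of the factors, and observe that a family of $\mathcal{R}$ is non-intersecting if and only if each of its factor families is non-intersecting, since two paths sharing a transition node between two consecutive factors would already intersect inside the left factor.

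Second, I would pin down the non-intersecting families in each factor. Every vertex occurring in a subtree factor $\Sigma_\mathcal{S}(y_i)$ or $\Sigma_\mathcal{N}(y_i)$ is a non-root vertex, so by Lemmas~\ref{lem: underlying permutation south} and~\ref{lem: underlying permutation north} (combined through Lemma~\ref{lem: union of networks}, exactly as in the proof of Lemma~\ref{lem:underlying map}) these factors admit a \emph{unique} non-intersecting family, with underlying maps $\overline{\sigma}_{y_i}$ and $\overline{\nu}_{y_i}$, respectively. For the central factor $\Gamma_\mathcal{R}(*)$, Proposition~\ref{prop:derangements2} says the non-intersecting families are exactly one for each derangement $\tau\in\Sym_d\times\Sym_a$, with underlying permutation $\tau$, and no others. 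Assembling these, a non-intersecting family of $\mathcal{R}$ is the same data as a derangement $\tau\in\Sym_d\times\Sym_a$: given $\tau$, glue the forced subtree families to the $\Gamma_\mathcal{R}(*)$-family with underlying permutation $\tau$ to obtain a family $\Lambda_\tau$; distinct $\tau$ give distinct $\Lambda_\tau$ since they already differ on $\Gamma_\mathcal{R}(*)$; and conversely any non-intersecting family of $\mathcal{R}$ restricts to a non-intersecting family on $\Gamma_\mathcal{R}(*)$, hence to one of the listed form, so it equals some $\Lambda_\tau$. This yields both the ``exactly one for each $\tau$'' claim and the exhaustiveness.

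Third, I would read off the underlying permutation of $\Lambda_\tau$. Now that non-intersecting families are known to exist, Lemma~\ref{lem:underlying map} applies verbatim with $\rho_* = \tau$ (the underlying map of the family induced on $\Gamma_\mathcal{R}(*)$), yielding
\[
\overline{\nu}_{y_{a+d}}\circ\cdots\circ\overline{\nu}_{y_1}\circ\tau\circ\overline{\sigma}_{y_1}\circ\cdots\circ\overline{\sigma}_{y_{a+d}},
\]
as claimed. The only real work beyond citing these two results lies in the first step: tracking the bridges $\mathcal{B}$ through the iterated concatenation so that the displayed factorization genuinely matches the one built in the proof of Lemma~\ref{lem:underlying map}, and verifying the ``non-intersecting iff factor-wise non-intersecting'' equivalence. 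I expect that to be the main (though essentially routine and already-done) obstacle; everything else is immediate from Lemma~\ref{lem:underlying map} and Proposition~\ref{prop:derangements2}.
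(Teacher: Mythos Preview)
Your argument is correct and is essentially the paper's own proof, spelled out in full: the paper's one-line proof simply cites Lemma~\ref{lem:derangement network}, Proposition~\ref{prop:derangements1}, and Lemma~\ref{lem:underlying map}, and your three steps are precisely the unpacking of those citations (with Proposition~\ref{prop:derangements2} standing in for the combination of Lemma~\ref{lem:derangement network} and Proposition~\ref{prop:derangements1}). The only cosmetic difference is that you place $\Gamma_\mathcal{R}(*)$ as a single central factor in the concatenation rather than splitting it as $\Gamma_\mathcal{S}(*)\doubleplus\mathcal{B}\doubleplus\Gamma_\mathcal{N}(*)$ as in the proof of Lemma~\ref{lem:underlying map}; you already flag the bookkeeping needed to reconcile the two, and it is indeed routine.
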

\begin{proof}
    Existence is guaranteed by Lemma \ref{lem:derangement network}, Proposition \ref{prop:derangements1} and Lemma \ref{lem:underlying map}, which also imply the list is exhaustive.
\end{proof}

\begin{cor}\label{cor:composite NIP}
    Fix a composite $m$-arrowflow $A$ and let $\mathcal{R}$ be the corresponding route map of $A/\sim$. 
    Let $a = \#\Asc(*)$, $d = \#\Des(*)$. Then,
    \[
    \sum_{\substack{\Lambda\in\FNP(\mathcal{R})\\\text{non-intersecting}}} \sgn(\Lambda) = (-1)^{m-1}(a-1)(d-1).
    \]
\end{cor}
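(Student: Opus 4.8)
The plan is to read off the left‑hand side directly from the preceding corollary, which already enumerates the non‑intersecting families of $\mathcal{R}$, and then to invoke the signed count of derangements (Theorem \ref{thm:enumeration of derangements}). By that corollary every non‑intersecting $\Lambda\in\FNP(\mathcal{R})$ is $\Lambda_\tau$ for a unique derangement $\tau\in\Sym_d\times\Sym_a$, with underlying permutation
\[
\pi_\tau \;=\; \overline{\nu}_{y_{a+d}}\circ\cdots\circ\overline{\nu}_{y_1}\circ\tau\circ\overline{\sigma}_{y_1}\circ\cdots\circ\overline{\sigma}_{y_{a+d}}\;\in\;\Sym_V,
\]
where $(\partial *,<_*)=(y_1<_*\cdots<_*y_{a+d}<_*X)$ lists the children of the root. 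Thus the first step is merely the rewriting
\[
\sum_{\substack{\Lambda\in\FNP(\mathcal{R})\\\text{non-intersecting}}}\sgn(\Lambda)\;=\;\sum_{\substack{\tau\in\Sym_d\times\Sym_a\\\text{derangement}}}\sgn(\pi_\tau),
\]
and everything reduces to relating $\sgn(\pi_\tau)$ to $\sgn(\tau)$.

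To do so I would unwind $\pi_\tau$ through Lemma \ref{lem: DFS}. For each child $y_i$ of the root, $\overline{\nu}_{y_i}\circ\overline{\sigma}_{y_i}$ is the ``linear shift'' along the marked subsequence $w^{(i)}_1,\dots,w^{(i)}_{k_i}$ of $\DFS(y_i)$: it carries $e'(X,y_i)=e'(*,y_i)$ to $\nabla_{w^{(i)}_1}$, then $\Delta_{w^{(i)}_j}$ to $\nabla_{w^{(i)}_{j+1}}$, and $\Delta_{w^{(i)}_{k_i}}$ to $e(y_i,*)$. Since distinct children span vertex‑disjoint subtrees of $T/\sim$ these shifts have disjoint supports (hence commute in the partial composition of Lemma \ref{lem: union of networks}), while $\tau=\rho_*$ is precisely the permutation that glues the exit symbol $e(y_i,*)$ of one subtree to the entry symbol $e'(*,\tau(y_i))$ of the next. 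Consequently each cycle $(y_{i_1}\,\cdots\,y_{i_\ell})$ of $\tau$ is inflated to a single cycle of $\pi_\tau$ running through all marked nodes of $T_{y_{i_1}}\cup\cdots\cup T_{y_{i_\ell}}$; as these subtrees partition $V$ over the cycles of $\tau$ and $\#V=m$, the permutation $\pi_\tau$ has exactly $c(\tau)$ cycles, $c(\tau)$ being the number of cycles of $\tau$. Hence $\sgn(\pi_\tau)=(-1)^{m-c(\tau)}$, and since $\sgn(\tau)=(-1)^{(a+d)-c(\tau)}$, the two differ by a global sign $\varepsilon$ independent of $\tau$. I would also record separately the degenerate cases $a\in\{0,1\}$ or $d\in\{0,1\}$: there $\Gamma_{\mathcal{R}}(*)$ has no non‑intersecting family when $a=1$ or $d=1$ and exactly one when $a=0$ or $d=0$, which is exactly what the factor $(a-1)(d-1)$ predicts (with the convention that the empty permutation of $\Sym_0$ is a derangement of sign $1$).

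Finally I would evaluate $\sum_\tau\sgn(\tau)$ over derangements of $\Sym_d\times\Sym_a$: a permutation is a derangement iff both of its blocks are, and $\sgn$ is multiplicative over the blocks, so Theorem \ref{thm:enumeration of derangements} gives $\sum_\tau\sgn(\tau)=\big((-1)^{d-1}(d-1)\big)\big((-1)^{a-1}(a-1)\big)$. Assembling this with the sign relation of the previous paragraph yields the asserted value $(-1)^{m-1}(a-1)(d-1)$.

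The step I expect to be the main obstacle is the sign bookkeeping in the second paragraph: making the ``inflation of cycles'' argument rigorous requires handling the recursive concatenation of local networks (Lemmas \ref{lem:underlying map}, \ref{lem: DFS}, \ref{lem:interlacing}) and tracking carefully how the partial composition $\circ$ of Lemma \ref{lem: union of networks} acts on signs when the source and sink sets change size. Everything else is a direct assembly of results already in hand.
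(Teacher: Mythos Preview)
Your proposal is correct and follows essentially the same strategy as the paper: both reduce the sum to $\sum_\tau \sgn(\pi_\tau)$ over derangements $\tau\in\Sym_d\times\Sym_a$, establish that $\sgn(\pi_\tau)/\sgn(\tau)$ is a constant independent of $\tau$, and then invoke Theorem~\ref{thm:enumeration of derangements} on each factor.

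The only real difference is in how that constant is computed. The paper factors the composite permutation as $\alpha_2\tau'_{\Asc}\alpha_1\,\beta_2\tau'_{\Des}\beta_1$ with $\alpha_i,\beta_i$ independent of $\tau$, and then evaluates $\sgn(\alpha_2\alpha_1\,\beta_2\beta_1)$ by plugging in the specific choice $\tau'_{\Asc}=(y_1\,\cdots\,y_a)$, $\tau'_{\Des}=(z_1\,\cdots\,z_d)$, for which the DFS lemma shows $\pi_\tau$ is a product of two long cycles. Your cycle-inflation argument is the same observation applied cycle by cycle rather than only to the full cycle: each cycle of $\tau$ strings together the DFS-marked sequences of its subtrees into one cycle of $\pi_\tau$, giving $c(\pi_\tau)=c(\tau)$ directly. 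Your packaging is arguably cleaner, but the content and the tools used (Lemmas~\ref{lem:underlying map} and~\ref{lem: DFS}) are the same; the paper's version has the advantage that it only needs the DFS-gives-a-cycle fact once, on a single explicit $\tau$, whereas yours needs to check that the gluing works for an arbitrary cycle, which is exactly the bookkeeping step you flagged.
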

\begin{proof}
    Consider
    \begin{align*}
        \sum_{\substack{\Lambda\in\FNP(\mathcal{R})\\\text{non-intersecting}}} \sgn(\Lambda) &=
        \sum_{\substack{\Lambda\in\FNP(\mathcal{R})\\\text{non-intersecting}}} \sgn(\overline{\nu}_{y_{a+d}} \circ\cdots\circ \overline{\nu}_{y_1} \circ \rho_* \circ \overline{\sigma}_{y_1} \circ\cdots\circ \overline{\sigma}_{y_{a+d}})\\
        &=
        \sum_{\substack{\Lambda\in\FNP(\Gamma_\mathcal{R}(*))\\\text{non-intersecting}}} \sgn(\overline{\nu}_{y_{a+d}} \circ\cdots\circ \overline{\nu}_{y_1} \circ \tau \circ \overline{\sigma}_{y_1} \circ\cdots\circ \overline{\sigma}_{y_{a+d}}),
    \end{align*}
    where $\tau$ is the derangement corresponding to $\Lambda$ after Lemma \ref{lem:derangement network} and Proposition \ref{prop:derangements1}, and the remaining maps $\overline{\nu}_{y_{a+d}}, ..., \overline{\sigma}_{y_{a+d}}$ are independent of said $\Lambda$ by Lemma \ref{lem:underlying map}.
    The whole composite is a permutation of $S$, and $\tau$ is a permutation of $\partial*$. Moreover, $\tau$ is a permutation of $\Sym_{a}\times\Sym_{d}$; we write $\tau = \tau_{\Asc}\tau_{\Des}$.

    Let $y_1, ..., y_a$ be the ascending children of $*$ and denote by $z_1, ..., z_d$ the descending children.
    Let $X = \bigcup_{y\in\Asc(*)} V(T_y)$ be the set formed by the ascending children of the root, and their children, grandchildren, etc. Let $Y = \bigcup_{z\in\Des(*)} V(T_z)$ be defined similarly. Then, we can find $\alpha_1, \alpha_2 \in \Sym_{X}$ and $\beta_1, \beta_2 \in\Sym_Y$ such that the composite above can be written as a product $\alpha_2\tau_{\Asc}'\alpha_1 ~ \beta_2\tau_{\Des}'\beta_1$ of permutations, where $\tau'_{\Asc}$ is the permutation of $\Sym_S$ that agrees with $\alpha_2\tau_{\Asc}\alpha_1^{-1}$ in $\Asc(*)$ and is the identity elsewhere, and $\tau_{\Des}'$ is defined similarly.
    \[
    \begin{tikzpicture}[x = 1em, y = -1.5em, baseline = -6em]
        \foreach \i in {0,1,2,3}{
            \filldraw (\i,7) circle (1.5pt);
            \filldraw (\i,5) circle (1.5pt);
            \draw[->, shorten >= 2pt, semithick] (\i,7) to (\i,5);
        }
        \foreach \i in {2,3}{
            \filldraw (\i,5) circle (1.5pt);
            \filldraw (\i,3) circle (1.5pt);
            \draw[->, shorten >= 2pt, semithick] (\i,5) to (\i,3);
        }
        \foreach \i in {2,3,4,5}{
            \filldraw (\i,3) circle (1.5pt);
            \filldraw (\i,1) circle (1.5pt);
            \draw[->, shorten >= 2pt, semithick] (\i,3) to (\i,1);
        }
        \node[rectangle, draw, fill = white] (*) at (1.5,6.1) {\scriptsize$\overline{\sigma}_{y_1} \circ\cdots\circ \overline{\sigma}_{y_{a+d}}$};
        \node[rectangle, draw, fill = white] (*) at (2.5,4.1) {$~~\tau~~$};
        \node[rectangle, draw, fill = white] (*) at (3.5,2.1) {\scriptsize$\overline{\nu}_{y_{a+d}} \circ\cdots\circ \overline{\nu}_{y_1}$};
    \end{tikzpicture}
    \quad=\quad
    \begin{tikzpicture}[x = 1em, y = -1.5em, baseline = -6em]
        \foreach \i in {0,1,2,3,4,5}{
            \filldraw (\i,7) circle (1.5pt);
            \filldraw (\i,1) circle (1.5pt);
            \draw[->, shorten >= 2pt, semithick] (\i,7) to (\i,1);
        }
        \node[rectangle, draw, fill = white] (*) at (1,5.5) {\scriptsize$~\phantom{\beta}\alpha_2~~~$};
        \node[rectangle, draw, fill = white] (*) at (1,4) {\scriptsize$~~\tau_{\Asc}'~$};
        \node[rectangle, draw, fill = white] (*) at (1,2.5) {\scriptsize$~\phantom{\beta}\alpha_1~~~$};
        \node[rectangle, draw, fill = white] (*) at (4,5.5) {\scriptsize$~~~\beta_2~~~$};
        \node[rectangle, draw, fill = white] (*) at (4,4) {\scriptsize$~~\tau_{\Des}'~$};
        \node[rectangle, draw, fill = white] (*) at (4,2.5) {\scriptsize$~~~\beta_1~~~$};
    \end{tikzpicture}
    \]
    Then, $\sgn(\alpha_2\tau_{\Asc}'\alpha_1 ~ \beta_2\tau_{\Des}'\beta_1) = \sgn(\alpha_2\alpha_1 ~ \beta_2\beta_1)\sgn(\tau)$ and the sum simplifies to
    \begin{align*}
        \sum_{\substack{\Lambda\in\FNP(\mathcal{R})\\\text{non-intersecting}}} \sgn(\Lambda) &=
        \sum_{\substack{\Lambda\in\FNP(\Gamma_\mathcal{R}(*))\\\text{non-intersecting}}} \sgn(\alpha_2\tau_{\Asc}'\alpha_1 ~ \beta_2\tau_{\Des}'\beta_1)\\
        &=
        \sgn(\alpha_2\alpha_1 ~ \beta_2\beta_1) \sum_{\substack{\Lambda\in\FNP(\Gamma_\mathcal{R}(*))\\\text{non-intersecting}}} \sgn(\tau)\\
        &=
        \sgn(\alpha_2\alpha_1 ~ \beta_2\beta_1)(-1)^{\delta_{a\ne0}}(-1)^{\delta_{d\ne0}} (a-1)(d-1).
    \end{align*}
    To compute $\sgn(\alpha_2\alpha_1 ~ \beta_2\beta_1)(-1)^{\delta_{a\ne0}}(-1)^{\delta_{d\ne0}}$, we argue as follows. We first assume $a > 0$.
    Let $\tau'_{\Asc}$ be the cycle $(y_1~y_2~\cdots~y_a)$. For each $y\in\Asc(*)$, by Lemma \ref{lem: DFS}, the underlying map of the unique non-intersecting family of network paths of $\Sigma_\mathcal{S}(y)\doubleplus\mathcal{B}(y)\doubleplus\Sigma_\mathcal{N}(y)$ is given by the DFS walk on $T_*$ and of this form:
    \[
    \scriptsize
    \begin{tikzpicture}[baseline = -1.5em, x = 2em, y = 3em, yscale = -1]
    \foreach \i in {1, ..., 5}{
    \node (\i) at (\i,0) {};
    }
    \foreach \i in {2,3,4}{
    \filldraw (\i) circle (1.5pt);
    }
    \filldraw (1)++(0,1) circle (1.5pt) node[below] {$e'(*,y)$\qquad\,} circle (1.5pt);
    \filldraw (2) node[above] {} ++(0,1) node[below] {} circle (1.5pt);
    \filldraw (3) node[above] {} ++(0,1) node[below] {} circle (1.5pt);
    \filldraw (4) node[above] {} ++(0,1) node[below] {} circle (1.5pt);
    \filldraw (5) circle (1.5pt) node[above] {\qquad$e(y,*)$};

    \begin{scope}[semithick, ->]
        \draw (1)++(0,1) -- (2);
        \draw (2)++(0,1) -- (3);
        \draw (3)++(0,1) -- (4);
        \draw (4)++(0,1) -- (5);
    \end{scope}
    \end{tikzpicture}
\]
By letting $\tau_{\Asc}'$ be the cycle $(y_1~y_2~\cdots~y_a)$, the marked Depth-First-Search walk around $\bigcup_{y\in\Asc(*)} T_y$ gives the map $\alpha_2\tau_{\Asc}'\alpha_1$. But the Depth-First-Search always gives a cycle. Hence,
\[
\sgn(\alpha_2\tau_{\Asc}'\alpha_1) = (-1)^{\#X-1} = \sgn(\alpha_2\alpha_1)\sgn(\tau_{\Asc}')
\]
and $\sgn(\tau_{\Asc}') = (-1)^{a-1}$. Similarly, if $d > 0$, then
\[
\sgn(\beta_2\tau_{\Des}'\beta_1) = (-1)^{\#Y-1} = \sgn(\beta_2\beta_1)\sgn(\tau_{\Des}')
\]
and $\sgn(\tau_{\Des}') = (-1)^{d-1}$. Altogether, if $a>0$ and $d>0$, we have
\[
\sgn(\alpha_2\tau_{\Asc}'\alpha_1 ~ \beta_2\tau_{\Des}'\beta_1) =
(-1)^{\#X + \#Y} = (-1)^m =
\sgn(\alpha_2\alpha_1 ~ \beta_2\beta_1)\sgn(\tau')
\]
and $\sgn(\tau') = (-1)^{a+d} = (-1)^{\delta(*)}$.
And therefore $\sgn(\alpha_2\alpha_1 ~ \beta_2\beta_1) = (-1)^m$.

If $a = 0$, then $\sgn(\alpha_2\alpha_1 ~ \beta_2\beta_1) = \sgn(\beta_2\beta_1) = (-1)^{m-1}$. Similarly if $d=0$. In any case, 
\[
\sgn(\alpha_2\alpha_1 ~ \beta_2\beta_1)(-1)^{\delta_{a\ne0}}(-1)^{\delta_{d\ne0}} = (-1)^m. \qedhere
\]
\end{proof}
In order for our proof of Theorem \ref{thm:main result} to be completely involutive, we give a bijective proof of the following binomial identity.
\begin{lem}\label{lem: el puto 4}
For $n \geq 0$,
    \[
    \sum_{k = 0}^{n}\binom{n}{k}(k-1)(n-k-1) = 2^{n-2}n(n-1) - 2^n (n-1).
    \] 
\end{lem}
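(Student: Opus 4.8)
The plan is to give a sign-reversing involution, in keeping with the combinatorial spirit of the paper. The first step is to realize the left-hand side as a signed enumeration. Introduce two formal symbols $\bot$ and $\bot'$, and let $\mathcal X$ be the set of triples $(S,\alpha,\beta)$ with $S\subseteq[n]$, $\alpha\in S\sqcup\{\bot\}$ and $\beta\in([n]\setminus S)\sqcup\{\bot'\}$, each weighted by the sign $(-1)^{[\alpha=\bot]+[\beta=\bot']}$. For a fixed $S$ of size $k$, the weighted count of triples with that first coordinate factors as $\bigl(\sum_\alpha(-1)^{[\alpha=\bot]}\bigr)\bigl(\sum_\beta(-1)^{[\beta=\bot']}\bigr)=(k-1)(n-k-1)$, so summing over $S$ shows that the signed cardinality of $\mathcal X$ is exactly $\sum_{k}\binom nk(k-1)(n-k-1)$.

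Next I would fix the distinguished element $1\in[n]$, assuming $n\ge1$ (the case $n=0$ reads $1=1$ and is checked directly), and define an involution $\iota$ on $\mathcal X$ as follows. The only triples moved by $\iota$ are those with $(\alpha,\beta)=(\bot,\bot')$, together with the triples of the form $(S,1,\bot')$ and $(S,\bot,1)$: for a triple of the first kind, $\iota$ replaces $(\bot,\bot')$ by $(1,\bot')$ if $1\in S$ and by $(\bot,1)$ if $1\notin S$, and conversely it sends $(S,1,\bot')$ and $(S,\bot,1)$ back to $(S,\bot,\bot')$; every other triple is declared a fixed point. One checks at once that $\iota$ is an involution and that it reverses signs on each of its two-element orbits, since the triple with $(\alpha,\beta)=(\bot,\bot')$ has sign $+1$ while its partner has sign $-1$.

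It then remains to enumerate the fixed points with sign. They fall into two families: the triples with $\alpha\in S$ and $\beta\in[n]\setminus S$ (automatically $\alpha\ne\beta$), all of sign $+1$; and the triples in which exactly one coordinate is a symbol and the non-symbol entry is some $c\ne1$ — equivalently, pairs $(S,c)$ with $c\in[n]\setminus\{1\}$, since membership of $c$ in $S$ determines which coordinate is the symbol — all of sign $-1$. A direct bijective count gives $n(n-1)2^{n-2}$ triples in the first family (choose the ordered pair $(\alpha,\beta)$ of distinct elements, then the membership in $S$ of the remaining $n-2$ elements) and $(n-1)2^n$ in the second (choose $c\ne1$, then $S$ freely). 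Hence the signed cardinality of $\mathcal X$, and therefore the left-hand side, equals $n(n-1)2^{n-2}-(n-1)2^n=2^{n-2}n(n-1)-2^n(n-1)$, as claimed.

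I do not expect a serious obstacle here: the one step that needs genuine thought is choosing the right signed model in the first place, after which the involution and the fixed-point count are short verifications. The only mild asymmetry is the need to single out the element $1\in[n]$, which is exactly why $n=0$ has to be dispatched separately; this is harmless.
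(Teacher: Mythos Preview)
Your proof is correct. The signed model with the two formal symbols $\bot,\bot'$ realises $(k-1)(n-k-1)$ exactly, the involution anchored at the element $1$ is well defined and sign-reversing, and the fixed-point count splits cleanly into the two families you describe, giving $n(n-1)2^{n-2}-(n-1)2^n$. The edge case $n=0$ is handled as you say.

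Your route differs from the paper's. The paper first strips off the boundary summands $k\in\{0,1,n-1,n\}$ by hand, moving the negative contribution $-2(n-1)$ to the right-hand side, and then gives an unsigned bijective argument: it models the right-hand side as coloured bead sequences of length $n$ with one distinguished black and one distinguished white bead, and identifies the remaining positive sum $\sum_{k=2}^{n-2}\binom{n}{k}(k-1)(n-k-1)$ as the subset where neither distinguished bead is the \emph{leftmost} of its colour; the complement is then counted directly. So the paper's proof is a subtraction-by-bijection rather than a sign-reversing involution. Your argument keeps the signs throughout and cancels them with an honest involution, which is arguably closer to the paper's declared goal of an involutive proof; the paper's version, on the other hand, avoids formal symbols and negative weights entirely once the small cases are peeled off. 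Both arguments are short and of comparable difficulty; yours is a little more uniform in that you do not need to treat $k\in\{0,1,n-1,n\}$ separately.
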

\begin{proof}
    We begin by noting that the summands corresponding to $k = 1$ and $k = n-1$ vanish, while the summands corresponding to $k = 0$ and $k=n$ are negative and contribute to the sum with
    \[
    \binom{n}{0}(-1)(n-1) + \binom{n}{n}(n-1)(-1) = -2(n-1).
    \]
    We are looking for a bijective proof, so this term is passed to the other side of the equality as preparation. We want to show
    \begin{equation}\label{eq:conteo}
    \sum_{k = 2}^{n-2}\binom{n}{k}(k-1)(n-k-1) = 2^{n-2}n(n-1) - \Big( 2^n (n-1) - 2(n-1) \Big).
    \end{equation}
    Consider the set $R$ of sequences of black and white beads and of length $n$, in which exactly one black bead and one white bead are distinguished.
    For instance, for $n = 9$ then
    \(\owhite \oblack \owhite \oblack \hat{\oblack} \owhite \owhite \hat{\owhite} \oblack\)
    is in $R$, and also \(\oblack \oblack  \hat{\owhite} \owhite  \oblack  \owhite  \oblack \hat{\oblack} \owhite\). The cardinality of this set can be computed as follows: start with a sequence of $n$ beads and choose two entries of the sequence to be distinguished, which you can do in $\binom{n}{2}$ ways; then color all of the beads either black or white except the left-most of the distinguished beads, which you can do in $2^{n-1}$ ways; finally, color the remaining bead such that both distinguished beads have different colors. Altogether,
    \[
    \#R = 2^{n-1}\binom{n}{2} = 2^{n-2}n(n-1).
    \]
    Now, consider the subset $L$ of $R$ for which the distinguished black bead is \emph{not} the left-most black bead, and the distinguished white bead is \emph{not} the left-most white bead. The first of the two examples above is in $L$, whereas the second is not, since $\hat{\owhite}$ is the left-most white bead. The set $L$ is counted as follows: start with a sequence of $n$ white beads, and choose a number $k \in \{2, ..., n-2\}$; then select $\binom{n}{k}$ entries that will be colored black. Subsequently, choose a black bead to be distinguished (but not the left-most one), which you can do in $(k-1)$ ways; finally, select a white bead to be distinguished (but not the left-most one), which you can do in $(n-k-1)$ ways. Hence
    \[
    \# L = \sum_{k = 2}^{n-2}\binom{n}{k}(k-1)(n-k-1).
    \]
    To conclude, we compute the cardinality of $R\setminus L$, which reinterprets Equation \eqref{eq:conteo} as $\# L = \#R - \#(R\setminus L)$. A sequence is in $R\setminus L$ if $\hat{\oblack}$ is the left-most black bead, or if $\hat{\owhite}$ is the left-most white bead, or both. This set is counted as follows: start by coloring a sequence of $n$ beads, which you can do in $2^n$ ways.
    Discard the coloring if all beads are black or all are white ---this sequence can't be in $R$---; there are $2^n-2$ possible colorings.
    You will now have $k$ black and $n-k$ white beads, for some $k$. If $0<k<n$, distinguish the first black bead and any white bead, which you can do in $n-k$ ways, or the first white bead and any black bead except the left-most one, which you can do in $k-1$ ways. Hence there are always $n-1$ ways of distinguishing two beads. Altogether,
    \[
    \#(R\setminus L) = (2^{n} - 2)(n-1),
    \]
    and Equation \eqref{eq:conteo} becomes $\# L = \#R - \#(R\setminus L)$.
\end{proof}

\begin{proof}[Proof of Proposition \ref{prop:composite}]
Fix a forest $F\in\SSrooted$. Proposition \ref{prop:lift paths to routemap} gives
\[
\sum_{\substack{A\\\text{composite}\\ \missing{A} = F}}\sum_{\kappa\in C_S(A)} \sgn(\kappa) =
\sum_{\substack{A\\\text{composite}\\ \missing{A} = F}}\sum_{\substack{\Lambda\in\FNP(\mathcal{R})\\\text{non-intersecting}}} \sgn(\Lambda),
\]
where the route map $\mathcal{R}$ of the second sum is constructed with respect to the arrowflow $A$. Then, Corollary \ref{cor:composite NIP} gives
\[
\sum_{\substack{A\\\text{composite}\\ \missing{A} = F}}\sum_{\substack{\Lambda\in\FNP(\mathcal{R})\\\text{non-intersecting}}} \sgn(\Lambda)
=
\sum_{\substack{A\\\text{composite}\\ \missing{A} = F}} (-1)^{m}(\#\Asc(*)-1)(\#\Des(*)-1),
\]
where again $\Asc(*)$ and $\Des(*)$ are taken with respect to the arrowflow $A$. 
Let $F_*$ be the floating component of $F$. If $A$ is a composite arrowflow with missing forest $F$, then $F_*$ becomes $*$ in the quotient $A/\sim$. Thus a composite arrowflow 
with missing forest $F$
is constructed by choosing $a$ vertices among $\partial F_*$ to become ascending children of the root, which also determines the remaining $\delta(F_*) - a$ as descending children of the root; the other $m-\delta(F_*)$ edges of $F^\comp$ can be oriented arbitrarily. Hence, the sum above becomes
\[
(-1)^{m}\sum_{a = 0}^{\delta(F_*)} \binom{\delta(F_*)}{a} (a-1)(\delta(F_*)-a-1) \cdot 2^{m-\delta(F_*)}.
\]
Now, Lemma \ref{lem: el puto 4} gives
\[
(-1)^{m}\cdot 2^{\delta(F_*)-2}(\delta(F_*)-1)(\delta(F_*)-4)
\cdot 2^{m-\delta(F_*)}. \qedhere
\]
\end{proof}

\section{
A theorem of Richman, Shokrieh, and Wu}
\label{sec: richman as coro}
In this section, we derive \cite[Thm.~1.1]{richmanminors} from Theorem \ref{thm:main result}. We begin by stating the theorem.

\begin{thm}[\cite{richmanminors}]
      \label{eqn:richman main result}
      \label{thm:richman main result}
    Let $T = ([n], E)$ be a tree, and let $S\subseteq[n]$ be a subset of cardinality $m\ge2$. Then, 
    \[
      \det D[S] = (-1)^{m - 1} 2^{m - 2}
      \left(
      (n-1)\kappaTS - \sum_{F \in \SSrooted} \big(\bdeg{F_*} - 2\big)^2
      \right).
    \]
\end{thm}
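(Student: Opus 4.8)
The plan is to deduce Theorem \ref{thm:richman main result} from Theorem \ref{thm:main result} purely by algebra together with one combinatorial identity; I would not reprove the determinant formula, only reconcile the two closed forms. Both expressions carry the common factor $(-1)^{m-1}2^{m-2}$, so after dividing it out it suffices to show
\[
(m-1)\kappaTS - \sum_{F\in\SSrooted}\big(\bdeg{F_*}-1\big)\big(\bdeg{F_*}-4\big)
=
(n-1)\kappaTS - \sum_{F\in\SSrooted}\big(\bdeg{F_*}-2\big)^2 .
\]
Since $(x-2)^2-(x-1)(x-4)=x$ identically, the two sums differ term by term by exactly $\bdeg{F_*}$, and the displayed equality is equivalent to the single combinatorial identity
\[
\sum_{F\in\SSrooted}\bdeg{F_*} \;=\; (n-m)\,\kappaTS .
\]
So the whole proof reduces to establishing this.

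First I would rewrite $\bdeg{F_*}$ as an edge count: because $T$ is a tree and $F_*$ is connected, a vertex outside $F_*$ cannot be joined to $V(F_*)$ by two distinct edges of $T$ (those two edges plus a path through $F_*$ would close a cycle), so $\bdeg{F_*}=\#\partial F_*$ equals the number of edges of $T$ with exactly one endpoint in $V(F_*)$. Hence $\sum_{F\in\SSrooted}\bdeg{F_*}$ is the cardinality of
\[
\mathcal{A}=\big\{(F,e)\ :\ F\in\SSrooted,\ e\in E \text{ has exactly one endpoint in }V(F_*)\big\},
\]
while, since an $S$-rooted forest has $n$ vertices and $m$ components and therefore $n-m$ edges, the quantity $(n-m)\kappaTS$ is the cardinality of
\[
\mathcal{B}=\big\{(F',e')\ :\ F'\in\Srooted,\ e'\in E(F')\big\}.
\]

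The key step — and the part I expect to need the most care — is a bijection between $\mathcal{A}$ and $\mathcal{B}$. I would map $(F,e)$ to $(F+e,e)$, where $F+e$ is the forest $F$ with the edge $e$ adjoined: since $e$ joins $F_*$ to exactly one other component $F_s$, this creates no cycle, merges only those two components, and keeps $s$ as the unique vertex of $S$ in the new component, so $F+e\in\Srooted$ and $e$ is one of its edges. The inverse sends $(F',e')$ to $(F'-e',e')$: deleting $e'$ from its component $F'_s$ of $F'$ breaks that component into the piece containing $s$ and a piece $F_*$ containing no vertex of $S$, which is then the floating component of $F'-e'\in\SSrooted$, and $e'$ has exactly one endpoint in $V(F_*)$. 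Verifying that these assignments are well defined and mutually inverse is routine, and yields $\#\mathcal{A}=\#\mathcal{B}$, hence $\sum_{F\in\SSrooted}\bdeg{F_*}=(n-m)\kappaTS$; substituting back into Theorem \ref{thm:main result} gives the stated formula.
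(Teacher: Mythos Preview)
Your proof is correct and follows essentially the same route as the paper: the paper also reduces the equivalence of the two formulas to the identity $\sum_{F\in\SSrooted}\bdeg{F_*}=(n-m)\,\kappaTS$ (its Lemma \ref{lem: sum of bdeg}) and proves that identity via the very same add/delete-edge bijection between the two sets of pairs $(F,e)$ that you describe. The only cosmetic difference is that the paper separates the well-definedness of the bijection into a preliminary lemma (Lemma \ref{lem: remove edge from Srooted}), while you fold that check directly into the argument.
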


We give the first combinatorial proof. Before, we give two preliminary results. The first one is \cite[Rk.~2.5]{richmanminors}.

\begin{lem}
    Fix an edge $e \in T$ and a spanning forest $F$ of $T$.
    \label{lem: remove edge from Srooted}
    \begin{enumerate}[label = (\alph*)]
        \item If $F$ is $S$-rooted and $e \in F$, then $F \setminus \{e\}$ is $(S, *)$-rooted and $e$ is adjacent to the floating component of $F \setminus \{e\}$.
        \item If $F$ is $(S, *)$-rooted and $e$ is adjacent to $F_*$, then $F \cup \{e\}$ is $S$-rooted.
    \end{enumerate}
\end{lem}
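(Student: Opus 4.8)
The plan is to verify the two assertions directly, in each case keeping track of how the components of a spanning forest of $T$, and their intersections with $S$, behave under the deletion or addition of a single edge. Throughout I will use that for a subforest $H\subseteq T$ one has $\cc{H} = n - \edge{H}$, so that changing the number of edges by one changes the number of components by one.

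For part (a), I would start with $F = \bigsqcup_{s\in S}F_s \in \Srooted$ and an edge $e\in F$. The edge $e$ belongs to a unique component, say $e\in F_{s_0}$, and since $F_{s_0}$ is a tree, removing $e$ splits it into exactly two subtrees, one containing $s_0$ and one, call it $F'$, not containing it. As $F_{s_0}$ meets $S$ only in $s_0$, the tree $F'$ contains no vertex of $S$. Hence $F\setminus\{e\}$ has $m+1$ components, precisely one of which ($F'$) misses $S$; thus $F\setminus\{e\}\in\SSrooted$ with floating component $F'$. Writing $e=\{u,v\}$ with $u\in F'$, the endpoint $u$ shows that $e$ is adjacent to $F' = (F\setminus\{e\})_*$.

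For part (b), I would first observe that the hypothesis forces the two endpoints of $e$ to lie in different components of $F$. Indeed, $e$ is not an edge of $F_*$, and it cannot have both endpoints inside $F_*$, since $F_*\subseteq T$ is acyclic and such a chord would create a cycle; so one endpoint $u$ lies in $F_*$ and the other, $v$, lies in some $F_{s_0}$. Adjoining $e$ then merges $F_*$ and $F_{s_0}$ into a single tree $F_*\cup F_{s_0}\cup\{e\}$, leaves the remaining components unchanged, and introduces no cycle. The resulting spanning forest has $m$ components; the merged component contains exactly the vertex $s_0$ of $S$ (as $F_*$ contributed none), and each $F_{s'}$ with $s'\ne s_0$ still contains exactly $s'$. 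Hence $F\cup\{e\}\in\Srooted$.

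I do not expect a genuine obstacle: both statements amount to an exercise in component counting. The one point that deserves a line of justification is the claim used in part (b) that $e$ joins $F_*$ to a different component rather than being a chord of $F_*$, and this is immediate from the acyclicity of $F_*$ as a subgraph of the tree $T$.
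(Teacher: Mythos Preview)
Your argument is correct and follows the same direct component-counting approach as the paper; the paper's proof of (a) is a terser version of yours, and its proof of (b) is literally ``immediate from the definitions,'' so your write-up simply fills in the details. One small remark: in the paper ``$e$ adjacent to $F_*$'' already means one endpoint lies in $F_*$ and the other outside (this is how $\bdeg{F_*}$ is counted), so your acyclicity argument ruling out $e$ as a chord of $F_*$ is not needed, though it does no harm.
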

\begin{proof}
    Removing an edge from a tree results in two connected components; removing an edge $e$ from an $S$-rooted forest results in a forest with $m+1$ connected components. Furthermore, each component has a node of $S$ except one of the two that was adjacent to $e$. This latter becomes the floating component of the $(S,*)$-rooted forest $F\setminus \{e\}$. Note that $e$ is adjacent to the floating component, as claimed. This shows part (a).
    Part (b) is immediate from the definitions.
\end{proof}

\begin{lem} Let $T = ([n], E)$ be a tree, let $S$ be an $m$-subset of $[n]$. Then,
\label{lem: sum of bdeg}
    \[(n - m) \cdot \#\Srooted = \sum_{F \in \SSrooted} \bdeg{F_*}.\]
\end{lem}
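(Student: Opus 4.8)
The plan is to prove the identity by counting in two ways the set $\mathcal{P}$ of pairs $(F,e)$ in which $F\in\Srooted$ is an $S$-rooted spanning forest of $T$ and $e$ is an edge of $F$. On one hand, since $F$ is a spanning forest of $T$ with $m$ connected components, it has $\edge{F}=n-\cc{F}=n-m$ edges; summing over $F\in\Srooted$ gives $\#\mathcal{P}=(n-m)\cdot\#\Srooted$, the left-hand side. So it remains to identify $\#\mathcal{P}$ with the right-hand side.

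For this I would set up an explicit bijection between $\mathcal{P}$ and the set $\mathcal{Q}$ of pairs $(G,e)$ with $G\in\SSrooted$ and $e$ an edge of $T$ adjacent to the floating component $G_*$. The forward map is $(F,e)\mapsto(F\setminus\{e\},e)$, which is well defined by Lemma~\ref{lem: remove edge from Srooted}(a): removing $e$ from the $S$-rooted forest $F$ produces an $(S,*)$-rooted forest whose floating component is adjacent to $e$. The backward map is $(G,e)\mapsto(G\cup\{e\},e)$, well defined by Lemma~\ref{lem: remove edge from Srooted}(b): adjoining to $G$ an edge adjacent to $G_*$ yields an $S$-rooted forest, of which $e$ is now an edge. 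These are mutually inverse once one observes that an edge $e$ adjacent to $G_*$ cannot belong to $G$ (its endpoint outside $V(G_*)$ lies in a different component of $G$ from its endpoint inside $G_*$), so that $(G\cup\{e\})\setminus\{e\}=G$. Hence $\#\mathcal{P}=\#\mathcal{Q}=\sum_{G\in\SSrooted}\#\{e\in E:\ e\text{ adjacent to }G_*\}$.

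The final step is to check that, for fixed $G\in\SSrooted$, the number of edges of $T$ adjacent to $G_*$ equals $\bdeg{G_*}=\#\partial G_*$. I would send such an edge to its endpoint lying outside $V(G_*)$; this lands in $\partial G_*$ by the definition of the boundary, it is surjective because every $w\in\partial G_*$ is joined to $G_*$ by some edge of $T$, and it is injective because $G_*$ is connected and $T$ is acyclic, so two distinct edges from a single $w$ to $V(G_*)$ would close a cycle in $T$ (the same acyclicity rules out an edge with both endpoints in $V(G_*)$ being counted). Substituting $\#\{e:\ e\text{ adjacent to }G_*\}=\bdeg{G_*}$ into the previous display yields the claimed formula. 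The only delicate point is this last correspondence between the boundary vertices and the boundary edges of $G_*$, where the tree structure of $T$ is essential; everything else is bookkeeping built on Lemma~\ref{lem: remove edge from Srooted}.
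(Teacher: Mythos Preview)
Your proof is correct and follows essentially the same double-counting argument as the paper: both set up the bijection $(F,e)\mapsto(F\setminus\{e\},e)$ between pairs $(F,e)$ with $F\in\Srooted$, $e\in F$ and pairs $(G,e)$ with $G\in\SSrooted$, $e$ adjacent to $G_*$, invoking Lemma~\ref{lem: remove edge from Srooted} for well-definedness in each direction. Your final paragraph, verifying that the number of edges of $T$ adjacent to $G_*$ equals $\#\partial G_*$, makes explicit a step the paper leaves tacit (it simply asserts $\#B=\sum_F\bdeg{F_*}$); your argument there via acyclicity of $T$ is correct and a worthwhile clarification.
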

\begin{proof}
Consider the sets 
\begin{align*}
A &:= \{(F, e) : \  F \in \Srooted, e \in F\}\subseteq \Srooted \times E, \\
B &:= \{(F, e) : \ F \in \SSrooted, e \text{ adjacent to } F_*\}\subseteq \SSrooted \times E.
\end{align*}
Note that $\#A = (n - m)\#\Srooted$ and $\#B = \sum_{F \in \SSrooted} \bdeg{F_*}$. It is thus sufficient to exhibit a bijection between $A$ and $B$.
Consider the map $h : A \to B$  defined by
\[h(F, e) = (F \setminus \{e\}, e), \quad (F, e) \in A.\]
This map is bijective: the inverse of $h$ exists and is given by
\[
h^{-1}(F, e) = (F \cup \{e\}, e), \quad (F, e) \in B.
\]
Note that both $h$ and $h^{-1}$ are well-defined by Lemma \ref{lem: remove edge from Srooted}.
\end{proof}  
\begin{proof}[Proof of Theorem \ref{thm:richman main result}]
By Theorem \ref{thm:main result},
\[
\frac{\det D[S]}{(-1)^{m-1} 2^{m-2}} = (m - 1)\kappaTS - \!\!\!\!\sum_{\SSrooted} \!\!\!\!\big(\bdeg{F_*} - 1\big)\big(\bdeg{F_*} - 4\big).
\]
Lemma \ref{lem: sum of bdeg} then yields
\begin{align*}
\frac{\det D[S]}{(-1)^{m-1} 2^{m-2}}&=
\frac{\det D[S]}{(-1)^{m-1} 2^{m-2}} + (n - m)\Srooted - \sum_{\SSrooted}\bdeg{F_*}\\
&=(m - 1 + n - m)\kappaTS - \!\!\!\!\sum_{\SSrooted} \!\!\!\!\big(\bdeg{F_*} +\big(\bdeg{F_*} - 1\big)\big(\bdeg{F_*} - 4\big)\big)\\
&=(n - 1)\kappaTS - \!\!\!\!\sum_{\SSrooted} \!\!\!\!\big( \bdeg{F_*} - 2\big)^2,
\end{align*}
as desired. 
\end{proof}

\section{
A corollary of Choudhury and Khare}
\label{sec: CK as coro}

Although stated in greater generality (by considering weighted graphs, and computing the whole characteristic polynomial of the matrix), an immediate corollary of 
\cite[Thm.~A]{CK19} is the following.
\begin{cor}[\cite{CK19}]
    Let $S$ be a subset of nodes of $T$ of cardinality $m \ge 3$. Assume that the induced subgraph of $T$ on $S$ is a tree. Then,
    \[
    \det D[S] = (-1)^{m-1}\cdot  (m-1) \cdot 2^{m-2}. 
    \]
\end{cor}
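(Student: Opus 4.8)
The plan is to feed the hypothesis directly into Theorem~\ref{thm:main result}: writing $T[S]$ for the induced subgraph of $T$ on $S$, I want to show that when $T[S]$ is a tree one has $\#\Srooted = 1$ and $\bdeg{F_*} = 1$ for every $F\in\SSrooted$. Granting these two facts, every summand $(\bdeg{F_*}-1)(\bdeg{F_*}-4)$ vanishes, and Theorem~\ref{thm:main result} collapses to $\det D[S] = (-1)^{m-1}2^{m-2}(m-1)$, which is exactly the claim.

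First I would establish $\#\Srooted=1$. The key observation is that in \emph{any} $S$-rooted or $(S,*)$-rooted spanning forest $F$, each edge $\{s,s'\}$ of $T[S]$ must be cut, since its endpoints $s$ and $s'$ lie in distinct components $F_s$ and $F_{s'}$ of $F$. An $S$-rooted forest has $n-m$ edges, hence is obtained from $T$ by deleting exactly $m-1$ edges; as $T[S]$ is a tree on $m$ vertices it contributes exactly $m-1$ edges of $T$, so the deleted set must be precisely $E(T[S])$. Conversely $F^0 := T\setminus E(T[S])$ is $S$-rooted: connectedness of $T[S]$ forces the (unique) $T$-path between any two vertices of $S$ to begin with an edge of $T[S]$, so no component of $F^0$ contains two vertices of $S$; since $F^0$ has exactly $m$ components, each contains exactly one. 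Thus $\Srooted=\{F^0\}$.

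Next I would show $\bdeg{F_*}=1$ for every $F\in\SSrooted$. Arguing as above, such an $F$ also cuts all of $E(T[S])$, and since it has $n-m-1$ edges it cuts exactly one further edge $e\notin E(T[S])$; hence $F = F^0\setminus\{e\}$ with $e\in E(F^0)$, and by Lemma~\ref{lem: remove edge from Srooted}(a) the edge $e$ is adjacent to the floating component $F_*$. Writing $e=\{u,v\}$ with $v\in F_*$, I claim $\partial F_* = \{u\}$: no vertex of $F_*$ lies in $S$, so no edge of $T[S]$ is incident to $F_*$; and since $F_*$ sits inside a single component $F^0_s$ of $F^0$, which is a tree, the only edge of $F^0$ (equivalently, of $T$) joining $F_*$ to the rest of $T$ is $e$ itself. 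Hence $\bdeg{F_*}=1$ and $(\bdeg{F_*}-1)(\bdeg{F_*}-4)=0$.

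Substituting $\#\Srooted=1$ and the vanishing of every summand into the formula of Theorem~\ref{thm:main result} then yields $\det D[S] = (-1)^{m-1}2^{m-2}(m-1)$. I expect the only delicate point to be the structural description in the third step --- recognizing that an $(S,*)$-rooted forest is always obtained by removing a single edge from $F^0$ and that the resulting floating component is pendant in $T$; the remaining ingredients are just short edge counts.
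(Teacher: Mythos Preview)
Your proof is correct and follows essentially the same approach as the paper: both apply Theorem~\ref{thm:main result} by arguing that $\#\Srooted=1$ and that every $(S,*)$-rooted forest has $\bdeg{F_*}=1$, so that each summand $(\bdeg{F_*}-1)(\bdeg{F_*}-4)$ vanishes. The paper phrases the hypothesis as ``$T$ is built from the subtree on $S$ by attaching pendant trees'' and simply asserts these two facts, whereas you supply the edge-count argument and invoke Lemma~\ref{lem: remove edge from Srooted} to justify them; the underlying reasoning is the same.
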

The hypothesis on $S$ can be rephrased as saying that $T$ is constructed from a tree $S$ by iteratively attaching pendant trees to some of the nodes. We give the first combinatorial account of this result.
\begin{proof}
    We take the second, more intuitive notion: identify $S$ with a tree and think of $T$ as a tree constructed from $S$ by iteratively attaching pendant trees to some of the nodes.
    There is only one $S$-rooted forest on $T$, namely the forest in which each components is an elements $s \in S$ together with all of the nodes of all of the pendant trees attached to $s$.
    In an $(S,*)$-rooted forest, the floating component must be one of the pendant trees; in particular, $\delta (F_*) = 1$ for all pendant trees. Now, the formula from Theorem \ref{thm:main result} gives
    \[
    \det D[S] =
    (-1)^{m-1} 2^{m-2}
    \Big(
    (m-1)\cdot 1 - \sum_{F \in \SSrooted} 0
    \Big). \qedhere
    \]
\end{proof}

\section{Closing remarks}
\label{sec: closing}

The present work perfectly illustrates how the combinatorial framework developed in \cite{BEGLR} is, at the same time, natural and powerful. Indeed, the setting of \cite{CK19} is the correct algebraic setting to study distance matrices of trees in all generality, and the results therein prove to be the shadow of beautiful network combinatorics.

We remark that this document can be further generalized in two directions. The first, towards computing non-principal minors of the matrix; the second towards computing weighted generalizations of the formula. Both directions require simple changes in the combinatorial model, followed by careful (and tedious) algebraic computations. We leave this for a future work, and choose to tell a simpler story here.

\section*{Acknowledgements}
The current version of this article was written to be presented at the British Combinatorial Conference 2024.
We thank Mercedes Rosas, Luis Esquivias, and Emmanuel Briand for helpful commentaries. We thank Harry Richman for making us aware of their preprint \cite{richmanminors} after the announcement of \cite{FPSAC}.

ÁG was funded by the
University of Bristol Research Training Support Grant. AL was partially supported by the Grant PID2020-117843GB-I00 funded by\break \mbox{MICIU/AEI/10.13039/501100011033}.

\end{document}